\newcommand{\bc}{\begin{center}}
\newcommand{\ec}{\end{center}}
\newcommand{\ba}{\begin{array}}
\newcommand{\ea}{\end{array}}
\newcommand{\be}{\begin{eqnarray}}
\newcommand{\ee}{\end{eqnarray}}
\newcommand{\bel}{\begin{eqnarray}\label}
\newcommand{\El}{\end{eqnarray}}
\newcommand{\bes}{\begin{eqnarray*}}
\newcommand{\Es}{\end{eqnarray*}}
\newcommand{\bn}{\begin{enumerate}}
\newcommand{\en}{\end{enumerate}}
\definecolor{MIT}{cmyk}{.24, 1.00, .78, .17} 
\definecolor{pink}{cmyk}{0, 1, 0, 0} 
\definecolor{darkgreen}{cmyk}{1,0, 1, 0} 
\newtheorem{lemma}{Lemma}
\newtheorem{proposition}{Proposition}
\newtheorem{theorem}{Theorem}
\newtheorem{corollary}{Corollary}
\newtheorem*{theorem*}{Theorem}
\newtheorem*{lemma*}{Lemma}
\newtheorem*{proposition*}{Proposition}
\newtheorem*{corollary*}{Corollary}
\newcommand{\flo}[1]{\lfloor #1 \rfloor} %floor
\newcommand{\ceil}[1]{\lceil #1 \rceil} %ceiling
\newcommand{\eps}{\varepsilon} %epsilon
\newcommand{\mb}[1]{\mathbb{#1}} %blackboard bold
\newcommand{\mc}[1]{\mathcal{#1}} %mathcal 
\newcommand{\abs}[1]{\left|#1\right|} %abs
\newcommand{\norm}[1]{\lVert#1\rVert} %norm
\newcommand{\hd}{ \mc{P}_\mc{H} } %Holder density notation
\newcommand{\sd}{ \mc{S} } %Sobolev density notation
\newcommand{\unif}{\hat f_S^{\mathsf{unif}}} %uniform weighted coreset
\newcommand{\Cara}{Carath\'{e}odory } %Caratheodory
\newcommand{\ti}{\tilde } %tilde 
\newcommand{\disc}{\mathsf{disc} } %discrepancy
\newcommand{\cbeta}{\lfloor \beta \rfloor}
\newcommand{\cC}{\mathcal{C}}
\newcommand{\cD}{\mathcal{D}}
\newcommand{\cE}{\mathcal{E}}
\newcommand{\cG}{\mathcal{G}}
\newcommand{\cN}{\mathcal{N}}
\newcommand{\cP}{\mathcal{P}}
\newcommand{\cV}{\mathcal{V}}
\newcommand{\cW}{\mathcal{V}}
\newcommand{\ud}{\mathrm{d}}
\newcommand{\h}{{\rm I}\kern-0.18em{\rm H}}
\newcommand{\K}{{\rm I}\kern-0.18em{\rm K}}
\newcommand{\p}{\mb{P}}
\newcommand{\E}{\mb{E}}
\newcommand{\Z}{{\rm Z}\kern-0.18em{\rm Z}}
\newcommand{\1}{{\rm 1}\kern-0.24em{\rm I}}
\newcommand{\N}{{\rm I}\kern-0.18em{\rm N}}
\begin{document}
	
	\begin{frontmatter}
		% "Title of the paper"
		\title{A Statistical Perspective on Coreset Density Estimation}
		\runtitle{Coresets}
		
		% indicate corresponding author with \corref{}
				    
		\author{ 
			\fnms{Paxton}
			\snm{Turner}\ead[label=paxton]{pax@mit.edu},
			\fnms{Jingbo}
			\snm{Liu}\ead[label=jingbo]{jingbol@illinois.edu},
			and
			\fnms{Philippe} \snm{Rigollet}
            \thanks{P.R. was supported by NSF awards IIS-1838071, DMS-1712596, DMS-1740751, and DMS-2022448.}
            \ead[label=rigollet]{rigollet@math.mit.edu}
		    }
		
		\affiliation{Massachusetts Institute of Technology and University of Illinois at Urbana-Champaign }%, 
%		\thankstext{t2}{PR was supported by NSF awards IIS-1838071, DMS-1712596, DMS-1740751, and DMS-2022448.
%		}
		%\thankstext{t4}{Supported in part by ANR grant ANR-13-JS01-0004-01.}
		
		%
		\address{{Paxton Turner}\\
			{Department of Mathematics} \\
			{Massachusetts Institute of Technology}\\
			{77 Massachusetts Avenue,}\\
			{Cambridge, MA 02139-4307, USA}\\
			\printead{paxton}
		}

		\address{{Jingbo Liu}\\
		    {Department of Statistics} \\
			{University of Illinois at Urbana-Champaign}\\
			{725 S. Wright St.,}\\
			{Champaign, IL 61820, USA} \\
			\printead{jingbo}
		}
		
		\address{{Philippe Rigollet}\\
			{Department of Mathematics} \\
			{Massachusetts Institute of Technology}\\
			{77 Massachusetts Avenue,}\\
			{Cambridge, MA 02139-4307, USA}\\
			\printead{rigollet}
		}

		%\author{\fnms{???} \snm{???}\ead[label=e1]{???}}
		%\address{\printead{e1}}
		%\and
		%\author{\fnms{???} \snm{???}\ead[label=e2]{???}}
		%\address{\printead{e2}}
		%\affiliation{???}
		
		\runauthor{Turner et al.}
		
		\begin{abstract}
	    Coresets have emerged as a powerful tool to summarize data by selecting a small subset of the original observations while retaining most of its information. This approach has led to significant computational speedups but the performance of statistical procedures run on coresets is largely unexplored. In this work, we develop a statistical framework to study coresets and focus on the canonical task of nonparameteric density estimation. Our contributions are twofold. First, we establish the minimax rate of estimation achievable by coreset-based estimators. Second, we show that the practical coreset kernel density estimators are near-minimax optimal over a large class of H\"{o}lder-smooth densities.
		\end{abstract}
		
		\begin{keyword}[class=AMS]
			\kwd[Primary ]{62G07}
			%\kwd{}
			\kwd[; secondary ]{68Q32}
		\end{keyword}
		\begin{keyword}[class=KWD]
		data summarization, kernel density estimator, Carath\'{e}odory's theorem, minimax risk, compression 
		\end{keyword}
		
	\end{frontmatter}

\section{Introduction}    
    
    The ever-growing size of datasets that are routinely collected has led practitioners across many fields to contemplate effective data summarization techniques that aim at reducing the size of the data while preserving the information that it contains. While there are many ways to achieve this goal, including standard data compression algorithms, they often prevent direct manipulation of data for learning purposes. \emph{Coresets} have emerged as a flexible and efficient set of techniques that permit direct data manipulation. Coresets are well-studied in machine learning~\citep{HarKus07,FelSchSoh13,BacLucKra17,BacLucKra18,KarLib19}, statistics~\citep{FelFauKra11,ZhePhi17,MunSchSoh18,HugCamBro16,PhiTai18,PhiTai19}, and computational geometry~\citep{AgaHarVar05, Cla10,FraSoh05,GarJag09, ClaGenSol20}. 
	
Given a dataset $\cD=\{X_1, \ldots, X_n\} \subset \mathbb{R}^d$ and task (density estimation, logistic regression, etc.) a coreset $\cC$ is given by $\cC=\{X_i\,:\, i \in S\}$ for some subset $S$ of $\{1, \ldots, n\}$ of size $|S|\ll n$. A good coreset should suffice to perform the task at hand with the same accuracy as with the whole dataset $\cD$.

In this work we study the canonical task of density estimation. Given i.i.d random variables  $X_1, \ldots, X_n \sim \p_f$ that admit a common density $f$ with respect to the Lebesgue measure over $\mathbb{R}^d$, the goal of density estimation is to estimate~$f$. It is well known that the minimax rate of estimation over the $L$-H\"{o}lder smooth densities $\mc{P}_{\mc{H}}(\beta, L)$ of order $\beta$ is given by
\begin{equation}
    \label{eqn:classical_risk}
     \inf_{ \hat{f} } \sup_{ f \in \mc{P}_{\mc{H}}(\beta, L)  }\mb{E}_f \, \norm{ \hat{f} - f }_2    = \Theta_{\beta, d, L}( n^{-\frac{\beta}{2 \beta + d} } )\,,
\end{equation}
where the infimum is taken over all estimators based on the dataset $\cD$. Moreover the minimax rate above is achieved by a kernel density estimator
\begin{equation}
\label{EQ:KDE}
\hat f_n(x):=\frac{1}{nh^d}\sum_{j=1}^n K\left(\frac{X_i-x}{h}\right)
\end{equation}
for suitable choices of kernel  $K: \mathbb{R}^d \to \mathbb{R}$ and bandwidth $h>0$  \citep[see e.g.][Theorem~1.2]{Tsy09}.

The main goal of this paper is to extend this understanding of rates for density estimation to estimators based on coresets. Specifically we would like to characterize the statistical performance of coresets in terms of their cardinality. To do so, we investigate two families of estimators built on coresets: one that is quite flexible and allows arbitrary estimators to be used on the coreset and another that is more structured  and driven by practical considerations; it consists of weighted kernel density estimators built on coresets.  

%
%To do so, we investigate two families of estimators built on coresets. The first family is quite flexible and allows the statistician to select a coreset and then employ an arbitrary estimator that only depends on the data points in the coreset to estimate an unknown density. The second family that we study

%One of our main goals is to extend this understanding of rates for density estimation to \textit{coreset} density estimators. A coreset is a data-dependent subset of the data used to build an estimator $\hat{f}_S$ that only looks at those data points selected by the subset.

\subsection{Two statistical frameworks for coreset density estimation} 

We formally define a coreset as follows. Throughout this work $m=o(n)$ denotes the cardinality of the coreset. Let $S = S(y|x)$ denote a conditional probability measure on $\binom{[n]}{m}$, where $x \in \mb{R}^{d \times n}$. 
In information theoretic language, $S$ is a channel from $\mb{R}^{d \times n}$ to subsets of cardinality $m$. 
We refer to the channel $S$ as a \textit{coreset scheme} because it designates a data-driven method of choosing a subset of data points. In what follows, we abuse notation and let $S = S(x)$ denote an instantiation of a sample from the measure $S(y|x)$ for $x \in \mb{R}^{d \times n}$. 
%We define the \textit{cardinality} of $S$ to be $|S| := m$.
A \textit{coreset} $X_S$ is then defined to be the projection of the dataset $X = (X_1, \ldots, X_n)$ onto the subset indicated by $S(X)$: $X_S := \{ X_i \}_{i \in S(X)}$.

The first family of estimators that we investigate is quite general and allows the statistician to select a coreset and then employ an estimator that only manipulates data points in the coreset to estimate an unknown density. To study coresets, it is convenient to make the dependence of estimators on observations more explicit than in the traditional literature. More specifically, a density estimator $\hat f$ based on $n$ observations $X_1, \ldots,X_n \in \mb{R}^d$  is a function $\hat f: \mb{R}^{d \times n} \to L_2(\mb{R}^d)$ denoted by $\hat f[X_1, \ldots, X_n](\cdot)$. Similarly, a \textit{coreset-based estimator} $\hat{f}_S$ is constructed from a coreset scheme $S$ of size $m$ and an estimator (measurable function) $\hat f: \mb{R}^{d \times m} \to L_2(\mb{R}^d)$ on $m$ observations. We enforce the additional restriction on $\hat f$ that for all $y_1, \ldots, y_m \in \mb{R}^d$ and for all bijections $\pi:[m] \to [m]$, it holds that $\hat{f}[y_1, \ldots, y_m](\cdot) = \hat{f}[y_{\pi(1)}, \ldots, y_{\pi(m)}](\cdot)$. Given $S$ and $\hat f$ as above, we define the \textit{coreset-based estimator} $\hat{f}_S: \mathbb{R}^{d \times n} \to L_2(\mb{R}^d)$ to be the function $\hat{f}_S[X]( \cdot ) := \hat{f}[X_S](\cdot): \mb{R}^d \to \mb{R}$. We evaluate the performance of coreset-based estimators in Section \ref{sec:coreset_based} by characterizing their rate of estimation over H\"{o}lder classes.\footnote{Our notion of coreset-based estimators bares conceptual similarity to various notions of \textit{compression schemes} as studied in the literature, e.g. \cite{LitWar86,AshBenHar20,HanKonSad19}.}

The symmetry restriction on $\hat f$ prevents the user from exploiting information about the ordering of data points to their advantage: the only information that can be used by the estimator $\hat f$ is contained in the unordered collection of distinct vectors given by the coreset $X_S$.

As evident from the the results in Section~\ref{sec:coreset_based}, the information-theoretically optimal coreset estimator does not resemble coreset estimators employed in practice. To remedy this limitation, we also study \textit{weighted coreset kernel density estimators} (KDEs) in Section~\ref{sec:Cara}. Here the statistician selects a kernel $k$, bandwidth parameter $h$, and a coreset $X_S$ of cardinality $m$ as defined above and then employs the estimator
\[
\hat f_S(y) = \sum_{j \in S} \lambda_j h^{-d} k\left( \frac{X_j - y}{h} \right),
\]
where the weights $\{ \lambda_j \}_{j \in S}$ are nonnegative, sum to one and are allowed to depend on the full dataset.

In the case of uniform weights where $\lambda_j = \frac{1}{m}$ for all $j \in S$, coreset KDEs are well-studied \citep[see e.g.][]{BacLacObo12,HarSam14,PhiTai18,PhiTai19,KarLib19}. Interestingly, our results show that allowing flexibility in the weights gives a definitive advantage for the task of density estimation. By Theorems \ref{thm:main_Caratheodory} and \ref{thm:kde_lbd}, the uniformly weighted coreset KDEs require a much larger coreset than that of weighted coreset KDEs to attain the minimax rate of estimation over univariate Lipschitz densities.

%there is a polynomial factor $n^{1/3}$ separation between the cardinality of the coresets in the weighted and uniformly weighted

%Coreset KDEs are well-studied in the case of uniform weights ($\lambda_j = \frac{1}{m}$ for all $j \in S$) \citep{BacLacObo12,HarSam14,PhiTai18,PhiTai18b,KarLib19} 

% Somewhat subtly, it is not clear a priori that weighted coreset KDEs are less powerful than coreset-based estimators. Coreset KDEs are more structured than coreset-based estimators, but on the other hand, the weights are allowed to use information from the entire dataset. Our lower bounds rigorously demonstrate that when the weights are bounded, weighted coreset KDEs are no more powerful than coreset-based estimators (see Theorems \ref{thm:coreset_rate} and \ref{thm:weight_kde_lbd}).

%It turns out that in the case of bounded weights, we show in Theorem \ref{thm:weight_kde_lbd} that this flexibility in the weights is not an essential difference.  

\subsection{Setup and Notation}

We reserve the notation $\norm{ \cdot }_2$ for the $L_2$ norm
%, $\norm{\cdot}_{\ell_2}$ for the $\ell_2$ norm, 
and $\abs{\cdot}_p$ for the $\ell_p$-norm. The constants $c, c_{\beta, d}, c_{L},$ etc. vary from line to line and the subscripts indicate parameter dependences. 

Fix an integer $d \ge 1$. For any  multi-index $s = (s_1,\ldots, s_d) \in \mathbb{Z}_{\geq 0}^d$ and $x =(x_1, \ldots, x_d) \in \mathbb{R}^d$, define $s!=s_1!\cdots s_d!$, $x^s=x_1^{s_1}\cdots x_d^{s_d}$ and let  $D^s$ denote the differential operator defined by
$$
D^s=\frac{\partial^{|s|_1}}{\partial x_1^{s_1} \cdots \partial x_d^{s_d}}\,.
$$
Fix a positive real number $\beta,$ and let $\cbeta$ denote the maximal integer \textit{strictly} less than $\beta$. Given a multi-index $s$, the notation $\abs{s}$ signifies the coordinate-wise application of $\abs{\cdot}$ to $s$.  

Given $L>0$ we let  $\mc{H}(\beta, L)$ denote the space of H\"{o}lder functions $f:\mb{R}^d \to \mb{R}$ that are supported on the cube $[-1/2,1/2]^d$, are $\cbeta$ times differentiable, and satisfy
\begin{equation*}
|D^s f(x) - D^s f(y)| \leq L \abs{x - y}_2^{\beta - \cbeta}\,,\quad 
\end{equation*}
for all $x, y \in \mb{R}^d$ and for all multi-indices $s$ such that $\abs{s}_1 = \flo{\beta}$. 

Let $\mc{P}_{\mc{H}}(\beta, L)$ denote the set of probability density functions contained in $\mc{H}(\beta,L)$. For $f \in \mc{P}_{\mc{H}}(\beta,L)$, let $\mb{P}_f$ (resp. $\E_f$) denote the probability distribution (resp. expectation) associated to $f$.

For $d \geq 1$ and $\gamma \in \mb{Z}_{\geq 0}$, we also define the Sobolev functions $\sd(\gamma, L')$ that consist of all $f: \mb{R}^d \to \mb{R}$ that are $\gamma$ times differentiable and satisfy
\begin{equation*}
\norm{D^\alpha f}_2 \leq L'
\end{equation*} 
for all multi-indices $\alpha$ such that $\abs{\alpha}_1 = \gamma$.

%Given $f \in L_2$, we define the Fourier transform $\mc{F}[f]: \mb{R}^d \to \mb{R}$ by
%\[
%\mc{F}[f](\omega) = \frac{1}{2\pi}  \int_{\mb{R}^d} f(x) e^{i \langle x , \omega \rangle} \ud x. 
%\]

\section{Coreset-based estimators}
\label{sec:coreset_based}

In this section we study the performance of coreset-based estimators. Recall that coreset-based estimators are estimators that only depend on the data points in the coreset. 

Define the \textit{minimax risk for coreset-based estimators} $ \psi_{n,m}(\beta,L)$ over $\cP_{\mc{H}}(\beta, L)$ to be 
\begin{equation}
    \label{eqn:coreset_risk}
    \psi_{n,m}(\beta,L) = \inf_{ \hat{f}, |S| = m } \sup_{ f \in \mc{P}_{\mc{H}}(\beta,L)} \mb{E}_{f} \, \norm{\hat{f}_S - f}_2,
\end{equation}
where the infimum above is over all choices of coreset scheme $S$ of cardinality $m$ and all estimators $\hat{f}:\mb{R}^{d \times m} \to L_2(\mb{R}^d)$. 

Our main result on coreset-based estimators characterizes their minimax risk.

\begin{theorem}
    \label{thm:coreset_rate}
Fix $\beta, L>0$ and an integer $d \geq 1$. Assume that $m = o(n)$. Then the minimax risk of coreset-based estimators satisfies
\begin{equation*}
        \label{eqn:coreset_rate}
        \inf_{ \hat{f}, |S| = m } \, \, \sup_{ f \in \mc{P}_{\mc{H}}(\beta, L)} \mb{E}_{f} \, \norm{\hat{f}_S - f}_2 = \\ \Theta_{\beta, d, L}(  n^{-\frac{\beta}{2\beta + d}} + (m \log{n})^{-\frac{\beta}{d}}   ).  
\end{equation*}
\end{theorem}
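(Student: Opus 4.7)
The minimax rate exhibits a phase transition at $m \log n \asymp n^{d/(2\beta+d)}$, with the classical rate $n^{-\beta/(2\beta+d)}$ prevailing above this threshold and the coreset-specific term $(m \log n)^{-\beta/d}$ prevailing below. The plan is to establish matching upper and lower bounds.

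For the upper bound I would construct an explicit coreset scheme and symmetric estimator. The idea is to use the choice of coreset to encode a histogram of the empirical distribution of $\cD$. Fix $h = c \max\{ n^{-1/(2\beta+d)}, (m \log n)^{-1/d}\}$, partition $[-1/2,1/2]^d$ into cubical cells of side length $h$, and assign $m_j \approx m \hat p_j$ coreset points to cell $B_j$, where $\hat p_j = |\cD \cap B_j|/n$, rounding so that $\sum_j m_j = m$. Within each cell, take any $m_j$ samples from $\cD \cap B_j$. The estimator is a kernel density estimator on $X_S$ with bandwidth $h$ and a smooth kernel of order $\lfloor \beta \rfloor$. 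The $L_2$ risk decomposes into (i) a H\"older smoothing bias $O(h^\beta)$, (ii) a sampling error from concentration of $\hat p_j$ around $p_j$, and (iii) an encoding error from integer rounding of the cell counts. Balancing these at the two choices of $h$ yields the claimed rate in each regime.

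For the lower bound, the first term $n^{-\beta/(2\beta+d)}$ is inherited from \eqref{eqn:classical_risk} since coreset-based estimators form a subclass of all estimators on $\cD$. The coreset-specific term $(m \log n)^{-\beta/d}$ requires a dedicated argument, for which I would use a Fano construction of bump densities. Set $h = c (m \log n)^{-1/d}$, partition the cube into $N = h^{-d}$ cells, and define $f_\theta = f_0 + \sum_{i=1}^N \theta_i L h^\beta \phi_i \in \mc{P}_\mc{H}(\beta, L)$, indexed by $\theta \in \{-1,+1\}^N$, where $\phi_i$ is a smooth zero-mean bump supported in cell $i$ and $f_0$ is a smooth reference density in the class, bounded away from $0$. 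Gilbert--Varshamov extracts a packing $\Theta$ with $\log|\Theta| \gtrsim N$ and pairwise $L_2$-separation $\gtrsim h^\beta$, so that Fano reduces the task to showing $I(\theta; X_S) \lesssim m \log n$.

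To bound $I(\theta; X_S)$, I decompose $I(\theta; X_S) \leq I(\theta; S) + I(\theta; X_S \mid S)$. The first term is at most $H(S) \leq \log\binom{n}{m} \leq m \log(en/m)$. For the second, although $S$ depends on $X$ so that the law of $X_S$ given $S$ is not i.i.d., I would argue $I(\theta; X_S \mid S) \leq m \cdot I(\theta; X_1)$ via exchangeability: introduce an auxiliary random permutation of $[n]$, rewrite the coreset as the first $m$ coordinates under that permutation, and apply the chain rule for mutual information. A standard chi-squared calculation on the bump perturbation gives $I(\theta; X_1) = O(h^{2\beta}) = o(\log n)$, so altogether $I(\theta; X_S) \lesssim m \log n$, and Fano yields an $L_2$ lower bound of order $h^\beta = (m \log n)^{-\beta/d}$. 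The central technical obstacle is making this conditioning-and-exchangeability argument rigorous for arbitrary coreset schemes $S$.
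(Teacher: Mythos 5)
Your proposal identifies the right phase transition and the right Fano target, but both halves have real gaps.

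\textbf{Upper bound.} Your histogram-based construction is a \emph{uniformly weighted} coreset KDE, and such estimators provably cannot achieve the rate claimed in Theorem~\ref{thm:coreset_rate}; indeed this is the content of Theorem~\ref{thm:kde_lbd_allbeta} of the paper. Concretely, in the small-coreset regime you set $h = c(m\log n)^{-1/d}$, so there are $N = h^{-d} = m\log n$ cells but only $m$ coreset points. Most cells then receive $m_j=0$, and a direct computation gives $\|\hat g_S\|_2^2 \asymp h^{-d}/m = \log n$, so the estimator's $L_2$ norm diverges and the rounding error alone is of order $\sqrt{\log n}$, not $(m\log n)^{-\beta/d}$. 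From an information-theoretic angle the same problem appears: a count vector $(m_1,\dots,m_N)$ with $\sum m_j = m$ and $N = m\log n$ cells carries only $O(m\log\log n)$ bits, far short of the $\Theta(m\log n)$ bits needed to index an $\eps$-net of $\mc{P}_\mc{H}(\beta,L)$ at scale $\eps = (m\log n)^{-\beta/d}$. The paper's construction extracts the needed entropy differently: it bins only the first coordinate into $K \asymp n/\log n$ bins, picks \emph{which} $m$ of those $K$ bins the coreset occupies, and uses $\log\binom{K}{m} \asymp m\log n$ bits to address an $\eps$-net element $\mathsf{f}$ near the full-data KDE $\hat f_n$. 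The estimator then just \emph{decodes} the occupied bins and outputs $\mathsf{f}$ --- it is not a KDE at all. Your approach, even correctly analyzed, can reach only the i.i.d.-subsample rate $m^{-\beta/(2\beta+d)}$, not the encoding rate.

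\textbf{Lower bound.} The decomposition $I(\theta;X_S) \le I(\theta;S) + I(\theta;X_S\mid S)$ is valid and $I(\theta;S)\le \log\binom{n}{m}$ is fine, but the proposed bound $I(\theta;X_S\mid S) \le m\,I(\theta;X_1)$ is the crux and does not follow from exchangeability as sketched. The difficulty is that $S$ is a measurable function of $X$, so conditioning on $\{S = s\}$ changes the conditional law of $X_s$: for instance, if $S$ selects the indices of extreme observations, then conditional on $S=s$ the vector $X_s$ is nothing like $m$ i.i.d. draws, and can carry far more information about $\theta$ than $m\,I(\theta;X_1)$. Introducing an auxiliary random permutation does not disentangle this --- the permutation that sends $S$ to $[m]$ is itself determined by $X$. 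The paper sidesteps conditioning entirely: by a union bound over all $\binom{n}{m}$ possible subsets, the \emph{marginal} density of $X_S$ satisfies $f_{X_S} \le \binom{n}{m}\, f_\omega^{\otimes m}$ pointwise (Lemma~\ref{lem:KL_bound}), so the entropy term in the KL to the uniform reference is at most $\log\binom{n}{m} + m\log\|f_\omega\|_\infty \lesssim m\log n$. This is the key trick your argument is missing, and it handles arbitrary data-dependent coreset schemes cleanly without any distributional claim about $X_S$ given $S$.
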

%%%%NOTE: multline*

The above theorem readily yields a characterization of the minimal size  $m^*(\beta,d)$ that a coreset can have while still enjoying the minimax optimal rate $n^{-\frac{\beta}{2\beta + d}}$ from~\eqref{eqn:classical_risk}. More specifically, let $m^*=m^*(n)$ be such that 
\begin{itemize}
\item[(i)] if $m(n)$ is a sequence such that $m =o( m^*)$, then  $\liminf_{n \to \infty}n^{\frac{\beta}{2\beta + d}}\psi_{n,m}(\beta)= \infty$, and 
\item[(ii)] if $m =\Omega( m^*)$ then $\limsup_{n \to \infty}\psi_{n,m}(\beta)n^{\frac{\beta}{2\beta + d}}\le C_{\beta, d, L}$ for some constant $C_{\beta, d, L}>0$. 
\end{itemize}
Then it follows readily from from Theorem~\ref{thm:coreset_rate} that $m^*= \Theta_{\beta, d, L}( n^{\frac{d}{2 \beta + d}}/\log n)$.

Theorem \ref{thm:coreset_rate} illustrates two different curses of dimensionality: the first stems from the original estimation problem, and the second stems from the compression problem. As $d \to \infty$, it holds that $m^* \sim n/\log n$, and in this regime there is essentially no compression, as the implicit constant in Theorem \ref{thm:coreset_rate} grows rapidly with $d$.\footnote{ In fact, even for the classical estimation problem \eqref{eqn:classical_risk}, this constant scales as $d^d$ \citep[see][Theorem 3]{McD17}.}

Our proof of the lower bound in Theorem \ref{thm:coreset_rate} first uses a standard reduction from estimation to multiple hypothesis testing problem over a finite function class. While Fano's inequality is the workhorse of our second step, note that the lower bound must hold only for coreset estimators and not \emph{any} estimator as in standard minimax lower bounds. This additional difficulty is overcome by a careful handling of the information structure generated by coreset scheme channels rather than using off-the-shelf results for minimax lower bounds. The full details of the lower bound are in the Appendix.

The estimator achieving the rate in Theorem \eqref{thm:coreset_rate} relies on an encoding procedure. It is constructed by building a dictionary between the subsets in $\binom{[n]}{m}$ and an $\varepsilon$-net on the space of H\"{o}lder functions. The key idea is that, for $\omega(1) = m \leq n/2$, the amount of subsets of size $m$ is extremely large, so for $m$ large enough, there is enough information to encode a nearby-neighbor in $L_2(\mathbb{R}^d)$ to the kernel density estimator on the entire dataset. 

\subsection{Proof of the upper bound in Theorem \ref{thm:coreset_rate}}

Fix $\eps = c^*(m \log n)^{-\frac{\beta}{d}}$ for $c^*$ to be determined and let  $\cN_\eps$ denote an $\eps$-net of $\mc{P}_{\mc{H}}(\beta, L)$ with respect to the $L_2([-\frac{1}{2},\frac12]^d)$ norm. It follows from the classical Kolmogorov-Tikhomorov bound~\citep[see, e.g., Theorem XIV of][]{Tik93} that there exists a constant $C_{\mathsf{KT}}(\beta, d, L)>0$ such that we can choose $\cN_\eps$ with $\log |\cN_\eps|\le C_{\mathsf{KT}}(\beta, d, L)\,\eps^{-d/\beta}$. In particular, there exists $\mathsf{f} \in \cN_\eps$ such that $\|\hat f_n - \mathsf{f}\|_{L^2([-1/2,1/2]^d)}\le \eps$ where $\hat f_n$ is the minimax optimal kernel density estimator defined in~\eqref{EQ:KDE}.

We now develop our encoding procedure for $\mathsf{f}$. To that end, fix an integer $K \ge m$ such that $\binom{K}{m}\ge |\cN_\eps|$ and let $\phi : \binom{[K]}{m} \to 
\cN_\eps$ be any surjective map. Our procedure only looks at the first coordinates of the sample $X = \{X_1, \ldots, X_n\}$. Denote these coordinates by $x=\{x_1, \ldots, x_n\}$ and note that these $n$ numbers are almost surely distinct. Let $A$ denote a parameter to be determined, and define the intervals
\begin{equation*}
%    \label{eqn:Bik}
    B_{ik} = [ (i - 1)K^{-1}A + (k -1)A , \\ \quad (i - 1)K^{-1}A + (k -1)A + K^{-1}A ]. 
\end{equation*}
%%%NOTE: multline*

For $i = 1, \ldots, K$, define 
\begin{equation*}
%    \label{eqn:new_bins}
    B_i = \bigcup_{k =1}^{1/A} B_{ik} . 
\end{equation*}

%We set the number of bins to be $K = cn/\log n$. Hence, using a similar encoding procedure to Section \ref{sec:coreset_upperbound}, it suffices to show that there is a data point in every bin with high probability. 
%
%This is because by choosing whether or not to use a data point from a bin, we can encode $\binom{n/\log n}{m}$ different configurations.

The next lemma, whose proof is in the Appendix, ensures that with high probability every bin $B_i$ contains the first coordinate $x_i$ of at least one data point.

\begin{lemma}
\label{lem:good_bins}    
    Let $K^{-1} = c(\log n)/n$ for $c>0$ a sufficiently large absolute constant, and let $A = A_{\beta, L,K}$ denote a sufficiently small constant. Then for all $f \in \mc{P}_{\mc{H}}(\beta, L)$ and $X_1, \ldots, X_n \stackrel{iid}{\sim} \mb{P}_f$, the event that for every $j = 1, \ldots, K$ there exists some $x_{i}$ in bin $B_j$ holds with probability at least $1 - O(n^{-2})$.
\end{lemma}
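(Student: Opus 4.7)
The plan is a standard union-bound argument whose only non-routine ingredient is a uniform lower bound on $\p_f(X_i^{(1)} \in B_j)$ that exploits the regular ``comb'' structure of $B_j$. After a suitable translation so that the support of $f$ sits inside $[0,1]^d$, denote by $f_1$ the marginal density of the first coordinate $X_i^{(1)}$; then $f_1$ inherits $(\beta\wedge 1)$-H\"older continuity from $f$ with a constant $L_1 = L_1(\beta, L, d)$, and $\int f_1 = 1$.

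The key step is to establish
\[
p_j := \p_f\bigl(X_i^{(1)} \in B_j\bigr) = \int_{B_j} f_1(x)\,\ud x \;\ge\; \tfrac{1}{2}\,K^{-1}, \qquad j=1,\dots,K.
\]
By construction $B_j = \bigcup_{k=1}^{1/A} B_{j,k}$ is a union of $1/A$ sub-intervals of length $K^{-1}A$ whose midpoints $c_{j,k}$ are equally spaced at distance $A$ and form a translated copy of the grid $\{(k-\tfrac12)A\}_{k=1}^{1/A}$. A midpoint-rule estimate on each $B_{j,k}$, together with the H\"older modulus of $f_1$, yields
\[
\int_{B_j} f_1 \;=\; K^{-1} \sum_{k=1}^{1/A} A\, f_1(c_{j,k}) \;+\; O\bigl(K^{-1-(\beta\wedge 1)} A^{\beta\wedge 1}\bigr),
\]
while the same H\"older estimate applied to the midpoint Riemann sum on $[0,1]$ gives
\[
\sum_{k=1}^{1/A} A\, f_1(c_{j,k}) \;=\; \int_0^1 f_1 + O(A^{\beta\wedge 1}) \;=\; 1 + O(A^{\beta\wedge 1}),
\]
with constants depending only on $\beta,L,d$. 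Choosing $A = A_{\beta,L,d}$ small enough forces the total error below $\tfrac12 K^{-1}$, yielding $p_j \ge K^{-1}/2$ uniformly in $j$.

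Given this bound and i.i.d.\ sampling, for each $j$,
\[
\p\bigl(\text{no } X_i^{(1)} \in B_j\bigr) \;\le\; (1-p_j)^n \;\le\; \exp(-n p_j) \;\le\; \exp\bigl(-\tfrac{c}{2}\log n\bigr) \;=\; n^{-c/2},
\]
using $n p_j \ge (c/2)\log n$. A union bound over the $K \le n$ bins then yields
\[
\p\bigl(\exists\, j:\, B_j \text{ contains no } x_i\bigr) \;\le\; K\cdot n^{-c/2} \;\le\; n^{1-c/2} \;=\; O(n^{-2}),
\]
provided the absolute constant $c$ is chosen large enough (e.g.\ $c \ge 6$). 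The main technical point is the uniform lower bound on $p_j$: $f_1$ is an arbitrary H\"older density and could concentrate away from any given interval, but the uniform spacing of the comb at scale $A$ forces $\int_{B_j} f_1$ to track $K^{-1}\int_0^1 f_1 = K^{-1}$ up to a $\beta$-dependent multiplicative error that is controlled by taking $A$ small as a function of $\beta, L, d$.
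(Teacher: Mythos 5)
Your proposal is correct and follows the same overall strategy as the paper: use the H\"older modulus of the univariate marginal $f_1$ at the sub-bin scale $K^{-1}A$ to force $\mb{P}(B_j) \ge \tfrac12 K^{-1}$ uniformly in $j$, then finish with an exponential tail bound and a union bound over the $K \le n$ combs. The only difference is in the route to the key estimate: you compute $\mb{P}(B_j)$ directly via a two-scale quadrature (midpoint rule on each sub-bin $B_{jk}$, then a Riemann-sum approximation of $\int f_1$ at scale $A$), whereas the paper instead compares $\mb{P}(B_i)$ and $\mb{P}(B_j)$ pairwise by translating sub-bins by at most $A$ and uses the normalization $\sum_i \mb{P}(B_i)=1$ to deduce near-equipartition, avoiding the explicit Riemann sum; the H\"older input and the final union bound are identical in both.
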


In the high-probability event $\cE$ that every bin $B_i$ contains the first coordinate of some data point, choose a unique representative $x_j^\circ \in x$ such that $x_j^\circ\in B_j$ and pick any $T_{\mathsf{f}}\in \phi^{-1}(\mathsf{f})$. Then define $S=\{i\,:\, x_i=x^\circ_j, j \in T_{\mathsf{f}}\}$. If there exists a bin with no observation, then let $X_S$ consist of two data points lying in the same bin and $m - 2$ random data points. Then set $\hat f_S \equiv 0$. 

Note that $\hat f_S$ is indeed a coreset-based estimator. The function $\hat f$ such that $\hat f_S = \hat f[X_S]$ looks at the $m$ data points in the coreset, and if their first coordinates lie in distinct bins, then $X_S$ is decoded as above to output the corresponding element $\mathsf{f}$ of the net $\mc{N}_\eps$. Otherwise, $\hat f \equiv 0$. 

Next, it suffices to show the upper bound of Theorem \ref{thm:coreset_rate} in the case when $m \leq c n^{d/(2\beta+ d)}$ for $c$ a sufficiently small absolute constant. For $c^* = c^*_{\beta, d, L}$ sufficiently large, by Stirling's formula and our choice of $K$ it holds that
\begin{equation*}
    \label{eqn:Stirling}
     \log \binom{K}{m} 
     \geq   C_{\mathsf{KT}}(\beta, d, L) \, \left( \frac{1}{\eps} \right)^{\frac{d}{\beta}} \ge \log |\cN_\eps|.
\end{equation*}
Hence, the surjection $\phi$ and our encoding estimator $\hat f_S$ are well-defined.

Next
% observe that by~\eqref{eqn:classical_risk}, 
we have
$$
 \mb{E}_f \norm{\hat f_S -f }_2=  \mb{E}_f\big[ \norm{\mathsf{f} -f }_2 \1_{\cE}\big]+  \mb{E}_f\big[ \norm{0  -f }_2 \1_{\cE^c}\big].
$$
We control the first term as follows using~\eqref{eqn:classical_risk} and the fact that $\norm{\mathsf{f} - \hat f_n}_2\le \eps$ on $\cE$:
\begin{align*}
\mb{E}_f\big[ \norm{\mathsf{f} -f }_2 \1_{\cE}\big] &\le \mb{E}_f\norm{\hat f_n - f}_2+\mb{E}_f\norm{\mathsf{f} - \hat f_n}_2   \\
&\le c_{\beta, d, L} \,\big(n^\frac{-\beta}{2\beta +d}+(m \log n)^{-\frac{\beta}{d}}\big).
\end{align*} 

By the Cauchy-Schwarz inequality, 
\begin{align*}
\mb{E}_f\big[ \norm{0-f }_2 \1_{\cE^c}\big] &\le \big(\mb{E}_f \norm{f }_2^2 \, \p(\cE^c)\big)^{1/2} \\ 
%&\le  c_{\beta, d, L} \, \big(m^{-\frac{2\beta}{2\beta+d}}n^{-2}\big)^{1/2} \\
& \le c_{\beta, d, L} \, n^{-1}\,.
\end{align*}

Put together, the previous three displays yield the upper bound of Theorem \ref{thm:coreset_rate}.

\section{Coreset kernel density estimators}
\label{sec:Cara}
%introduce the Caratheodory technique

In this section, we consider the family of weighted kernel density estimators built on coresets and study its rate of estimation over the H\"{o}lder densities. In this framework, the statistician first computes a minimax estimator $\hat f$ using the entire dataset and then approximates $\hat f$ with a weighted kernel density estimator over the coreset. Here we allow the weights to be a measurable function over the entire dataset rather than just the coreset.  

As is typical in density estimation, we consider kernels $k: \mb{R}^d \to \mb{R}$ of the form $k(x) = \prod_{i = 1}^d \kappa(x_i)$ where $\kappa$ is an even function and $\int \kappa(x) \, \ud x = 1$. Given bandwidth parameter $h$, we define $k_h(x) = h^{-d} \, k(\frac{x}{h})$.

\subsection{Carath\'{e}odory coreset method}
\label{sec:Cara_construction}
Given a KDE with uniform weights and bandwidth $h$ defined by
    \[
    \hat f(y) = \frac{1}{n} \sum_{j = 1}^n k_h(X_j - y),
    \]
on a sample $X_1, \ldots, X_n$, we define a coreset KDE $\hat g_S$ as follows in terms of a cutoff frequency $T>0$. Define $A = \{ \omega \in \frac{\pi}{2}\mb{Z}^d:\, \abs{\omega}_\infty \leq T \}$. Consider the complex vectors $(e^{i \langle X_j, \omega \rangle} )_{\omega \in A}$. By Carath\'{e}odory's theorem \citep{Car07}, there exists a subset $S \subset [n]$ of cardinality at most $2(1 + \frac{4T}{\pi})^d + 1$ and nonnegative weights $\{ \lambda_j \}_{j \in S}$ with $\sum_{j \in S} \lambda_j = 1$ such that

\begin{equation}
\label{eqn:Cara_condition}
\frac{1}{n} \sum_{j = 1}^n (e^{i \langle X_j,  \omega \rangle})_{\omega \in A} = \sum_{j \in S} \lambda_j (e^{i \langle X_j, \omega \rangle})_{\omega \in A}. 
\end{equation}
Then $\hat g_S(y)$ is defined to be
\begin{equation*}
\hat g_S(y) = \sum_{j \in S} \lambda_j k_h(X_j - y). 
\end{equation*}

% $( \mc{F}[\hat f](\omega) )_{\omega \in S}$. Then 
% \[
%  ( \mc{F}[\hat f](\omega) )_{\omega \in S} = \frac{1}{n} \sum_{j = 1}^n v_j
% \]

\subsubsection{Algorithmic considerations} 
For a convex polyhedron $P$ with vertices $v_1, \ldots, v_n \in \mb{R}^D$, the proof of Carath\'{e}odory's theorem is constructive and yields a polynomial-time algorithm in $n$ and $D$ to find a convex combination of $D+1$ vertices that represents a given point in $P$ \citep{Car07} \citep[see also ][Theorem 1.3.6]{HirLem04}. For completeness, we describe below this algorithm applied to our problem.  Note that, more generally, for a large class of convex bodies, Carath\'{e}odory's theorem may be implemented efficiently using standard tools from convex optimization \citep[][Chapter 6]{GroLovSch12}. 

Set $D = 2|A| \leq 2(1 + \frac{4T}{\pi})^d$. For $j = 1, \ldots, n$, let 
\[
v_j = ( \mathsf{Re} \, e^{i\langle X_j, \omega \rangle}, \mathsf{Im} \, e^{i\langle X_j, \omega \rangle} )_{\omega \in A} \in \mb{R}^{D}. 
\]
Let $M$ denote the matrix with columns $(v_1, 1)^T, \ldots, (v_n,1)^T \in \mb{R}^{D+1}$, and let $\Delta_{n-1} \subset \mb{R}^n$ denote the standard simplex. Assume without loss of generality that $n \geq D + 2$. Next, 

%Set $D = |A| \leq 2(1 + \frac{4T}{\pi})^d$. Assuming without loss of generality that $n > D + 1$, this algorithm applied to our problem is as follows. Let $M$ denote the matrix with columns $(v_1, 1)^T, \ldots, (v_n,1)^T \in \mb{R}^{D+1}$, and let $\Delta_{n-1} \subset \mb{R}^n$ denote the standard simplex. 
\begin{enumerate}
\item Find a nonzero vector $w \in \text{ker}(M)$
\item Find $\alpha>0$ so that $\lambda_1 := \frac{1}{n} \1 + \alpha w$ lies on the boundary of $\Delta_{n-1}$
\end{enumerate} 
Observe that $M \lambda_1 = ( \frac{1}{n} \sum v_i, 1)^T$, and since $\lambda_1 \in \partial \Delta_{n-1}$ the average is now represented using a convex combination of at most $n - 1$ of the vertices $v_1, \ldots, v_n$. As long as at least $D+2$ vertices remain, we can continue reducing the number of vertices used to represent $\frac{1}{n} \sum v_j$ by applying steps 1 and 2. Thus after at most $n - D-1$ iterations, we obtain $\lambda \in \Delta_D$  that satisfies $\sum \lambda_j v_j = \frac{1}{n} \sum v_i$, as desired. 

%This yields the guarantee of Carathe\'{e}odory's theorem. 

%We can continue applying steps 1 and 2 as long as there are at least $D+2$ vertices remaining, because then the corresponding matrix $M$ has nontrivial kernel. 

%Steps 1 and 2 can be applied to the remaining vertices at each iteration to yield $\lambda_2, \lambda_3, ..., \lambda_N$, where $\lambda_N$ has at most $D + 1$ nonzero entries. 

\subsection{Results on Carath\'{e}odory coresets} 
\label{sec:Cara_results}
Proposition \ref{prop:Caratheodory_prop} is key to our results and specifies conditions on the kernel guaranteeing that the Carath\'{e}odory method yields an accurate estimator. 

\begin{proposition}
\label{prop:Caratheodory_prop}
Let $k(x) = \prod_{i = 1}^d \kappa(x_i)$ denote a kernel with $\kappa \in \sd(\gamma, L')$ such that $\abs{\kappa(x)} \leq c_{\beta,d} \abs{x}^{-\nu}$ for some $\nu \geq \beta + d$, and the KDE 
\[
\hat f(y) = \frac{1}{n} \sum_{i = 1}^n k_h(X_i - y)
\] with bandwidth $h = n^{-\frac{1}{2\beta + d}}$ satisfies
\begin{equation}
\label{eqn:minimax_condition}
\sup_{f \in \hd(\beta, L)} \mb{E} \norm{f - \hat f}_2 \leq c_{\beta, d, L}  \, n^{-\frac{\beta}{2\beta + d}}.
\end{equation}
Then the Carath\'{e}odory coreset estimator $\hat g_S(y)$ constructed from $\hat f$ with $T = c_{d, \gamma,L' } \, n^{\frac{d/2 + \beta + \gamma}{\gamma(2\beta + d)}}$ satisfies
\begin{equation*}
%\label{eqn:Cara_bound}
	\sup_{f \in \hd(\beta, L)} \mb{E} \norm{\hat g_S - f}_2 \leq c_{\beta, d,L} \, n^{-\frac{\beta}{2\beta + d}}.
\end{equation*}
\end{proposition}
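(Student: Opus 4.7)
The natural first step is the triangle inequality
\(\|\hat g_S-f\|_2\le\|\hat f-f\|_2+\|\hat g_S-\hat f\|_2\): the first summand is controlled by \eqref{eqn:minimax_condition}, so (after taking expectation) the work reduces to the deterministic bound \(\|\hat g_S-\hat f\|_2\lesssim_{\beta,d,L}n^{-\beta/(2\beta+d)}\) valid for every realization of the data. Introduce the signed measure
\(\mu:=\sum_{j\in S}\lambda_j\delta_{X_j}-\frac{1}{n}\sum_{j=1}^n\delta_{X_j}\), supported on \([-\tfrac12,\tfrac12]^d\) with total variation at most \(2\), so that \(\hat g_S-\hat f=k_h*\mu\). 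The Carath\'eodory identity \eqref{eqn:Cara_condition} is precisely the statement that \(\hat\mu(\omega)=0\) for every \(\omega\in A\), and by linearity \(\int p\,d\mu=0\) for every trigonometric polynomial \(p\) with frequencies in \(A\).

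The main idea is to compare \(k_h\) to its ``lattice truncation''. Define
\[
k_h^A(z):=\frac{1}{4^d}\sum_{\omega\in A}\hat k(h\omega)\,e^{i\langle\omega,z\rangle},
\qquad
k_h^{\mathrm{per}}(z):=\sum_{n\in\mathbb{Z}^d}k_h(z+4n),
\]
the first a trigonometric polynomial with frequencies in \(A\), the second the \(4\mathbb{Z}^d\)-periodization of \(k_h\), whose Fourier series on \([-2,2]^d\) is \(4^{-d}\sum_{\omega\in(\pi/2)\mathbb{Z}^d}\hat k(h\omega)e^{i\langle\omega,z\rangle}\) by Poisson summation. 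Writing \(R(y):=(k_h*\mu)(y)\) and \(R^{\mathrm{per}}(y):=\sum_n R(y+4n)\), Fubini gives \(R^{\mathrm{per}}(y)=\int k_h^{\mathrm{per}}(X-y)\,d\mu(X)\), and the Carath\'eodory cancellation \(\int k_h^A(X-y)\,d\mu(X)=0\) then yields the truncated Fourier expansion
\[
R^{\mathrm{per}}(y)\;=\;\frac{1}{4^d}\sum_{\omega\in(\pi/2)\mathbb{Z}^d,\,|\omega|_\infty>T}\hat k(h\omega)\,\hat\mu(\omega)\,e^{i\langle\omega,y\rangle}.
\]

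Applying Parseval on the torus of side \(4\) together with the bound \(|\hat\mu|\le\|\mu\|_{\mathrm{TV}}\le 2\) gives
\(\|R^{\mathrm{per}}\|_{L^2([-2,2]^d)}^2\le 4^{1-d}\sum_{|\omega|_\infty>T,\,\omega\in(\pi/2)\mathbb{Z}^d}|\hat k(h\omega)|^2\). A Riemann-sum comparison dominates this lattice tail by \(h^{-d}\int_{|\eta|_\infty>hT}|\hat k(\eta)|^2\,d\eta\), and the Sobolev hypothesis \(\kappa\in\sd(\gamma,L')\) (via Plancherel applied to each coordinate derivative \(D^\alpha k\) with \(|\alpha|_1=\gamma\)) then yields \(\int_{|\eta|_\infty>hT}|\hat k|^2\lesssim_{d,L'}(hT)^{-2\gamma}\). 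Altogether \(\|R^{\mathrm{per}}\|_{L^2([-2,2]^d)}^2\lesssim h^{-d-2\gamma}T^{-2\gamma}\), and the prescribed \(T=c_{d,\gamma,L'}\,n^{(d/2+\beta+\gamma)/(\gamma(2\beta+d))}\) is calibrated exactly so that \(h^{-d-2\gamma}T^{-2\gamma}=n^{-2\beta/(2\beta+d)}\).

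Finally, transfer the torus bound back to \(\mathbb{R}^d\) via the split \(\|R\|_{L^2(\mathbb{R}^d)}^2=\int_{[-2,2]^d}|R|^2+\int_{|y|_\infty>2}|R|^2\). The aliasing discrepancy \(R^{\mathrm{per}}-R=\sum_{n\ne 0}R(\cdot+4n)\) on \([-2,2]^d\) and the far-field \(R(y)\) for \(|y|_\infty>2\) are both dominated pointwise by lattice sums of the form \(\sum_{n\ne 0}|k_h(X-y+4n)|\) with \(X\in[-\tfrac12,\tfrac12]^d\); the polynomial decay \(|\kappa(x)|\le c_{\beta,d}|x|^{-\nu}\) with \(\nu\ge\beta+d\) makes the resulting geometric series summable and bounds it by a constant times \(h^{\nu-d}\le h^{\beta}=n^{-\beta/(2\beta+d)}\). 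Combining the three contributions produces \(\|\hat g_S-\hat f\|_2\lesssim n^{-\beta/(2\beta+d)}\), completing the proof. The main technical obstacle is the third paragraph: extracting a sharp tail estimate on \(\sum_{|\omega|_\infty>T}|\hat k(h\omega)|^2\) from the \(L^2\)-based Sobolev hypothesis, which requires both a careful Riemann-sum-to-integral comparison and tracking of constants tied to the tensor structure of \(k=\prod_i\kappa\).
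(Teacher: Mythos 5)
Your proposal is correct in outline but takes a genuinely different route from the paper, so a comparison is in order. The paper works with a smooth cutoff: it multiplies $k_h$ by a fixed bump $\varphi$ supported on $[-2,2]^d$ to obtain $\tilde k_h$, controls $\|\tilde k_h-k_h\|_2$ via the polynomial decay of $\kappa$, expands the compactly supported $\tilde k_h$ in a Fourier series on $[-2,2]^d$, and bounds the tail of that series via the Sobolev hypothesis (Lemma \ref{lem:kernel_expansion}), the Carath\'eodory identity cancelling the low-frequency block inside a five-term triangle-inequality chain. You instead keep $k_h$ intact, pass to the signed measure $\mu=\sum_{j\in S}\lambda_j\delta_{X_j}-\tfrac1n\sum_j\delta_{X_j}$, periodize $R=k_h*\mu$ on $4\mathbb{Z}^d$, and apply Poisson summation so that the Carath\'eodory cancellation deletes the low-frequency part of $R^{\mathrm{per}}$ directly; the cutoff error is replaced by an aliasing/far-field error controlled by the same polynomial decay. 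This is conceptually cleaner (a single Parseval estimate instead of the five-term chain, and no auxiliary cutoff kernel), and it makes the role of the Carath\'eodory condition as a vanishing of $\hat\mu$ on $A$ especially transparent. What the paper's cutoff buys is a more elementary bookkeeping of the tail estimate: once $\tilde k_h$ is a compactly supported smooth function, the lattice tail is bounded by a direct Sobolev-type argument with no need to compare a lattice sum to an integral.

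That last point is where your sketch has a genuine gap worth flagging. You bound $\sum_{\omega\in(\pi/2)\mathbb{Z}^d,\,|\omega|_\infty>T}|\hat k(h\omega)|^2$ by ``a Riemann-sum comparison'' with $h^{-d}\int_{|\eta|_\infty>hT}|\hat k(\eta)|^2\,d\eta$. Such a domination is not automatic: a Riemann sum can overshoot the integral, and the usual monotonicity tricks fail for $|\hat k|^2$ which is a product of one-dimensional factors that is not radially monotone. You would either need to invoke smoothness and boundedness of derivatives of $|\hat k|^2$ to control the error in the Riemann sum (which works, given $\kappa$'s decay and hence the differentiability of $\hat\kappa$, but must be said), or -- more in the spirit of the paper -- bound the lattice sum directly: write
\begin{equation*}
\sum_{|\omega|_\infty>T}|\hat k(h\omega)|^2
\;\le\; T^{-2\gamma}\sum_{\omega}\Bigl(\textstyle\sum_{|\alpha|_1=\gamma}\tfrac{\gamma!}{\alpha!}|\omega|^{2\alpha}\Bigr)|\hat k(h\omega)|^2
\end{equation*}
and then use Parseval on the torus together with the tensor structure $\hat k=\prod_i\hat\kappa$ and the bound $\|D^\alpha k_h\|_2\lesssim h^{-d/2-\gamma}$, exactly as Lemma \ref{lem:kernel_expansion} does. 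With that replacement the argument closes cleanly and gives the same $h^{-d/2-\gamma}T^{-\gamma}$ truncation error as the paper. The far-field and aliasing contributions are handled correctly by your decay argument and indeed come out as $O(h^{\nu-d/2-1/2})\le O(h^\beta)$ once $\nu\ge\beta+d$.

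Two small matters of hygiene: (i) make sure you consistently use $|\omega|_\infty\le T$ to define $A$ as in the paper's Section \ref{sec:Cara_construction} (the paper's own Lemma \ref{lem:kernel_expansion} states $|\omega|_1\le T$, a minor internal inconsistency, but your $\ell_\infty$ choice is the one matching the Carath\'eodory construction and the claimed coreset cardinality), and (ii) note explicitly that $k$ is even (since $\kappa$ is), so $k_h*\mu$ agrees with the paper's $\int k_h(X-y)\,d\mu(X)$.
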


There exists a kernel $k_s \in \mc{C}^\infty$ that satisfies the conditions above for all $\beta$ and $\gamma$. We sketch the details here and postpone the full argument to the Proof of Theorem \ref{thm:main_Caratheodory} in the Appendix. Let $\psi:[-1,1] \to [0,1]$ denote a cutoff function that has the following properties: $\psi \in \mc{C}^{\infty}$, $\psi\big|_{[-1, 1]} \equiv 1$, and $\psi$ is compactly supported on $[-2, 2]$. Define $\kappa_S(x) = \mc{F}[\psi](x)$, and let $k_s(x) = \prod_{i = 1}^d \kappa_S(x_i)$ denote the resulting kernel. Observe that for all $\beta > 0$, the kernel $k_s$ satisfies 
\[
\text{ess sup}_{\omega \neq 0} \frac{1 - \mc{F}[k_s](\omega)}{ \abs{\omega}^{\alpha} } \leq 1, \quad \forall  \alpha \preceq \beta. 
\]
%By a straightforward generalization of Theorem 1.5 of \citet{Tsy09} to $d > 1$, 
Using standard results from \citep{Tsy09}, this implies that the resulting KDE $\hat f_s$ satisfies \eqref{eqn:minimax_condition}. Since $\psi = \mc{F}^{-1}[k_s] \in \mc{C}^\infty$, the Riemann--Lebesgue lemma guarantees that $\abs{\kappa_s(x)} \leq c_{\beta,d} \abs{x}^{\nu}$ is satisfied for $\nu = \ceil{\beta + d}$. Since $\psi$ is compactly supported, an application of Parseval's identity yields $\kappa_s \in \sd(\gamma, c_{\gamma})$. Applying Proposition \ref{prop:Caratheodory_prop} to $k_s$, we conclude that for the task of density estimation, weighted KDEs built on coresets are nearly as powerful as the coreset-based estimators studied in Section~\ref{sec:coreset_based}.

\begin{theorem}
\label{thm:main_Caratheodory}
Let $\eps > 0$. The Carath\'{e}odory coreset estimator $\hat g_S(y)$ built using the kernel $k_s$ and setting $T = c_{d, \beta, \eps} \, n^{\frac{\eps}{d} + \frac{1}{2\beta + d} }$ satisfies  
\[
\sup_{f \in \hd(\beta, L)} \mb{E}_f \norm{\hat g_S - f}_2 \leq c_{\beta, d, L} \, n^{-\frac{\beta}{2\beta + d}}.
\]
The corresponding coreset has cardinality \[m = c_{d, \beta, \eps} n^{ \frac{d}{2\beta + d}+\eps}.\] 
\end{theorem}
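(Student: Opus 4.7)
The plan is to apply Proposition \ref{prop:Caratheodory_prop} to the kernel $k_s$ sketched immediately before the theorem, tuning the Sobolev order $\gamma$ so that the exponent prescribed for the cutoff $T$ collapses to the form stated in the theorem. Since the sketch in the excerpt already indicates how the three hypotheses of the proposition (Sobolev regularity, polynomial tail decay, and the classical minimax rate) are satisfied by $k_s$ for every $\gamma$, the argument reduces mostly to bookkeeping together with a single algebraic identity relating $\gamma$ and $\eps$.

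First I would give the three hypothesis verifications on $k_s(x) = \prod_{i=1}^d \kappa_s(x_i)$ with $\kappa_s = \mc{F}[\psi]$ in full. The compact support of $\psi \in \mc{C}^\infty$ combined with Parseval's identity gives $\|D^\alpha \kappa_s\|_2 = \|\omega^\alpha \psi(\omega)\|_2 < \infty$ whenever $|\alpha|_1 = \gamma$, so $\kappa_s \in \sd(\gamma, L'_\gamma)$ for every integer $\gamma \ge 1$ with $L'_\gamma$ depending only on $\psi$ and $\gamma$. Repeated integration by parts in the inverse Fourier transform, exploiting the smoothness and compact support of $\psi$, yields $|\kappa_s(x)| \le c_N |x|^{-N}$ for any $N$, so the polynomial tail bound of Proposition \ref{prop:Caratheodory_prop} holds with $\nu = \lceil \beta + d \rceil$. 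Finally, since $\psi \equiv 1$ on $[-1,1]$, the function $1 - \mc{F}[k_s]$ vanishes to all orders at the origin, so $k_s$ is a higher-order kernel in the sense of \citep[Proposition 1.2]{Tsy09}, from which the classical minimax bound \eqref{eqn:minimax_condition} with bandwidth $h = n^{-1/(2\beta + d)}$ follows.

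Next I would match exponents. Proposition \ref{prop:Caratheodory_prop} sets $T = c_{d,\gamma,L'_\gamma}\, n^{(d/2 + \beta + \gamma)/(\gamma(2\beta + d))}$, and a short calculation gives
\[
\frac{d/2 + \beta + \gamma}{\gamma(2\beta + d)} \;=\; \frac{1}{2\beta + d} + \frac{1}{2\gamma},
\]
so taking $\gamma = \lceil d/(2\eps) \rceil$ yields $T \le c_{d,\beta,\eps}\, n^{1/(2\beta + d) + \eps/d}$. The cardinality bound $m \le 2(1 + 4T/\pi)^d + 1$ from the Carath\'{e}odory construction in Section~\ref{sec:Cara_construction} then becomes $m = c_{d,\beta,\eps}\, n^{d/(2\beta + d) + \eps}$, and the stated risk bound follows directly from the proposition. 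The only real point to be careful about is that the constants produced in the verification step depend solely on $\psi$, $\beta$, $d$, and $\gamma$, so that after $\gamma$ is fixed by the exponent-matching identity, the final constant $c_{\beta,d,L}$ is genuinely uniform over $\hd(\beta, L)$; beyond this, no ingredients beyond Proposition \ref{prop:Caratheodory_prop} are needed.
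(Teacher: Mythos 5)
Your proposal is correct and follows essentially the same route as the paper: verify the three hypotheses of Proposition \ref{prop:Caratheodory_prop} for $k_s$ (Parseval for the Sobolev bound, the compact support of $\psi$ for the polynomial decay, and the vanishing of $1-\mc{F}[k_s]$ near the origin for the minimax property), then tune $\gamma$ to absorb the extra exponent into $\eps$. Your choice $\gamma = \lceil d/(2\eps)\rceil$ coincides with the paper's $\gamma = \frac{d(d/2+\beta)}{\eps(2\beta+d)} = \frac{d}{2\eps}$ via the same algebraic identity, and your identity $\frac{d/2+\beta+\gamma}{\gamma(2\beta+d)} = \frac{1}{2\beta+d} + \frac{1}{2\gamma}$ is a slightly cleaner way to present that step.
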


Theorem \ref{thm:main_Caratheodory} shows that the Carath\'eodory coreset estimator achieves the minimax rate of estimation with near-optimal coreset size. In fact, a small modification yields a near-optimal rate of convergence for any coreset size as in Theorem~\ref{thm:coreset_rate}.

\begin{corollary}
\label{cor:general_Cara_bd}
Let $\eps > 0$ and $m \leq c_{\beta, d, \eps} \, n^{\frac{d}{2\beta + d} + \eps}$. The Carath\'{e}odory coreset estimator $\hat g_S(y)$ built using the kernel $k_s$, setting $h = m^{-\frac{1}{d} +  \frac{\eps}{\beta} }$ and $T =c_d \, m^{1/d}$, satisfies
\begin{equation*}
\sup_{f \in \hd(\beta, L)} \mb{E}\norm{ \hat g_S - f}_2 \leq c_{\beta, d, \eps, L} \, \left(  m^{-\frac{\beta}{d} + \eps} + n^{-\frac{\beta}{2\beta + d} + \eps} \right),
\end{equation*} 
and the corresponding coreset has cardinality $m$.
\end{corollary}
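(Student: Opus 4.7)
The plan is to mimic the proof of Theorem~\ref{thm:main_Caratheodory}, but with the bandwidth tuned to the coreset size $m$ rather than the sample size $n$. Let $\hat f(y) = \frac{1}{n}\sum_{j=1}^n k_{s,h}(X_j - y)$ be the unweighted KDE on the full dataset using the smooth kernel $k_s$ at bandwidth $h = m^{-1/d + \eps/\beta}$, and let $\hat g_S$ be the Carath\'eodory coreset estimator built from $\hat f$ with cutoff $T = c_d m^{1/d}$. By construction, $|S| \leq 2(1 + 4T/\pi)^d + 1 = O(m)$, matching the claimed cardinality. The triangle inequality
\[
\|\hat g_S - f\|_2 \leq \|\hat f - f\|_2 + \|\hat g_S - \hat f\|_2
\]
reduces the task to bounding each piece by $m^{-\beta/d+\eps} + n^{-\beta/(2\beta+d)+\eps}$.

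For the KDE error $\|\hat f - f\|_2$, I would apply a standard bias--variance decomposition. Because $k_s$ is an infinite-order kernel (its Fourier transform $\psi$ is identically $1$ near the origin), the bias on $\hd(\beta, L)$ is $O(h^\beta) = O(m^{-\beta/d + \eps})$ for every $\beta > 0$. The variance integrates to $O((nh^d)^{-1})$; substituting $h$ and using the assumption $m \leq c_{\beta,d,\eps}\, n^{d/(2\beta+d)+\eps}$ gives $(nh^d)^{-1/2} = n^{-1/2}\, m^{1/2 - d\eps/(2\beta)} \leq c\, n^{-\beta/(2\beta+d)+\eps/2}$.

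For the Carath\'eodory approximation error $\|\hat g_S - \hat f\|_2$, I would invoke the Fourier-analytic argument underlying Proposition~\ref{prop:Caratheodory_prop}. By Parseval, this norm is controlled by the $L^2$ norm of $\mc{F}[k_{s,h}](\xi)\,[\varphi_n(\xi) - \varphi_S(\xi)]$, where $\varphi_n(\xi) = \tfrac{1}{n}\sum_j e^{i\langle X_j, \xi\rangle}$ and $\varphi_S(\xi) = \sum_{j\in S} \lambda_j e^{i\langle X_j, \xi\rangle}$. The Carath\'eodory condition~\eqref{eqn:Cara_condition} forces $\varphi_n - \varphi_S$ to vanish on $A$, and combined with the Sobolev smoothness $\kappa_s \in \sd(\gamma, c_\gamma)$, available for any $\gamma$ because $k_s \in \cC^\infty$, one obtains a bound of the form $\|\hat g_S - \hat f\|_2 \leq C_\gamma (Th)^{-\gamma} h^{-d/2}$. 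With $Th = c_d m^{\eps/\beta}$ and $h^{-d/2} = m^{1/2 - d\eps/(2\beta)}$, choosing $\gamma = \gamma(\eps, \beta, d)$ large enough makes this at most $m^{-\beta/d + \eps}$.

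The main obstacle lies in this Carath\'eodory step: one must verify that the grid $A$ captures the Fourier support of $\mc{F}[k_{s,h}]$ (which requires $T \geq 2/h$, automatic for $c_d$ large enough since $Th = c_d m^{\eps/\beta} \geq 2$), and then trace the smoothness-dependent constants through the argument so that the chosen $\gamma$ yields the target bound. The bias--variance analysis of $\hat f$ is, by contrast, routine. Combining the two estimates completes the proof.
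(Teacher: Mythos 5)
Your proposal follows the paper's own proof essentially step for step: triangle inequality, a standard bias--variance bound $\E_f\|\hat f - f\|_2 \leq c(h^\beta + (nh^d)^{-1/2})$ for the full KDE, and the Proposition~\ref{prop:Caratheodory_prop} Carath\'eodory error bound $c_{d,\gamma}T^{-\gamma}h^{-d/2-\gamma} = c_{d,\gamma}(Th)^{-\gamma}h^{-d/2}$ with $\gamma$ taken of order $1/\eps$; your choice of $\gamma$ and your bookkeeping with $Th = c_d m^{\eps/\beta}$ reproduce the paper's $\gamma = (\beta + \tfrac{d}{2})(\tfrac{\beta}{d\eps}-1)$. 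One minor imprecision worth noting: since the Carath\'eodory condition~\eqref{eqn:Cara_condition} only constrains the discrete lattice $A \subset \tfrac{\pi}{2}\mathbb{Z}^d$, the Parseval comparison must be run against the Fourier series of the truncated kernel $\tilde k_h$ on $[-2,2]^d$ (as in Lemmas~\ref{lem:cutoff_kernel}--\ref{lem:kernel_expansion}), not the continuous Fourier transform of $k_{s,h}$, and this also produces an extra $c_{\beta,d}h^{\beta}$ cutoff term in \eqref{eqn:cara_main_h} that your sketch omits but which is of the same order as the bias and therefore harmless.
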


Next we apply Proposition \ref{prop:Caratheodory_prop} to the popular Gaussian kernel $\phi(x) = (2 \pi)^{-d/2} \exp(-\frac{1}{2}\abs{x}_2^2)$. This kernel has rapid decay in the real domain and Fourier space, and is thus amenable to our techniques. Moreover, $k_\phi$ is a kernel of order $\ell = 1$, \citep[Definition 1.3 and Theorem 1.2]{Tsy09} and so the standard KDE $\hat f_\phi$ on the full dataset attains the minimax rate of estimation $c_{d,L} n^{1/(2 + d)}$ over the Lipschitz densities $\hd(1, L)$.  

\begin{theorem}
Let $\eps> 0$. The Carath\'{e}odory coreset estimator $\hat g_\phi(y)$ built using the kernel $\phi$ and setting $T = c_{d,\eps} \, n^{ \frac{1}{2 + d}+\frac{\eps}{d}}$ satisfies  
\[
\sup_{f \in \hd(1, L)} \mb{E} \norm{\hat g_\phi - f}_2 \leq c_{ d, L} \, n^{-\frac{1}{2 + d}}.
\]
The corresponding coreset has cardinality \[m = c_{d, \eps} n^{\frac{d}{2 + d}+\eps}.\]
\end{theorem}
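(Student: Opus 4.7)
The plan is to apply Proposition \ref{prop:Caratheodory_prop} directly to the Gaussian kernel with $\beta=1$ and a suitable choice of the smoothness parameter $\gamma = \gamma(\eps, d)$. The theorem is essentially a corollary of that proposition, so the main work is to verify its hypotheses for $\phi$ and to translate the expression for $T$ given there into the one asserted here.

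First I would verify the three hypotheses of Proposition \ref{prop:Caratheodory_prop}. The tensor-product structure $\phi(x)=\prod_{i=1}^d \kappa(x_i)$ with $\kappa(x)=(2\pi)^{-1/2}e^{-x^2/2}$ is immediate. Since every derivative of the univariate Gaussian $\kappa$ lies in $L^2(\mathbb{R})$, we have $\kappa \in \sd(\gamma, L'_\gamma)$ for every nonnegative integer $\gamma$, with an explicit constant $L'_\gamma$ depending only on $\gamma$. The pointwise bound $|\kappa(x)| \le c_\nu |x|^{-\nu}$ for any $\nu$, and in particular for $\nu = \lceil 1 + d\rceil \geq \beta + d$, follows from the super-polynomial decay of the Gaussian. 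Finally, the minimax condition \eqref{eqn:minimax_condition} for $\beta=1$ is exactly the remark recorded in the excerpt just before the statement: $\phi$ is a first-order kernel, so the classical KDE $\hat f_\phi$ with $h = n^{-1/(2+d)}$ attains the rate $n^{-1/(2+d)}$ over $\hd(1, L)$ (see \citet[Theorem 1.2]{Tsy09}).

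Next, I would apply Proposition \ref{prop:Caratheodory_prop} with $\beta = 1$. The prescribed cutoff frequency there is
\[
T = c_{d,\gamma,L'}\, n^{\frac{d/2 + 1 + \gamma}{\gamma(2+d)}} = c_{d,\gamma,L'}\, n^{\frac{1}{2+d} + \frac{d/2+1}{\gamma(2+d)}}.
\]
Choosing $\gamma = \gamma(\eps, d)$ so that $\frac{d/2+1}{\gamma(2+d)} \le \frac{\eps}{d}$, i.e.\ any integer $\gamma \ge \frac{d(d/2+1)}{\eps(2+d)}$, produces a $T$ bounded by $c_{d,\eps}\, n^{\frac{1}{2+d}+\frac{\eps}{d}}$. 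Inflating the absolute constant if necessary, we may assume equality, yielding the $T$ in the statement. The conclusion of Proposition \ref{prop:Caratheodory_prop} then gives the claimed risk bound
\[
\sup_{f\in\hd(1,L)} \mb{E}\,\|\hat g_\phi - f\|_2 \le c_{d,L}\, n^{-\frac{1}{2+d}}.
\]

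Finally, by construction in Section \ref{sec:Cara_construction}, the Carath\'eodory coreset has cardinality at most $2(1 + 4T/\pi)^d + 1 \le c_d T^d = c_{d,\eps} \, n^{\frac{d}{2+d}+\eps}$, matching the cardinality claim. I do not expect any genuine obstacle: every step is either a hypothesis-check for the Gaussian (decay, smoothness, order-one kernel property) or an elementary arithmetic manipulation of the exponent in $T$. The only point requiring a small amount of care is ensuring that the constant $L'_\gamma$ appearing in the Sobolev-type condition $\kappa \in \sd(\gamma,L'_\gamma)$ is absorbed cleanly into the $\eps$-dependent prefactor in $T$; this is handled by fixing $\gamma$ as a function of $\eps$ and $d$ before invoking the proposition.
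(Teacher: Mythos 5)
Your proof is correct and takes essentially the same route as the paper: the paper likewise presents this theorem as a direct application of Proposition~\ref{prop:Caratheodory_prop} to the Gaussian kernel, verifying the tensor-product form, Sobolev condition, polynomial decay, and order-one kernel property, then choosing $\gamma$ as a function of $d$ and $\eps$ (exactly $\gamma = \frac{d(d/2+1)}{\eps(2+d)}$, matching your bound) to convert the exponent in $T$ from Proposition~\ref{prop:Caratheodory_prop} into $\frac{1}{2+d} + \frac{\eps}{d}$. Your arithmetic on the exponent and the cardinality bound $m \le 2(1+4T/\pi)^d + 1 \le c_d T^d$ are both consistent with the paper.
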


In addition, we have a nearly matching lower bound to Theorem \ref{thm:main_Caratheodory} for coreset KDEs. In fact, our lower bound applies to a generalization of coreset KDEs where the vector of weights $(\lambda_j)_j$ is not constrained to be in the simplex but can range within a hypercube of width that may grow polynomially with $n$: $\max_{j \in S} \abs{\lambda _j} \leq n^B$.

\begin{theorem}
\label{thm:weight_kde_lbd}
Let $A, B \geq 1$. Let $k$ denote a kernel with $\norm{k}_2 \leq n$. Let $\hat g_S$ denote a weighted coreset KDE with bandwidth $h \geq n^{-A}$ built from $k$ with weights $\{\lambda_j\}_{j \in S}$ satisfying $\max_{j \in S} \abs{\lambda _j} \leq n^B$. Then
\begin{equation*}
\sup_{ f \in \hd(\beta, L)}  \E_f \norm{\hat g_S -  f}_2 \geq \\ c_{\beta, d, L} \left[ (A + B)^{-\frac{\beta}{d}} (m \log{n})^{-\frac{\beta}{d}} + n^{-\frac{\beta}{2\beta + d}} \right].
\end{equation*}
%%%NOTE: multline*
\end{theorem}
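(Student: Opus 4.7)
The plan is to prove the two terms of the lower bound separately. The classical term $n^{-\beta/(2\beta+d)}$ follows immediately from the standard minimax lower bound on density estimation over $\hd(\beta,L)$ \citep[see e.g.][Chapter 2]{Tsy09}, since a weighted coreset KDE is in particular a valid estimator. I therefore focus on the coreset-specific term $(A+B)^{-\beta/d}(m\log n)^{-\beta/d}$, which I obtain by adapting the Fano-based argument used in the lower bound of Theorem \ref{thm:coreset_rate}.

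First I would construct a packing $\{f_1, \ldots, f_N\} \subset \hd(\beta, L)$ that is $2\delta$-separated in $L_2$, with $\delta = c(A+B)^{-\beta/d}(m\log n)^{-\beta/d}$; by the Kolmogorov--Tikhomirov bound already used in Section \ref{sec:coreset_based}, the packing can be chosen with $\log N \geq C'(A+B)\, m \log n$, where $C'$ can be made arbitrarily large by taking $c$ small. The lower bound then reduces to the hypothesis test of recovering $V \sim \mathrm{Unif}\{1, \ldots, N\}$ from data $X_1, \ldots, X_n$ drawn i.i.d.\ from $f_V$.

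The next step quantizes the weights and counts encodings. From $\|k\|_2 \leq n$ and $h \geq n^{-A}$ we get $\|k_h\|_2 \leq n^{Ad/2+1}$. Replacing each weight $\lambda_j \in [-n^B, n^B]$ by its nearest multiple of $\eta := \delta/(2m\, n^{Ad/2+1})$ yields a quantized estimator $\tilde g_S$ with $\|\hat g_S - \tilde g_S\|_2 \leq \delta/2$, and the number of distinct quantized pairs $(S, \tilde \lambda)$ is at most $\binom{n}{m}(2n^B/\eta + 1)^m \leq \exp(C(A+B)\, m \log n)$ for some $C=C(\beta,d,L)$. This is the key combinatorial input: a weighted coreset KDE satisfying the hypotheses of the theorem has only $\exp(O((A+B) m \log n))$ effectively distinct outcomes.

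Finally, I would apply the Fano argument of Theorem \ref{thm:coreset_rate} with $\tilde g_S$ in place of a coreset-based estimator. Let $\hat V$ denote the minimum-$L_2$-distance classifier that assigns $\hat g_S$ to the nearest $f_v$. Markov's inequality and the packing separation imply $\p(\hat V = V) \geq 1/2$ whenever $\sup_v \E_v \|\hat g_S - f_v\|_2$ is smaller than a sufficiently small multiple of $\delta$; the encoding count yields $I(V; \hat V) \leq C(A+B)\, m \log n$; and for $c$ small enough $\log N$ dominates $I(V;\hat V)$, contradicting Fano's inequality. The main obstacle is precisely this information-theoretic step: bounding $I(V; \hat V)$ is delicate because $\hat V$ depends on continuous data $X_S$, so off-the-shelf minimax lower bounds do not apply. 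This is the difficulty navigated in the Appendix proof of Theorem \ref{thm:coreset_rate}; the new ingredient here is that the extra dependence of the weights $\lambda$ on the full dataset contributes only an additional multiplicative factor of $(A+B)$ to the effective information capacity once the weights are quantized.
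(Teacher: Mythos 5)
Your proposal is correct and follows essentially the same route as the paper: the paper formalizes your weight-quantization step via ``decorated coresets'' (a coreset plus a binary string of length $R = O((A+B)\,m\log n)$ encoding the quantized weights, paired with the points via an ordering by first coordinate), and Proposition~\ref{prop:decorated} is exactly the Fano argument of Theorem~\ref{thm:coreset_rate} with the additional additive term $R$ in the mutual-information bound. The only loose phrasing is your claim that the quantized estimator has finitely many ``outcomes''---it still depends on the continuous positions $X_S$---but you correctly identify that this continuous contribution is controlled exactly as in Theorem~\ref{thm:coreset_rate}, with the quantized weights entering only additively.
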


This result is essentially a consequence of the lower bound in Theorem~\ref{thm:coreset_rate} because, in an appropriate sense, coreset KDEs with bounded weights are well-approximated by coreset-based estimators. Hence, in the case of bounded weights, allowing these weights to be measurable functions of the entire dataset rather than just the coreset, as would be required in Section~\ref{sec:coreset_based}, does not make a significant difference for the purpose of estimation. The full details of Theorem~\ref{thm:weight_kde_lbd} are postponed to the Appendix. 

% We do not have an effective lower bound if the weights $\{ \lambda_j \}_{j \in S}$ are allowed to be exponentially large in $n$. However, allowing arbitrary weights leads to rather counterintuitive coreset KDEs. For example, using the Fourier technique in this section one can show that in the univariate case, for a large enough constant \textit{every} subset of cardinality $c_{\eps} n^{1/(2\beta + 1)}$ admits a weighted coreset KDE $\hat g_S$ that approximates the unknown density $f \in \hd(\beta, L)$ at the minimax rate $c_{\beta, d, L} \, n^{-\beta/(2\beta + 1)}$. Essentially, this is because \eqref{eqn:Cara_condition} has a solution for some choice of weights $\{ \lambda_j \}_{j \in S}$ if the data points in the coreset are distinct, due to the invertibility of the resulting Vandermonde matrix. Hence, even subsets of the data that intuitively do a very poor job of summarizing the full dataset, such as a subset of $m$ points that are all very close together, still perform well for the task of density estimation using coreset KDEs with arbitrarily large weights. 

\subsection{Proof sketch of Proposition \ref{prop:Caratheodory_prop}}

Here we sketch the proof of Proposition \ref{prop:Caratheodory_prop}, our main tool in constructing effective coreset KDEs. Full details of the argument may be found in the Appendix. 

Let $k(x) = \prod_{i =1}^d \kappa(x_i)$ denote a kernel, and suppose that $\hat f(y) = \frac{1}{n} \sum_{i = 1}^n k_h(X_i -y)$ is a good estimator for an unknown density $f$ in that 
\[\norm{ f - \hat f}_2 \leq \eps := c_{\beta,d } n^{-\frac{\beta}{2\beta + d}}\] on setting $h = n^{-1/(2\beta + d)}$. Our goal is to find a subset $S \subset [n]$ and weights $\{\lambda_j\}_{j \in S}$ such that \[\frac{1}{n} \sum_{i = 1}^n k_h(X_i -y) \approx \sum_{j \in S} \lambda_j k_h(X_j - y)  .\] Suppose for simplicity that $\kappa$ is compactly supported on $[-1/2, 1/2]$. By hypothesis and Parseval's theorem $\kappa \in \sd(\gamma, L')$, and we can further show that $k \in \sd(\gamma, c_{d, L'})$ and $k_h \in \sd(\gamma, c_{d, L'} h^{-d/2 - \gamma})$. Let $\mc{\bar F}$ denote the Fourier transform on the interval $[-1, 1]$. Using the Fourier decay of $k_h$, we have
\begin{equation}
\label{eqn:kernel_series}
\norm{ k_h(x) - \sum_{ \abs{\omega}_\infty < T } \mc{\bar F}[k_h](\omega) e^{i \langle x, \omega \rangle} }_2 \leq \eps 
\end{equation}
when $T = (\frac{c_{d, \gamma, L'} h^{-\frac{d}{2} - \gamma}}{\eps} )^{1/\gamma} = c_{d, \gamma, L'} \, n^{\frac{d/2 + \beta + \gamma}{\gamma(2\beta + d)}}$. Observe that this matches the setting of $T$ in Proposition~\ref{prop:Caratheodory_prop}.  

The approximation \eqref{eqn:kernel_series} implies that for $X_i \in [-1/2, 1/2]^d$, 
\begin{equation*}
\hat f(y) \approx \sum_{ \abs{\omega}_\infty < T } \mc{\bar F}[k_h](\omega) \left( \frac{1}{n} \sum_{i = 1}^n e^{i \langle X_i, \omega \rangle} \right)  e^{i \langle y, \omega \rangle}. 
\end{equation*}
Using the Carath\'{e}odory coreset and weights $\{\lambda_j\}_{j \in S}$ constructed in Section \ref{sec:Cara_construction}, it follows that
\begin{equation*}
\sum_{ \abs{\omega}_\infty < T } \mc{\bar F}[k_h](\omega) \left( \frac{1}{n} \sum_{i = 1}^n e^{i \langle X_i, \omega \rangle} \right)  e^{i \langle y, \omega \rangle} = \\
\sum_{ \abs{\omega}_\infty < T } \mc{\bar F}[k_h](\omega) \left( \sum_{i = 1}^n \lambda_j e^{i \langle X_i, \omega \rangle} \right)  e^{i \langle y, \omega \rangle}.
\end{equation*}
%%%NOTE: multline*
Applying \eqref{eqn:kernel_series} again, we see that the right-hand-side is approximately equal to $\hat g_S(y)$, the estimator produced in Section \eqref{sec:Cara_construction}. By the triangle inequality, we conclude that $\norm{\hat g_S(y) - f}_2 \leq c_{\beta, d} \, \eps$, as desired.

\section{Lower bounds for coreset KDEs with uniform weights} 

In this section we study the performance of univariate uniformly weighted coreset KDEs
\begin{equation*}
\hat f_S^{\mathsf{unif}}(y) = \frac{1}{m} \sum_{i \in S} k_h(X_i - y),
\end{equation*}
where $X_S$ is the coreset and $|S| = m$. The next results demonstrate that for a large class of kernels, there is significant gap between the rate of estimation achieved by $\hat f_S^{\text{unif}}(y)$ and that of coreset KDEs with general weights. First we focus on the particular case of estimating Lipschitz densities, the class $\hd(1, L)$. For this class, the minimax rate of estimation (over all estimators) is $n^{-1/3}$, and this can be achieved by a weighted coreset KDE of cardinality $c_\eps n^{1/3 + \eps}$ by Theorem \ref{thm:main_Caratheodory}, for all $\eps > 0$. 

\begin{theorem}
    \label{thm:kde_lbd}
     Let $k$ denote a nonnegative kernel satisfying
    $$
            k(t) = O( \abs{t}^{-(k+1)}), \quad  \text{and} \quad
            \mc{F}[k](\omega) = O(\abs{\omega}^{-\ell})
    $$
        for some $\ell > 0,\, k > 1$. Suppose that $0<\alpha<1/3$. If 
        \[
        m \leq \frac{n^{\frac23 - 2\left( \alpha(1 - \frac{2}{\ell}) + \frac{2}{3 \ell} \right)}}{\log n},
        \]
        then
        \begin{equation}
            \label{eqn:kde_lbd}
            \inf_{ h, S: |S|  \leq m } \, \sup_{f \in \mc{P}_{\mc{H}}(1, L)} \mathbb{E} \norm{ \unif - f  }_2= \Omega_k\Big( \frac{n^{-\frac13+\alpha}}{\log n} \Big).  
        \end{equation}
        The infimum above is over all possible choices of bandwidth $h$ and all coreset schemes $S$ of cardinality at most $m$.
\end{theorem}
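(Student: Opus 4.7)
The plan is to derive the lower bound by Fano's inequality applied to a carefully constructed family of Lipschitz perturbations, then exploit the uniform-weight structure of $\unif$ to bound the effective information such estimators can extract. I would begin by constructing the hypothesis family: fix parameters $\delta, \eps \in (0,1)$ and $M \asymp 1/\delta$, let $\psi_1, \dots, \psi_M$ be mean-zero $L$-Lipschitz bumps supported on disjoint intervals of length $\delta$ with $\|\psi_j\|_2 \asymp \delta^{3/2}$, and for $\sigma \in \{-1, +1\}^M$ define $f_\sigma = f_0 + \eps \sum_{j=1}^M \sigma_j \psi_j \in \hd(1, L)$, where $f_0$ is a fixed density in $\hd(1, L/2)$. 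Standard calculations give pairwise $L^2$-separation $\gtrsim \eps\delta^{3/2}\sqrt{\rho(\sigma, \sigma')}$ in terms of Hamming distance, and per-sample $\chi^2$-divergence $\lesssim \eps^2 \delta^2$; in the unconstrained minimax problem, tuning $\delta \asymp n^{-1/3}$ recovers the classical rate.

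The key structural step is to bound $I(\sigma;\unif)$ more tightly than the naive $I(\sigma;X) \lesssim n\eps^2\delta^2$ by exploiting the uniform-weight restriction. Following the coreset-channel analysis used in the proof of Theorem~\ref{thm:coreset_rate}, the estimator $\unif$ is parameterized by a combinatorial label $(h, S)$ together with the continuous data $X_S$. The bandwidth $h$ can be truncated to $[n^{-A}, n^A]$ (outside this range the decay and tail hypotheses on the kernel force $\unif$ to be trivially far from any $f \in \hd(1, L)$) and discretized to $O(\log n)$ dyadic values with negligible $L^2$ loss, so the combinatorial part of $\unif$ carries at most $O(m \log n)$ bits. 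The Fourier-decay hypothesis then attenuates the contribution of each perturbation $\psi_j$ at scale $\delta$ to $\unif$ by a factor $(h/\delta)^\ell$ whenever $h \gtrsim \delta$, while the ``spikiness'' constraint $h \gtrsim 1/m$ (required so that $\|\unif\|_2$ can match a bounded Lipschitz density in $L^2$) produces the complementary factor. Balancing these two effects is what generates the precise exponents $(1 - 2/\ell)$ and $2/(3\ell)$ appearing in the theorem.

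Combining these ingredients through Fano's inequality on the perturbation family, after setting $\delta \asymp n^{-(1/3 - \alpha)}$ and $\eps$ at the largest value compatible with the Lipschitz constraint, the effective bit budget $O(m \log n)$ of $\unif$, augmented by the kernel-attenuated continuous contribution of $X_S$, must exceed the packing entropy $M \asymp 1/\delta$ for $\unif$ to distinguish the $f_\sigma$. This forces the threshold $m \gtrsim n^{2/3 - 2(\alpha(1 - 2/\ell) + 2/(3\ell))}/\log n$ for achieving rate $n^{-1/3 + \alpha}/\log n$, matching the claim.

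The main obstacle will be the same subtlety that makes the proof of Theorem~\ref{thm:coreset_rate} delicate: since $(h, S)$ depends on the whole dataset $X$ rather than only on $X_S$, a naive data-processing inequality on the triple $(h, S, X_S)$ only reproduces the standard Fano bound and gives nothing beyond the unconstrained minimax rate. The correct treatment is to view $(h, S)$ as a coreset-channel-type label and to bound $I(\sigma; \unif)$ by the combinatorial entropy of this label together with a controlled per-coordinate KL contribution through $X_S$, which is where the uniform-weight restriction actually bites. A secondary technical challenge is tracking the polylogarithmic factor and justifying the auxiliary role of the tail decay hypothesis, which I expect enters by guaranteeing that $\unif$ has effectively bounded spatial support so that both the truncation of $h$ and the covering of the $X_S$-dependence go through.
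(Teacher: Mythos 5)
Your proposal rests on Fano's inequality with an effective bit budget of $O(m\log n)$ for the pair $(h,S)$, but this mechanism cannot produce the threshold claimed in the theorem. With your packing at scale $\delta \asymp n^{-(1/3-\alpha)}$ the hypothesis family has entropy $\asymp n^{1/3-\alpha}$, so the bit-counting step only fails once $m\log n \gtrsim n^{1/3-\alpha}$, i.e.\ it can rule out coresets of size up to roughly $n^{1/3-\alpha}/\log n$ — polynomially short of the stated threshold $n^{2/3-2(\alpha(1-2/\ell)+2/(3\ell))}/\log n$ (take $\ell$ large and $\alpha$ small: you need $n^{2/3}$, Fano gives $n^{1/3}$). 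Indeed, the information-theoretic route is exactly what the paper uses for \emph{weighted} coreset KDEs (Theorem~\ref{thm:weight_kde_lbd}), and it yields only $(m\log n)^{-\beta/d}$; the entire point of Theorem~\ref{thm:kde_lbd} is that uniform weights cost polynomially more, and that extra cost is invisible to any data-processing/mutual-information argument. Your aside about Fourier attenuation and a ``spikiness constraint $h\gtrsim 1/m$'' gestures at the right phenomena, but folding them into a mutual information bound is not a step that can be made rigorous: Fano controls how much the estimator can \emph{learn} about $f$, whereas the obstruction here is what a uniform-weight $m$-atom KDE can \emph{represent}, even with full knowledge of $f$.

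The paper's proof is purely approximation-theoretic and deterministic in $X$: it exhibits, for each regime of $h$, a single density $f$ such that $\inf_{X:|X|=m}\|\unif - f\|_2$ is large over \emph{all} placements of the $m$ atoms (not just subsets of the sample). For small bandwidth ($h\le n^{-1/3+\delta}$, Proposition~\ref{prop:small_h}), one takes $f$ with an interval on which $f\asymp 1/(q_1 mh)$; a counting argument (Lemma~\ref{lem:step_lbd}) shows the KDE either concentrates too much mass there (if many atoms land in the interval, each contributing $\gtrsim 0.9/m$ of mass over a window of width $\asymp q_1 h$) or leaves a set of large measure nearly uncovered, and either way incurs $L_1$ error $\asymp 1/(mh\log^2(mh))$. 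For large bandwidth (Proposition~\ref{prop:large_h}), one takes $f$ with an oscillation at frequency $\eps^{-1}$ with $\eps=n^{-1/3+\alpha}$; the Fourier decay $|\mc{F}[k](\omega)|\le|\omega|^{-\ell}$ forces any KDE with $h\ge c\,\eps^{1-2/\ell}$ to miss this oscillation, giving error $\Omega(\eps)$. The exponents $(1-2/\ell)$ and $2/(3\ell)$, and the resulting constraint on $m$, come from making these two ranges of $h$ overlap — not from balancing a bit budget against packing entropy. To repair your argument you would need to replace the Fano core with a representability lower bound of this kind; the hypothesis-family construction and the bandwidth discretization in your plan are then unnecessary.
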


By this result, if $k$ has lighter than quadratic tails and fast Fourier decay, the error in \eqref{eqn:kde_lbd} is a polynomial factor larger than the minimax rate $n^{-1/3}$ when $m \ll n^{2/3}$. Hence, our result covers a wide variety of kernels typically used for density estimation and shows that the uniformly weighted coreset KDE performs much worse than the encoding estimator or the Carath\'{e}odory method. In addition, for very smooth univariate kernels with rapid decay, we have the following lower bound that applies for all $\beta > 0$. 

\begin{theorem}
\label{thm:kde_lbd_allbeta}
Fix $\beta>0$ and a nonnegative kernel $k$ on $\mathbb{R}$ satisfying the following fast decay and smoothness conditions:
\begin{align}
\lim_{s\to+\infty}\frac1{s}\log\frac1{\int_{|t|>s}k(t)dt}&>0,
%\label{e_decay}
\\
\lim_{\omega\to\infty}\frac1{|\omega|}\log\frac1{|\mathcal{F}[k](\omega)|}&>0,
%\label{e_smooth}
\end{align}
where we recall that $\mathcal{F}[k]$ denotes the Fourier transform.
Let $\hat{f}_S^{\mathsf{unif}}$ be the uniformly weighted coreset KDE.
Then there exists $L_{\beta}>0$ such that for $L\ge L_{\beta}$ and any $m$ and $h>0$, we have
\begin{align*}
\inf_{ h, S: |S| \leq  m } \, \sup_{f \in \mc{P}_{\mc{H}}(\beta, L)} \mathbb{E} \norm{ \hat{f}_S^{\mathsf{unif}} - f  }_2
%&=\Omega\left(\tfrac1{\log n}\left(\tfrac1{mh}\vee
%h^{\beta}\right)\right)
%\\
&=\Omega_{\beta, k}\left(\tfrac{m^{-\frac{\beta}{1+\beta}}}{\log^{\beta + \frac{1}{2}} m}\right).
\end{align*}
\end{theorem}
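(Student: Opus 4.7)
}

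My plan is to reduce the minimax problem to an approximation-theoretic question and then prove the approximation lower bound by combining an Assouad-type bump construction with a two-regime analysis of the candidate bandwidth, exploiting the kernel's exponential Fourier decay. The key reduction is that the uniformly weighted coreset KDE $\hat{f}_S^{\mathsf{unif}}$ belongs almost surely to the restricted class
\[
\mc{G}_m := \bigcup_{h > 0}\Bigl\{\tfrac{1}{m}\textstyle\sum_{i=1}^{m} k_h(x_i - \cdot)\,:\, x_1, \ldots, x_m \in \mb{R}\Bigr\},
\]
so $\E_f\|\hat{f}_S^{\mathsf{unif}} - f\|_2 \geq \operatorname{dist}_{L^2}(f, \mc{G}_m)$ pointwise and it suffices to exhibit $f \in \mc{P}_{\mc{H}}(\beta, L)$ with $\operatorname{dist}_{L^2}(f, \mc{G}_m) \gtrsim m^{-\beta/(1+\beta)}/(\log m)^{\beta + 1/2}$.

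For the hard hypotheses, I would set $h_0 \asymp \bigl(m(\log m)^{(\beta+1/2)(\beta+1)/\beta}\bigr)^{-1/(1+\beta)}$ (the log exponent is tuned so $L h_0^\beta$ produces the stated rate), $N = \lfloor 1/h_0 \rfloor$, and partition $[-1/2, 1/2]$ into intervals $I_1, \ldots, I_N$ of length $h_0$. Fix a smooth, compactly supported, mean-zero bump $\phi$ with $\|\phi\|_2 = 1$, denote by $\phi_j$ the rescaled translate supported on $I_j$, and define
\[
f_\sigma := f_0 + L h_0^\beta \sum_{j=1}^{N} \sigma_j \phi_j, \qquad \sigma \in \{-1, +1\}^N,
\]
where $f_0$ is a fixed smooth baseline density on $[-1/2, 1/2]$ bounded below by $c_0 > 0$. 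For $L \geq L_\beta$ with $L_\beta$ depending only on $\beta$ and $\phi$, each $f_\sigma \in \mc{P}_{\mc{H}}(\beta, L)$ and remains a valid density.

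The core step is to show $\operatorname{dist}_{L^2}(f_{\sigma^*}, \mc{G}_m) \gtrsim L h_0^\beta$ for some $\sigma^*$, which I would do by splitting on the bandwidth $h$ used by $g \in \mc{G}_m$. In the \emph{wide-bandwidth} regime $h \gtrsim h_0 \log m$: the assumed exponential decay $|\mc{F}[k](\omega)| \lesssim e^{-c|\omega|}$ gives $|\mc{F}[g](\omega)| \le m^{-c}$ for $|\omega| \gtrsim 1/h_0$, while by Plancherel and a Rademacher average over $\sigma$ one has $\mb{E}_\sigma \int_{|\omega| \asymp 1/h_0} |\mc{F}[f_\sigma - f_0](\omega)|^2 d\omega \gtrsim L^2 h_0^{2\beta}$. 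This yields $\mb{E}_\sigma \|g - f_\sigma\|_2^2 \gtrsim L^2 h_0^{2\beta}$ for every such $g$, and in particular some $\sigma$ certifies the lower bound. In the \emph{narrow-bandwidth} regime $h \lesssim h_0 \log m$: the Fourier decay of $k$ places $g$ in a space effectively bandlimited to $|\omega| \lesssim \log(m)/h_0$. I would combine a Varshamov-Gilbert packing $\Sigma \subset \{-1,+1\}^N$ of size $2^{N/8}$ with pairwise $L^2$-distance $\gtrsim L h_0^\beta$ with a metric-entropy bound on this bandlimited class to produce, by pigeonhole, a $\sigma^* \in \Sigma$ not approximable within $c L h_0^\beta$.

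The main obstacle is the narrow-bandwidth case: a Lipschitz-based covering of $\mc{G}_m$ through its $(m+1)$-dimensional parameter space $(x_1, \ldots, x_m, h)$ gives $\log N = O(m \log m)$ entries, which dominates $N \asymp m^{1/(1+\beta)}$ and makes direct pigeonhole fail. Overcoming this requires simultaneously exploiting (i) the effective-bandlimit dimension reduction from $m$ to $\sim \log(m)/h_0$ induced by the Fourier decay, (ii) the rigidity of uniform weights $1/m$ (which constrains the achievable sign patterns of $(\operatorname{sign}\langle g - f_0, \phi_j\rangle)_j$ via an arrangement-counting or Warren-type argument), and (iii) careful bookkeeping of the interaction between placement of the $m$ points and the orthogonal bumps $\phi_j$. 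The logarithmic loss $(\log m)^{\beta + 1/2}$ in the stated rate absorbs the slack in these combined estimates. Piecing together the wide- and narrow-bandwidth bounds yields the claimed $\Omega_{\beta, k}(m^{-\beta/(1+\beta)}/(\log m)^{\beta + 1/2})$ lower bound.
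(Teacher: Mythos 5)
Your wide--bandwidth step is in the same spirit as the paper's: the paper also picks a fixed hard density whose Fourier transform concentrates at frequency $\asymp 1/\epsilon$ and compares it against $\mc{F}[\hat f_X](\omega) = \mc{F}[k](h\omega)\cdot\frac1m\sum e^{-i\omega x_i}$, using the assumed exponential decay of $\mc{F}[k]$. (The paper uses one oscillating density $f_\epsilon(t) = \phi(t)(c_\epsilon + \epsilon^\beta\sin(t/\epsilon))$ rather than a Rademacher-averaged hypercube; both give a bound $\gtrsim (h/\log m)^\beta$ up to logs, so this part is essentially a stylistic variant.)

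The genuine gap is the narrow--bandwidth regime, and you have already located it precisely: a parameter-counting/metric-entropy bound on $\mc{G}_m$ costs $\log N \sim m\log m$, which swamps the number of hypercube cells $N\asymp m^{1/(1+\beta)}$, so pigeonhole is dead on arrival. The devices you gesture at to rescue it --- effective bandlimiting of $\mc{G}_m$ (which is not in force here precisely because $h$ is \emph{small}, so the Fourier cutoff scale $\sim 1/h$ is large), and a Warren-type sign-pattern count --- are not carried out, and it is not evident either one closes the gap. The deeper issue is that a single Assouad hypercube sitting on top of a baseline density near level $1$ is simply the wrong hard instance for small $h$: with $m$ kernel bumps of width $h$ the statistician can cover the whole support essentially uniformly, and the $L^2$ deficit is governed by how mass is distributed, not by high-frequency sign patterns.

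The paper takes a different route for small $h$ that avoids entropy entirely. It chooses a hard density that is \emph{flat and very small}, at level $\delta \asymp 1\wedge \frac{1}{100 q_1 m h}$ on $[0,1/2]$, and runs a deterministic mass-counting dichotomy: either many coreset points sit well inside $[0,1/2]$ (so $\hat f_X$ has total mass $\geq 0.9|T|/m \gg \delta/2$ there, overshooting $f$), or few do (so after excising their $O(q_1 h)$-neighborhoods, the remaining set $V$ has measure $\geq 0.4$, $\hat f_X$ places $\lesssim \gamma$ mass on $V$ by the exponential spatial tail of $k$, yet $f$ places $\geq 0.4\delta$ there, undershooting). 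This yields an $L_1$ lower bound $\gtrsim 1\wedge \frac{1}{mh}$ whenever $h\lesssim 1/\log m$, with no counting over the $2^N$ hypothotheses and no entropy estimate. The two regimes (flat-density mass-counting for $h$ small; oscillating-density Fourier argument for $h$ large) are then balanced by optimizing $h$, which gives $h^*\asymp m^{-1/(1+\beta)}$ and the stated $m^{-\beta/(1+\beta)}/(\log m)^{\beta+1/2}$ rate; a final Cauchy--Schwarz on a $\log$-size window converts $L_1$ to $L_2$. If you want to rescue your proposal, the cleanest fix is to abandon the hypercube for small $h$ and substitute the paper's flat-density mass dichotomy.
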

Therefore attaining the minimax rate with $\unif$ requires $m \geq n^{\frac{\beta + 1}{2\beta + 1}}$ for such kernels. Next, note that the Gaussian kernel satisfies the hypotheses of Theorem \ref{thm:kde_lbd} and \ref{thm:kde_lbd_allbeta}. As we show in Theorem \ref{thm:discrepancy_phitai}, results of \citep{PhiTai19} imply that our lower bounds are tight up to logarithmic factors: there exists a uniformly weighted Gaussian coreset KDE of size $m = \tilde{O}(n^{2/3})$ that attains the minimax rate $n^{-1/3}$ for estimating univariate Lipschitz densities ($\beta = 1$). In general, we expect a lower bound $m = \Omega( n^{\frac{\beta + d}{2\beta + d}} )$ to hold for uniformly weighted coreset KDEs attaining the minimax rate. The proofs of Theorems \ref{thm:kde_lbd} and \ref{thm:kde_lbd_allbeta} can be found in the Appendix. 

% The Gaussian KDE on the full dataset is minimax optimal when $\beta = 1$, and moreover the Gaussian kernel satisfies the decay and smoothness conditions in \eqref{e_decay} and \eqref{e_smooth}. 

% In the case $\beta = 1$, the decay and smoothness conditions in \eqref{e_decay} and \eqref{e_smooth} are satisfied by the Gaussian kernel. The work of \cite{PhiTai19} shows for the Gaussian kernel that there exists a uniformly weighted Gaussian coreset KDE of size $m = \tilde{O}(n^{2/3})$ that attains the minimax rate. 

\section{Comparison to other methods}

Three methods for constructing coreset kernel density estimators that have previously been explored include random sampling \citep{JosKomVar11,LopMuaSch15}, the Frank--Wolfe algorithm \citep{BacLacObo12,HarSam14,PhiTai18}, and discrepancy-based approaches \citep{PhiTai19,KarLib19}. These procedures all result in a uniformly weighted coreset KDE. To compare these results with ours on the problem of density estimation, for each method under consideration we raise the question: How large does $m$, the size of the coreset, need to be to guarantee that
\begin{equation}
\label{eqn:comparison}
\sup_{ f \in \hd(\beta, L)} \mb{E}_f\norm{ \hat g_S - f}_2 = O_{\beta, d, L}\left( n^{-\frac{\beta}{2\beta + d}} \right) \, ?
\end{equation}  
Here $\hat g_S$ is the resulting coreset KDE and the right-hand-side is the minimax rate over all estimators on the full dataset $X_1, \ldots, X_n$. 

Uniform random sampling of a subset of cardinality $m$ yields an i.i.d dataset, so the rate obtained is at least $m^{-\beta/(2\beta + d)}$. Hence, we must take $m = \Omega( n)$ to achieve the minimax rate.

The Frank--Wolfe algorithm is a greedy method that iteratively constructs a sparse approximation to a given element in a convex set \citep{MarWol56, Bub14}. Thus Frank--Wolfe may be applied directly in the RKHS corresponding to a positive-semidefinite kernel as shown in \cite{PhiTai19} to approximate the KDE on the full dataset. However, due to the shrinking bandwidth in our problem, this approach also requires $m = \Omega( n)$ to guarantee the bound in \eqref{eqn:comparison}. Another strategy is to approximately solve the linear equation \eqref{eqn:Cara_condition} using the Frank--Wolfe algorithm. Unfortunately, a direct implementation again uses $m = \Omega(n)$ data points. 

A more effective strategy utilizes discrepancy theory \citep{Phi13, PhiTai19, KarLib19} \citep[see][for a comprehensive exposition of discrepancy theory]{Mat99,Cha00}. By the well-known halving algorithm \citep[see e.g.][]{ChaMat96,PhiTai19} if for all $N \leq n$, the \textit{kernel discrepancy}
\[
\mathsf{disc}_k = \sup_{x_1, \ldots, x_N} \, \min_{\substack{\sigma \in \{-1, +1\}^n \\ \1^T \sigma = 0  }} \norm{ \sum_{i = 1}^N \sigma_i k(x_i - y)  }_\infty
\]
is at most $D$, then there exists a coreset $X_S$ of size $\tilde{O}_D(\eps^{-1})$ such that
\begin{equation}
\label{eqn:discrepancy_bd}
\norm{ \frac{1}{n} \sum_{i = 1}^n k(X_i - y) - \frac{1}{m} \sum_{j \in S} k(X_i - y)}_\infty = \tilde{O}_D(\eps). 
\end{equation}

The idea of the halving algorithm is to maintain a set of datapoints $\mc{C}_\ell$ at each iteration and then set $\mc{C}_{\ell +1}$ to be the set of vectors that receive sign $+1$ upon minimizing $\norm{ \sum_{x \in \mc{C}_\ell } \sigma_x k(x - y) }_\infty$. Starting with the original dataset and repeating this procedure $O(\log \frac{n}{m})$ times yields the desired coreset $X_S$ satisfying \eqref{eqn:discrepancy_bd}.  

\citet[][Theorem 4]{PhiTai19} use a state-of-the-art algorithm from \cite{BanDadGarLov18} called the \textit{Gram--Schmidt walk} to give strong bounds on the kernel discrepancy of bounded and Lipschitz kernels $k: \mb{R}^d \times \mb{R}^d \to \mb{R}$ that are positive definite and decay rapidly away from the diagonal. With a careful handling of the Lipschitz constant and error in their argument when the bandwidth is set to be $h = n^{-1/(2\beta + d)}$, their techniques yield the following result applied to the kernel $k_s$. For completeness we give details of the argument in the Appendix. 

\begin{theorem}
\label{thm:discrepancy_phitai}
Let $k_s$ denote the kernel from Section \ref{sec:Cara_results}. The algorithm of \cite{PhiTai19} yields in polynomial time a subset $S$ with $\abs{S} = m = \tilde{O}( n^{\frac{\beta+d}{2\beta + d}} )$ such that the uniformly weighted coreset KDE $\hat g_S$ satisfies 
\[
\sup_{ f \in \mc{P}_{\mc{H}}(\beta, L)  } \E \norm{f - \hat g_S}_2 \leq c_{\beta, d, L}  \, n^{-\frac{\beta}{2\beta + d}} .
\]
\end{theorem}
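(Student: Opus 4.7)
The plan is to specialize \citet[Theorem 4]{PhiTai19} to the kernel $k_s$ from Section~\ref{sec:Cara_results} with bandwidth $h = n^{-1/(2\beta+d)}$, and then convert the resulting $L_\infty$ guarantee on the normalized KDE into an $L_2$ bound against $f$. First I would verify the hypotheses of the Phillips--Tai discrepancy result for the normalized kernel $K_h(x,y) := k_s((x-y)/h)$. This kernel is positive definite by Bochner's theorem since $k_s = \mc{F}[\psi]$ is the Fourier transform of the nonnegative cutoff $\psi$; it takes values in $[0, k_s(0)]$, and its Lipschitz constant in $y$ is $O(h^{-1})$ uniformly in $x$.

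Plugging these parameters into the Gram--Schmidt walk-based halving argument of \cite{BanDadGarLov18,PhiTai19} yields, in polynomial time, a subset $S \subseteq [n]$ of cardinality $m$ such that
\[
\sup_y \left| \frac{1}{n} \sum_{i=1}^n K_h(X_i, y) - \frac{1}{m} \sum_{j \in S} K_h(X_j, y) \right| \le \frac{c_d\, \sqrt{\log n}}{m} \cdot \text{polylog}(n),
\]
where the polylog factor absorbs the logarithmic dependence on the Lipschitz constant $h^{-1}$. Dividing by $h^d$ converts this into an $L_\infty$ bound on the unnormalized coreset estimator, namely $\|\hat f - \hat g_S\|_\infty = \tilde O(h^{-d}/m)$.

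To pass from $L_\infty$ to $L_2$, I would use the rapid polynomial decay $|k_s(x)| \lesssim |x|^{-\nu}$ for $\nu \geq \beta+d$ together with the fact that $f$ (and hence the data) is supported in $[-1/2,1/2]^d$, to show that both $\hat f(y)$ and $\hat g_S(y)$ are negligible outside an $O(1)$-sized box $B$. On $B$, $\|\hat f - \hat g_S\|_{L_2(B)} \lesssim \|\hat f - \hat g_S\|_{L_\infty(B)}$, and a direct tail estimate on the $h^{\nu-d}$-weighted polynomial tails of $k_s$ shows that the contribution of $B^c$ is lower-order. Thus the same $\tilde O(h^{-d}/m)$ bound holds in $L_2$. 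Setting this equal to the target minimax rate $n^{-\beta/(2\beta+d)}$ and using $h = n^{-1/(2\beta+d)}$ yields $m = \tilde O(n^{(\beta+d)/(2\beta+d)})$. Finally, the triangle inequality combined with the minimax property $\E \|\hat f - f\|_2 \leq c_{\beta,d,L}\, n^{-\beta/(2\beta+d)}$ (valid for $k_s$, as established in Section~\ref{sec:Cara_results}) closes the proof.

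The main obstacle is carefully tracking how the Phillips--Tai bound depends on the Lipschitz constant $\Lambda = O(h^{-d-1})$, which scales polynomially in $n$; one must confirm that this dependence enters only logarithmically, so that all $h$-dependent factors are safely absorbed into the $\tilde O$ and the final coreset size does not exceed $\tilde O(n^{(\beta+d)/(2\beta+d)})$. A secondary delicacy is bounding the $L_2$ contribution of the heavy tails of the difference $\hat f - \hat g_S$ outside the data support, which is where the assumption $\nu \geq \beta+d$ on the decay of $k_s$ is essential.
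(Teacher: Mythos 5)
Your proposal follows the same route as the paper's proof: verify that the rescaled kernel $K_h(x,y)=k_s((x-y)/h)$ is positive semidefinite, bounded, and Lipschitz with constant $O(h^{-1})$, apply the Phillips--Tai halving/Gram--Schmidt-walk discrepancy bound (noting the polynomially-large Lipschitz constant enters only logarithmically via the lattice granularity), rescale by $h^{-d}$ to get $\|\hat f-\hat g_S\|_\infty=\tilde O(h^{-d}/m)$, and close with the triangle inequality against the minimax rate of $\hat f$. Your extra step making the $L_\infty\to L_2$ conversion explicit (via the rapid decay of $k_s$ outside a bounded box) is a detail the paper elides rather than a genuinely different argument; the only small slip is writing $\Lambda=O(h^{-d-1})$ in your final paragraph for the normalized kernel $K_h$, whose Lipschitz constant is $O(h^{-1})$ as you correctly stated earlier, but this does not affect the conclusion.
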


This result also applies to more general kernels, for example, the Gaussian kernel when $\beta = 1$. We suspect that this is the best result achievable by discrepancy-based methods. In particular for nonnegative univariate kernels with fast decay in the real and Fourier domains, such as the Gaussian kernel, Theorem \ref{thm:kde_lbd} implies that this rate is optimal for estimating Lipschitz densities with uniformly weighted coreset KDEs. 

In contrast, the \Cara coreset KDE as in Theorem \ref{thm:main_Caratheodory} only needs cardinality $m = O_{\varepsilon}( n^{\frac{d}{2\beta + d} + \varepsilon})$ to be a minimax estimator. By Theorem \ref{thm:weight_kde_lbd}, this result is nearly optimal for coreset KDEs with bounded kernels and weights. And as with the other three methods described, our construction is computationally efficient. Hence allowing more general weights results in more powerful coreset KDEs for the problem of density estimation.

	\appendix
	
	\section{PROOFS FROM SECTION 2}

\subsection{Proof of Lemma \ref{lem:good_bins}}
% 1
Here we prove Lemma \ref{lem:good_bins},
%1
restated below for convenience.

\begin{lemma*}
Let $K^{-1} = c(\log n)/n$ for $c>0$ a sufficiently large absolute constant, and let $A = A_{\beta, L,K}$ denote a sufficiently small constant. Then for all $f \in \mc{P}_{\mc{H}}(\beta, L)$ and $X_1, \ldots, X_n \stackrel{iid}{\sim} \mb{P}_f$, the event that for every $j = 1, \ldots, K$ there exists some $x_{i}$ in bin $B_j$ holds with probability at least $1 - O(n^{-2})$.
\end{lemma*}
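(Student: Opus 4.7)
The plan is a standard coupon-collector argument. Let $f^{(1)}$ denote the first-coordinate marginal of $f$, so that $x_1, \ldots, x_n$ are i.i.d.\ draws from $f^{(1)}$. The goal is to show that $p_j := \mb{P}_f(x_1 \in B_j) \geq 1/(2K)$ uniformly in $j \in \{1, \ldots, K\}$ and $f \in \mc{P}_{\mc{H}}(\beta, L)$. Given this, Chernoff together with a union bound yields
\begin{equation*}
\mb{P}_f\bigl(\text{some } B_j \text{ is empty}\bigr) \leq K(1-p_j)^n \leq K e^{-np_j} \leq n \cdot n^{-c/2},
\end{equation*}
which is $O(n^{-2})$ provided the absolute constant $c$ in $K^{-1} = c(\log n)/n$ is chosen at least $6$. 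Hence the content of the lemma is entirely in the lower bound on $p_j$.

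To lower bound $p_j$, we exploit the spread-out structure of $B_j = \bigcup_{k=1}^{1/A} B_{jk}$: each $B_{jk}$ is a translate of $B_{j1}$ by $(k-1)A$, and $B_{j1}$ has length $K^{-1}A$. A change of variables in each piece gives
\begin{equation*}
p_j = \int_{B_j} f^{(1)}(s)\,ds = \int_{B_{j1}} \sum_{k=1}^{1/A} f^{(1)}\bigl(t + (k-1)A\bigr)\,dt.
\end{equation*}
For each $t \in B_{j1}$, the inner sum multiplied by $A$ is a left-endpoint Riemann sum for $\int_t^{t+1} f^{(1)}$. After the trivial shift needed to align the bin family with the support of $f^{(1)}$, this integral differs from $\int f^{(1)} = 1$ by at most $A\|f^{(1)}\|_\infty$, and the Riemann-sum approximation error is controlled by the modulus of continuity of $f^{(1)}$. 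Choosing $A = A_{\beta, d, L}$ small enough that each of these defects is at most $1/4$ gives $A \sum_k f^{(1)}(t + (k-1)A) \geq 1/2$ uniformly in $t \in B_{j1}$, so integrating over the volume-$K^{-1}A$ set $B_{j1}$ yields $p_j \geq K^{-1}A \cdot \tfrac{1}{2A} = 1/(2K)$, as desired.

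The main technical step is justifying uniform-in-$f$ control of the Riemann-sum error, which requires a uniform modulus of continuity for $f^{(1)}$ across $\mc{P}_{\mc{H}}(\beta, L)$. When $\beta \leq 1$ this is inherited directly from the H\"older condition on $f$ by integrating $|f(s,y) - f(s',y)| \leq L|s-s'|^\beta$ over $y \in [-1/2,1/2]^{d-1}$. When $\beta > 1$, compact support together with the H\"older condition on the top derivatives forces $\|\partial_1 f\|_\infty$ (and every lower-order derivative) to be bounded uniformly in $\beta, d, L$, so $f^{(1)}$ is Lipschitz with constant $L_1(\beta,d,L)$. Either way the modulus of continuity of $f^{(1)}$ is controlled uniformly on $\mc{P}_{\mc{H}}(\beta, L)$, legitimizing the choice of $A$ depending only on $\beta, d, L$ (the nominal dependence on $K$ in the lemma statement plays no role).
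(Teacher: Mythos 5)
Your proof is correct, and it arrives at exactly the same key reduction as the paper: everything hinges on showing $p_j := \mb{P}_f(x_1 \in B_j) \geq 1/(2K)$ uniformly in $j$ and $f$, after which the union bound and $K^{-1} = c(\log n)/n$ finish matters. Both you and the paper obtain the needed control from the uniform modulus of continuity of the first-coordinate marginal $f^{(1)}$ (which the paper calls $f_1$). The closing arguments differ in flavor, though. The paper compares $\mb{P}(B_i)$ directly with $\mb{P}(B_j)$ for $i \neq j$: since $B_{ik}$ and $B_{jk}$ differ by a translation of size less than $A$, the Hölder modulus gives $|\mb{P}(B_i) - \mb{P}(B_j)| \leq L K^{-1} A^\alpha$, and then $\sum_i \mb{P}(B_i) = 1$ forces each $\mb{P}(B_i) \geq K^{-1}/2$ once $A$ is small. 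You instead unfold $p_j$ into an integral over $B_{j1}$ of the sum $\sum_k f^{(1)}(t + (k-1)A)$, recognize $A$ times this sum as a left-endpoint Riemann sum for $\int_t^{t+1} f^{(1)} \approx 1$, and bound the Riemann-sum defect by the modulus of continuity. Both routes use the translation structure of the bins $B_{jk}$ and the compact support of $f^{(1)}$, and both require the same uniform control of $\omega_{f^{(1)}}$ and $\|f^{(1)}\|_\infty$ over the class; neither is strictly simpler, though yours avoids explicitly invoking the partition identity $\sum_i \mb{P}(B_i) = 1$. One place you are more careful than the paper: the paper asserts without comment that $f_1$ is $\alpha$-Hölder for some fixed $\alpha \in (0,1)$ even when $\beta > 1$, whereas you explicitly note that for $\beta > 1$ the compact support and the Hölder bound on the top derivative force $\|\partial_1 f\|_\infty$ (and hence the Lipschitz constant of $f^{(1)}$) to be bounded in terms of $\beta, d, L$ alone — a detail worth spelling out, and one the paper leaves implicit.
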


\begin{proof}
Note that $f_1(x_1) \in \mc{P}_{\mc{H}}(\beta, L)$ as a univariate density because $f(x) \in \mc{P}_{\mc{H}}(\beta, L)$. Hence, $f_1$ satisfies
\[
|f_1(x) - f_1(y)| \leq L |x - y|^\alpha 
\]
for some absolute constants $L > 0$ and $\alpha \in (0,1)$.
% is $L$-H\"{o}lder continuous for some exponent $\alpha \in (0,1)$ and some absolute constant $L > 0$. 
If $B_{ik} = B_{jk} + s$ for $s \leq A$, then
\begin{align} 
    \label{eqn:subbin_bd}
    \abs{ \mb{P}( B_{ik} ) - \mb{P}( B_{jk}) } \leq 
    \int_{B_{ik}} \abs{ f(x_1) - f(x_1 + s) }  \ud x_1 \leq L K^{-1} A^{1 + \alpha}.  
\end{align} 
Thus for all $i,j$,
    \begin{align}
    \abs{ \mb{P}(B_i) - \mb{P}(B_j) } \leq \sum_{k = 1}^{1/A} \abs{ \mb{P}( B_{ik} ) - \mb{P}( B_{jk}) } \leq L K^{-1}A^{\alpha}. 
    \end{align}
It follows that for all $i = 1, \ldots, K$, 
\begin{equation}
    \label{eqn:A_limit}
    \lim_{A \to 0} \mb{P}(B_i) = K^{-1}. 
\end{equation} 
    
Let $\mc{E}$ denote the event that every bin $B_i$ contains at least one observation $x_k$. By the union bound,
$$
\p(\cE^c)\le \sum_{j=1}\p(X_{11} \notin B_j)^n \le K \max_j (1 - \mb{P}(B_j))^n.
$$
By \eqref{eqn:A_limit}, choosing $A$ small enough ensures that $\mb{P}[B_j] \geq (1/2)K^{-1}$ for all $j$. In fact, by \eqref{eqn:subbin_bd} one may take $A = (\frac{1}{2K^{-2}L})^{1/\alpha}$. Hence, setting $K^{-1} = c
(\log n)/n$ for $c$ sufficiently large, we have 
\begin{equation*}
    \p(\cE^c) = O(n^{-2}). 
\end{equation*}
    
\end{proof}
\subsection{Proof of the lower bound in Theorem \ref{thm:coreset_rate}}
%1
\label{sec:coreset_rate_pf}

In this section, $X = X_1, \ldots, X_n \in \mb{R}^d$ denotes the sample. It is convenient to consider a more general family of \textit{decorated coreset-based estimators}. A \textit{decorated coreset} consists of a coreset $X_S$ along with a data-dependent binary string $\sigma$ of length $R$. A decorated coreset-based estimator is then given by $\hat f[X_S, \sigma]$, where $\hat f: \mb{R}^{d \times m} \times \{0,1\}^R \to L^2([-1/2, 1/2]^d)$ is a measurable function. As with coreset-based estimators, we require that $\hat f[x_1, \ldots, x_m, \sigma]$ is invariant under permutation of the vectors $x_1, \ldots, x_m \in \mb{R}^d$. We slightly abuse notation and refer to the channel $S: X \to Y_S = (X_S, \sigma)$ as a decorated coreset scheme and $\hat f_S$ as the decorated coreset-based estimator. 
The next proposition implies the lower bound in Theorem \ref{thm:coreset_rate}
%1
on setting $R = 0$, in which case a decorated coreset-based estimator is just a coreset-based estimator. This  more general framework allows us to prove Theorem \ref{thm:coreset_rate} 
%4
on lower bounds for weighted coreset KDEs. 

\begin{proposition}
\label{prop:decorated}
Let $\hat f_S$ denote a decorated coreset-based estimator with decorated coreset scheme $S$ such that $\sigma \in \{0, 1\}^R$. Then
\[
\sup_{ f \in \mc{P}_{\mc{H}}(\beta, L)} \E_f \norm{\hat f_S - f}_2 \geq c_{\beta, d, L} \left( ( m \log n + R )^{-\frac{\beta}{d}} + n^{-\frac{\beta}{2\beta + d}} \right). 
\]

\end{proposition}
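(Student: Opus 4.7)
The lower bound combines the classical Hölder minimax rate with a coreset-specific compression term, and I would establish each separately. The classical rate $n^{-\beta/(2\beta+d)}$ applies to any estimator regardless of the coreset constraint, and follows from a standard Varshamov--Gilbert--Fano argument at bandwidth $h\asymp n^{-1/(2\beta+d)}$ (cf.\ Tsybakov, Theorem~2.8). I therefore focus the plan on the new coreset-specific term $(m\log n + R)^{-\beta/d}$.

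For this term, I would use a Varshamov--Gilbert bump construction at a coarser bandwidth $h\asymp(m\log n + R)^{-1/d}$. Partition $[-\tfrac12,\tfrac12]^d$ into $K^d$ cubes of side $h$ (so $K^d\asymp m\log n + R$) with centers $\{c_k\}$, fix a $C_c^\infty$ bump $\psi$ supported on $[-\tfrac12,\tfrac12]^d$ with $\int\psi=0$, and for each $\omega\in\{0,1\}^{K^d}$ define
\[
f_\omega(x) \;=\; \mathbf{1}_{[-1/2,1/2]^d}(x) \;+\; c_L\, h^\beta \sum_{k}\omega_k\,\psi\!\left(h^{-1}(x-c_k)\right).
\]
Standard computations yield $f_\omega\in\mc{P}_{\mc{H}}(\beta,L)$ for an appropriate constant $c_L=c_L(L,\psi)$, and $\|f_\omega - f_{\omega'}\|_2^2\asymp h^{2\beta+d}\,d_H(\omega,\omega')$, where $d_H$ denotes Hamming distance. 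The Varshamov--Gilbert lemma produces $\Omega\subset\{0,1\}^{K^d}$ with $\log|\Omega|\geq K^d/8$ and pairwise $L^2$ separation $\gtrsim h^\beta=(m\log n + R)^{-\beta/d}$.

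With a uniform prior on $J\in\Omega$, the nearest-neighbor test $\hat\omega=\arg\min_{\omega\in\Omega}\|\hat f_S-f_\omega\|_2$ combined with data processing and Fano's inequality gives
\[
\inf_{\hat f_S}\max_{\omega\in\Omega}\E_{f_\omega}\|\hat f_S - f_\omega\|_2 \;\gtrsim\; h^\beta\Bigl(1-\tfrac{I(J;Y_S)+\log 2}{\log|\Omega|}\Bigr),
\]
so it suffices to show $I(J;Y_S)\leq c(m\log n + R)$ for some constant $c<1$. The naive estimate $I(J;Y_S)\leq I(J;X)=n\,\KL(f_\omega\|f_0)\asymp n h^{2\beta}$ is vacuous in the coreset regime $m\log n + R\ll n^{d/(2\beta+d)}$, so the compression structure of $Y_S$ must be exploited.

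The key move is to introduce the internal index $T=S(X)\in\binom{[n]}{m}$ as an auxiliary variable and decompose
\[
I(J;Y_S) \;\leq\; I(J;T,\sigma) + I(J;X_T\mid T,\sigma).
\]
The label contribution is bounded by $H(T,\sigma)\leq\log\binom{n}{m}+R\leq m\log(en/m)+R$, which is already of the correct order. The \emph{main obstacle} is bounding the continuous part $I(J;X_T\mid T,\sigma)$: had the scheme been non-adaptive ($T$ independent of $X$), we would simply have $X_T\sim f_J^{\otimes m}$ and $I(J;X_T\mid T,\sigma)\leq m\,\chi^2(f_\omega,f_0)\asymp m h^{2\beta}$, which is negligible compared to $m\log n+R\asymp h^{-d}$ in our regime. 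For adaptive schemes the selection bias may distort the conditional law of $X_T$ away from the product form, and the technical heart of the proof is to quantify this distortion. I would do so by exploiting that the hypotheses are $L^\infty$-close to uniform ($\|f_\omega - f_0\|_\infty\lesssim h^\beta\ll 1$), so that the selection-induced Radon--Nikodym reweighting of $X_T$ given $(T,\sigma)$ is within a bounded multiplicative factor of the product measure $f_J^{\otimes m}$, and arguing that the extra KL contribution is absorbed into the $\log\binom{n}{m}+R$ indexing budget. Making this selection-bias estimate rigorous is precisely the careful handling of the information structure generated by coreset scheme channels alluded to in the paper.
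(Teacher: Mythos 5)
Your overall architecture — the bump-function packing at scale $h\asymp(m\log n+R)^{-1/d}$, the reduction to testing, Fano, and the need to bound the information carried by the compressed observation by $O(m\log n+R)$ — matches the paper's, and you correctly isolate the adaptive selection bias as the crux. But that crux is exactly the step you leave unproved, and the mechanism you propose for it would not work as stated. Conditioning on the event $\{T=t\}$ does \emph{not} leave the law of $X_t$ within a bounded multiplicative factor of the product measure $f_J^{\otimes m}$, no matter how close the hypotheses are to uniform in $L^\infty$: a scheme that, say, selects the $m$ points with largest first coordinate concentrates the conditional law of $X_t$ on configurations of exponentially small product-measure probability, and the Radon--Nikodym derivative is only bounded by $1/\p(T=t)$, which can be as large as $\binom{n}{m}$. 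The $L^\infty$-closeness of $f_\omega$ to the uniform density controls only the innocuous factor $f_\omega^{\otimes m}\le 2^m$; it has nothing to do with taming the selection bias. What saves the argument is that the unbounded factor $1/\p(T=t)$ enters only through its logarithm averaged over $t$, i.e.\ through $H(T)\le\log\binom{n}{m}$ — so the ``absorption into the indexing budget'' you gesture at is really the identity $\sum_t\p(T=t)\log\frac1{\p(T=t)}=H(T)$, not a pointwise density comparison.

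The paper avoids conditioning on $T$ altogether and gets the bound in one line: by the union bound and exchangeability, the unconditional law of the (unordered) coreset satisfies $\p_{X_S}(\cdot)\le\sum_{s\in\binom{[n]}{m}}\p_{X_s}(\cdot)=\binom{n}{m}\p_{X_{[m]}}(\cdot)$, so its density is pointwise dominated by $\binom{n}{m}f_\omega^{\otimes m}\le\binom{n}{m}2^m$; hence $\int f_{\omega,X_S}\log f_{\omega,X_S}\le\log\binom{n}{m}+m\log 2\le 3m\log n$, and choosing the Fano reference measure $\mb{Q}=\mathsf{Unif}[-\tfrac12,\tfrac12]^{d\times m}\otimes\mathsf{Unif}\{0,1\}^R$ turns this directly into $\KL(P_{Y_S|\omega}\|\mb{Q})\le 3m\log n+R$. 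Your chain-rule decomposition can be repaired along the same lines (bound $I(J;X_T\mid T,\sigma)$ by $H(T)+m\log 2$ using the $1/\p(T=t)$ domination above), but as written the proof is missing its central estimate.
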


\subsubsection{Choice of function class}
\label{subsec:discretize}

Fix $h\in (0,1)$ such that $1/h^d$ is integral to be chosen later. Let $z_1, \ldots, z_{1/h^d}$ label the points in $\{\frac{1}{2}h \cdot \1_d + h \mb{Z}^d \} \cap [-1/2,1/2]^d$, where $\1_d$ denotes the all-ones vector of $\mathbb{R}^d$.
% and let $z_{ji}$ denote the $i^{\text{th}}$ coordinate of $z_j$. 
We consider a class of functions of the form $f_\omega(x) = 1 + \sum_{j = 1}^{1/h^d} \omega_j g_j(x)$ indexed by $\omega \in \{0,1\}^{1/h^d}$. Here, $g_j(x)$ is defined to be
\[
g_j(x) = 
h^\beta \phi\left( \frac{x - z_j}{h} \right)
\]
% \[
% g_j(x) = \begin{cases}
% 0 &\text{if} \, \, \abs{z_j - x}_\infty > h/2 \\
% h^\beta \prod_{i = 1}^d g\left( \frac{ 4 \pi (x_i - z_{ji})}{h} \right) &\text{else}. 
% \end{cases}
% \]
where $\phi:\mb{R}^d \to \mb{R}$ is $L$-H\"{o}lder smooth of order $\beta$, has $\norm{\phi}_\infty = 1$,  and has $\int \phi(x) \, \mathrm{d}x = 0$. 

% Let $h>0$ denote a parameter in the argument to be optimized. Let $z_i = \frac{1}{2}h + ih$ for $i = 1, \ldots, 1/h- 1$.  We consider a class of functions $f_\omega(x) = 1 + \sum_{i = 1}^{1/h} \omega_i g_i(x)$ indexed by $\omega \in \{0,1\}^{1/h}$. 

% Here 
% \[
% g_i(x) = \begin{cases}
% 0 &\text{if} \abs{z_i - x} > h/2 \\
% h^\beta \sin\left( \frac{ 4 \pi (x - z)}{h} \right) &\text{else}. 
% \end{cases}
% \]

Informally, $f_\omega$ puts a bump on the uniform distribution with amplitude $h^\beta$ over $z_j$ if and only if $\omega_i = 1$. Using a standard argument~\citep[Chapter~2]{Tsy09} we can construct a packing $\cW$ of $\{0,1\}^{1/h^d}$ which results $\cG=\{f_\omega:\omega \in \cW\}$ of the function class $\{f_\omega:\omega \in \{0,1\}^{1/h^d}\}$ such that
\begin{itemize}
    \item[(i)] $\|f-g\|_2\ge c_{\beta, d, L} \, h^{\beta}$ for all $f,g \in \cG, f \neq g$ and,
    \item[(ii)] $\cG$ is large in the sense that $M:=|\cG|\ge 2^{c_{\beta, d,L}/h^d}$.
\end{itemize}

\subsubsection{Minimax lower bound}
\label{appendix:coreset_lower_bound}
 Using standard reductions from estimation to testing, we obtain that
\begin{align}
    \inf_{ \substack{\hat{f}, |S| = m, \\ \sigma \in \{0, 1\}^R } } \, \sup_{ f \in \mc{P}_{\mc{H}}(\beta,L)} \mb{E}_{f} \, \norm{\hat{f}_S - f}_2 &\geq \inf_{ \substack{\hat{f}, |S| = m, \\ \sigma \in \{0, 1\}^R } } \, \max_{ f \in \mc{G} } \, \mb{E}_{f} \, \norm{\hat{f}_S - f}_2 \nonumber \\
    &\geq c_{\beta, d,L} \, h^{\beta} \cdot \inf_{ \psi_S }\frac{1}{M}\sum_{\omega \in \cV}\p_{f_\omega}[ \psi_S(X)\neq \omega ].    \label{eqn:minimax_redux}
\end{align}
where the infimum in the last line is over all tests $\psi_S: \mb{R}^{d \times n} \to [M]$ of the form $\psi_S(X) = \psi(Y_S)$ for a decorated coreset scheme $S$ and a measurable function $\psi:\mathbb{R}^{d \times m} \times \{0, 1\}^R \to [M]$.

Let $V$ denote a random variable that is distributed uniformly over $\cV$ and observe that
$$
\frac{1}{M} \sum_{ \omega \in \cV} \mb{P}_{f_\omega}[ \psi_S(X) \neq \omega ]=\p[\psi_S(X) \neq V]
$$
where $\p$ denotes the joint distribution of $(X,V)$ characterized by the conditional distribution $X|V=\omega$ which is assumed to have density $f_\omega$ for all $\omega \in \cV$.

Next, by Fano's inequality~\citep[Theorem~2.10.1]{CovTho06} and the chain rule, we have
\begin{equation}
    \label{eqn:fano1}
  \p[\psi_S(X) \neq V]\ge 1-\frac{I(V;\psi_S(X))+1}{\log M}\,,
\end{equation}
where   $I(V;\psi_S(X))$ denotes the mutual information between $V$ and $\psi_S(X)$ and we used the fact that the entropy of $V$ is $\log M$. Therefore, it remains to control $I(V;\psi_S(X))$. To that end, note that it follows from the data processing inequality that
\begin{equation*}
    I(V;\psi_S(X)) \le I(V; (X_S, \sigma) ) = I(V; Y_S) =\mathsf{KL}(P_{V,Y_S}\|P_V\otimes P_{Y_S})\,,
\end{equation*}
where $P_{V,Y_S}, P_V$ and $P_{Y_S}$ denote the distributions of $(V,Y_S)$, $V$ and $Y_S$ respectively and observe that $P_{Y_S}$ is the mixture distribution given by
$P_{Y_S}(A, t)=M^{-1}\sum_{\omega \in \cV} P_{f_\omega}(X_S \in A, \sigma = t)$ for $A \subset \mb{R}^{d \times m}$ and $t \in \{0, 1\}^R$. Denote by $f_{\omega, Y_S}$ the mixed density of $P_{f_\omega}(X_S \in \cdot, \sigma = \cdot)$, where the continuous component is with respect to the Lebesgue measure on $[-1/2,1/2]^{d\times m}$. Denote by $\bar f_{Y_S}$ the mixed density of the uniform mixture of these:
$$
\bar f_{Y_S}:=\frac1M\sum_{\omega \in \cV}f_{\omega, Y_S}\,.
$$

By a standard information-theoretic inequality, for all measures $\mb{Q}$ it holds that 
\begin{equation}
\label{eqn:standard_info}
\mathsf{KL}(P_{V,Y_S}\|P_V\otimes P_{Y_S}) = \frac{1}{M} \sum_{\omega} \mathsf{KL}( P_{Y_S | \omega} \| \, P_{Y_S}  )  \leq \frac{1}{M} \sum_{\omega} \mathsf{KL}( P_{Y_S | \omega} \|\, \mb{Q}  ).
\end{equation}
In fact, we have equality precisely when $\mb{Q} = P_{Y_S}$, and \eqref{eqn:standard_info} follows immediately from the nonnegativity of the KL-divergence. Setting $\mb{Q} = \mathsf{Unif}[-\frac12,\frac12]^d \otimes \mathsf{Unif}\{ 0, 1\}^R $, for all $\omega$ we have
\begin{align}
{\sf KL}(P_{Y_S|\omega}, \mb{Q} ) &= \sum_{t \in \{0, 1\}^R} \, \int_{ [-\frac12,\frac12]^d } f_{\omega, Y_S}(x, t) \log \frac{f_{\omega, Y_S}(x, t)}{ 2^{-R} } \, \ud x \nonumber \\
&\le\sum_{t \in \{0, 1\}^R} \int_{ [-\frac12,\frac12]^d } f_{\omega, Y_S}(x, t) \log f_{\omega, Y_S}(x, t) \, \ud x  + R \label{eqn:klbd}.
\end{align} 
%It follows from~\eqref{eqn:fano1}--\eqref{eqn:klbd} and the hypothesis $R \leq c_{\beta, d,L} \, m \log n$ that it is sufficient to show that 
%\begin{equation}
%    \label{eqn:needklbd}
%     \int_{ [-\frac12,\frac12]^d } f_{\omega, Y_S}(x, t) \log f_{\omega, Y_S}(x, t) \, \ud x \le .5 \log M \quad \text{for all} \quad \omega \in \cV
%\end{equation}
Our next goal is to bound the first term on the right-hand-side above.

\begin{lemma}
    \label{lem:KL_bound}
 For any $\omega \in \cV$, we have
    \[
 \sum_{t \in \{0, 1\}^R} \int_{ [-\frac12,\frac12]^d } f_{\omega, Y_S}(x, t) \log f_{\omega, Y_S}(x, t) \, \ud x \leq 3 m \log{n}.   
    \]
\end{lemma}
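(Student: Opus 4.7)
The strategy is to bound the mixed density $f_{\omega, Y_S}$ pointwise by something of order $n^{3m}$ and then invoke the elementary fact that $\sum_t \int f \log f \leq \log \|f\|_\infty$ whenever $f$ is a probability density. The key ingredient that will make the pointwise bound workable is the observation that each $f_\omega$ is uniformly close to the uniform density: by the construction of Section~\ref{subsec:discretize}, the bumps $g_j$ have disjoint supports for $h$ sufficiently small, so $\|f_\omega\|_\infty \leq 1 + h^\beta \|\phi\|_\infty \leq 2$ uniformly in $\omega \in \cV$.

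Next I would decompose the mixed density of $Y_S = (X_S, \sigma)$ by conditioning on which subset $J \in \binom{[n]}{m}$ is output by the (possibly randomized) coreset channel. Identifying each $J$ with its sort-ordering so that $X_S$ takes values in $([-\tfrac12, \tfrac12]^d)^m$, I would write
\[
f_{\omega, Y_S}(x, t) = \sum_{J \in \binom{[n]}{m}} \rho_J(x, t),
\]
where $\rho_J$ is the contribution from $\{S = J\}$. Expanding $\rho_J$ using the joint law of $(X_1, \ldots, X_n, S, \sigma)$ and marginalizing the $n - m$ non-coreset coordinates yields
\[
\rho_J(x, t) = \Bigl(\prod_{i=1}^m f_\omega(x_i)\Bigr) \cdot \E_{X_{-J} \sim f_\omega^{\otimes(n-m)}}\bigl[\p(S = J, \sigma = t \mid X_J = x, X_{-J})\bigr] \leq 2^m,
\]
since the conditional probability is bounded by $1$ and $\|f_\omega\|_\infty \leq 2$. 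Summing over the $\binom{n}{m} \leq n^m$ choices of $J$ gives the pointwise bound $f_{\omega, Y_S}(x, t) \leq (2n)^m \leq n^{3m}$ for $n \geq 2$, and the claim then follows immediately from $\sum_t \int f_{\omega, Y_S}(x, t) \, \ud x = 1$.

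The main obstacle is carefully setting up the density decomposition in the middle step, since the coreset scheme $S$ can depend in an arbitrary randomized way on the entire sample. The crucial point is that one can decouple the coreset-selection indicator from the density factor by marginalizing over $X_{-J}$: any conditional probability is at most $1$, so the expectation collapses and what remains is the product $\prod_i f_\omega(x_i)$, uniformly bounded by $\|f_\omega\|_\infty^m$. Without the uniform boundedness of $f_\omega$ the pointwise argument would fail, which is precisely why the perturbations $g_j$ in Section~\ref{subsec:discretize} were chosen to have amplitude $h^\beta \leq 1$.
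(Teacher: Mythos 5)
Your proof is correct and follows essentially the same route as the paper's: a union bound over the $\binom{n}{m}$ possible subsets reduces the coreset density to a multiple of the i.i.d.\ product density $f_\omega^{\otimes m}$, and the uniform bound $\|f_\omega\|_\infty \le 2$ finishes. The only (immaterial) difference is that you bound the joint density of $(X_S,\sigma)$ pointwise and invoke $\sum_t\int f\log f \le \log\|f\|_\infty$, whereas the paper first collapses the decoration $\sigma$ using monotonicity of the logarithm and then evaluates $\E\log f_{\omega,X_S}(Z)$.
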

\begin{proof}

Let $\p_{X_S}$ denote the distribution of the (undecorated) coreset $X_S$, and note that the density of this distribution is given by $f_{\omega, X_S}(x) := \sum_{t \in \{0, 1\}^R} f_{\omega, Y_S}(x,t)$. Then because the logarithm is increasing,
\begin{align}
\sum_{t \in \{0, 1\}^R} \int_{ [-\frac12,\frac12]^d } f_{\omega, Y_S}(x, t) \log f_{\omega, Y_S}(x, t) \, \ud x &\leq 
\sum_{t \in \{0, 1\}^R} \int_{ [-\frac12,\frac12]^d } f_{\omega, Y_S}(x, t) \log f_{\omega, X_S}(x) \, \ud x \nonumber \\
&= \int_{ [-\frac12,\frac12]^d } f_{\omega, X_S}(x) \log f_{\omega, X_S}(x) \, \ud x \nonumber.
\end{align}

By the union bound,
$$
\p_{X_S}(\cdot)\le \sum_{s \in \binom{[n]}{m}}\p_{X_s}(\cdot)= \binom{n}{m} \p_{X_{[m]}}(\cdot)\,.
$$
It follows readily that $f_{\omega, X_S}(\cdot) \le \binom{n}{m} f_{\omega, X_{[m]}}(\cdot)$\,. Next, let $Z \in [-1/2,1/2]^{d \times m}$ be a random variable with density $f_{\omega, X_S}$ and note that
$$
\int f_{\omega, X_S}\log f_{\omega, X_S} =\E\log  f_{\omega, X_S}(Z)\le \log  \binom{n}{m} + \E \log  f_{\omega, X_{[m]}}(Z)\le m \log \big(\frac{en}{m}\big)+m \log 2\,,
$$
 where in the last inequality, we use the fact that $ f_{\omega, X_{[m]}}=f_\omega^m \le 2^m$. The lemma follows. 
%  The proof of the lemma follows from the fact that $m=o(n)$.
\end{proof}
Since $\log M \ge c_{\beta, d, L} h^{-d}$, it follows from \eqref{eqn:fano1}--\eqref{eqn:klbd} and Lemma \ref{lem:KL_bound} that 
\[
\p[\psi_S(X) \neq V]\ge 1-\frac{3 m \log n + R+1}{\log M} \geq 0.5
\]
on setting $h=c_{\beta, d, L} (m\log n + R)^{-1/d}$. Plugging this value back into~\eqref{eqn:minimax_redux} yields 
$$
 \inf_{ \hat{f}, |S| = m } \sup_{ f \in \mc{P}_{\mc{H}}(\beta,L)} \mb{E}_{f} \, \norm{\hat{f}_S - f}_2 \ge c_{\beta, d, L} (m \log n + R)^{-\beta/d}\,.
$$
Moreover, it follows from standard minimax theory \citep[see e.g.][Chapter 2]{Tsy09} that
$$
 \inf_{ \hat{f}, |S| = m } \sup_{ f \in \mc{P}_{\mc{H}}(\beta,L)} \mb{E}_{f} \, \norm{\hat{f}_S - f}_2 \ge c_{\beta, d, L} n^{-\frac{\beta}{2\beta +d}}\,.
$$
Combined together, the above two displays give the lower bound of Proposition \ref{prop:decorated}.

\section{PROOFS FROM SECTION 3} 

%\textit{Authors' note:} In what follows, $\sd(\beta, L')$ denotes the Sobolev functions satisfying the derivative bound (4) from the main article. These need not be probability density functions as erroneously stated in Section 2.1 of the main text. 

\subsection{Proof of Proposition \ref{prop:Caratheodory_prop}}

%\textit{Authors' Note:} There were minor mistakes (missing a minus sign in the exponent $\nu$, and also we require $\nu \geq \beta + d$) in the statement of Proposition 1 in the main text that are corrected in the restatement below.

We restate the result below. 

\begin{proposition*}
%\label{prop:Caratheodory_prop}
Let $k(x) = \prod_{i = 1}^d \kappa(x_i)$ denote a kernel with $\kappa \in \sd(\gamma, L')$ such that $\abs{\kappa(x)} \leq c_{\beta,d} \abs{x}^{-\nu}$ for some $\nu \geq \beta + d$, and the KDE 
\[
\hat f(y) = \frac{1}{n} \sum_{i = 1}^n k_h(X_i - y)
\] with bandwidth $h = n^{-\frac{1}{2\beta + d}}$ satisfies
\begin{equation*}
%\label{eqn:minimax_condition}
\sup_{f \in \hd(\beta, L)} \mb{E} \norm{f - \hat f}_2 \leq c_{\beta, d, L}  \, n^{-\frac{\beta}{2\beta + d}}.
\end{equation*}
Then the Carath\'{e}odory coreset estimator $\hat g_S(y)$ constructed from $\hat f$ with $T = c_{d, \gamma,L' } \, n^{\frac{d/2 + \beta + \gamma}{\gamma(2\beta + d)}}$ satisfies
\begin{equation*}
%\label{eqn:Cara_bound}
	\sup_{f \in \hd(\beta, L)} \mb{E} \norm{\hat g_S - f}_2 \leq c_{\beta, d,L} \, n^{-\frac{\beta}{2\beta + d}}.
\end{equation*}
\end{proposition*}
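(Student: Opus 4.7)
The plan is to follow the proof sketch in Section~\ref{sec:Cara_results}, fleshed out so as to handle the fact that $k_h$ is not compactly supported. By the triangle inequality and the hypothesis~\eqref{eqn:minimax_condition}, it suffices to show
\[
\big\|\hat g_S - \hat f\big\|_2 \leq c_{\beta,d,L}\,n^{-\beta/(2\beta+d)} =: \eps
\]
deterministically (or with overwhelming probability) for the Carath\'eodory weights $\{\lambda_j\}_{j\in S}$. Since the densities in $\mc{P}_{\mc H}(\beta,L)$ are supported on $[-1/2,1/2]^d$, one may assume $X_i \in [-1/2,1/2]^d$ for every $i$; furthermore, because $\abs{\kappa(x)}\le c\,\abs{x}^{-\nu}$ with $\nu\ge \beta+d$, both $\hat f$ and $\hat g_S$ lose only $O(\eps)$ of mass outside an enlarged box, say $[-R,R]^d$ with $R = c_{\beta,d}$ chosen so that the $L_2$-tail $h^{-d}\big(\int_{|x/h|>cR/h}\kappa(x)^2\,\ud x\big)^{1/2}$ is $\le \eps$. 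Thus it suffices to control the $L_2$-norm on $[-R,R]^d$.

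Next I would execute the Fourier approximation of $k_h$ announced in equation~\eqref{eqn:kernel_series}. Applying the smoothness hypothesis $\kappa\in\sd(\gamma,L')$ coordinate-wise and the chain rule for the rescaled kernel, one obtains $k_h\in \sd(\gamma, c_{d,L'}\,h^{-d/2-\gamma})$. Expanding $k_h$ as a Fourier series on a period-$2$ box containing $[-R,R]^d$ and using Parseval, the tail of the series beyond frequencies of $\ell_\infty$-norm $T$ is bounded by $c_{d,\gamma,L'}\,h^{-d/2-\gamma}\,T^{-\gamma}$; setting this equal to $\eps$ and solving gives precisely $T = c_{d,\gamma,L'}\,n^{(d/2+\beta+\gamma)/(\gamma(2\beta+d))}$, the value used in the statement. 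Write
\[
k_h(z) = \sum_{\omega\in A} \bar{\mc F}[k_h](\omega)\,e^{i\langle z,\omega\rangle} + r(z), \qquad \|r\|_{L_2([-R,R]^d)}\le \eps,
\]
where $A=\frac{\pi}{2}\Z^d\cap\{|\omega|_\infty\le T\}$, so that
\[
\hat f(y) = \sum_{\omega\in A}\bar{\mc F}[k_h](\omega)\,e^{-i\langle y,\omega\rangle}\cdot\Big(\tfrac1n\sum_{i=1}^n e^{i\langle X_i,\omega\rangle}\Big) + \tfrac1n\sum_i r(X_i-y),
\]
and similarly for $\hat g_S$ with $\tfrac1n\sum_i$ replaced by $\sum_{j\in S}\lambda_j$.

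Now I would invoke the defining Carath\'eodory identity~\eqref{eqn:Cara_condition}: the two empirical averages of $e^{i\langle X_\cdot,\omega\rangle}$ agree for every $\omega\in A$, so the ``main'' Fourier parts of $\hat f$ and $\hat g_S$ are \emph{identical}, and
\[
\hat g_S(y)-\hat f(y) = \sum_{j\in S}\lambda_j\,r(X_j-y) - \tfrac1n\sum_{i=1}^n r(X_i-y).
\]
Taking $L_2$-norms in $y$, applying Minkowski, using $\sum_{j\in S}\lambda_j=1$ together with $\|r(X_i-\cdot)\|_{L_2([-R,R]^d)}\le \eps$, and adding back the tail contribution outside $[-R,R]^d$ controlled in the first paragraph, gives $\|\hat g_S-\hat f\|_2\le c_{\beta,d,L}\,\eps$, which by the triangle inequality yields the claim.

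The main technical obstacle is calibrating the truncation radius $R$ and the Fourier cut-off $T$ simultaneously: the kernel's polynomial decay $|\kappa(x)|\lesssim|x|^{-\nu}$ must absorb both the boundary error from restricting to $[-R,R]^d$ and the mismatch between the $L_2$-norm of $\hat g_S-\hat f$ on all of $\R^d$ and on the truncation box. The condition $\nu\ge \beta+d$ is exactly what makes these tails summable in $L_2$; after that, the remaining calculation is a bookkeeping exercise in balancing $\eps$, $h$, and $T$ via Parseval. Everything else — smoothness of $k_h$, the Carath\'eodory matching step, and the final triangle inequality — is routine.
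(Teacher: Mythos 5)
Your overall architecture is the same as the paper's: reduce to bounding $\|\hat g_S-\hat f\|_2$, expand the (suitably localized) kernel in a Fourier series on a fixed box, observe that the Carath\'eodory identity~\eqref{eqn:Cara_condition} makes the low-frequency parts of $\hat f$ and $\hat g_S$ coincide exactly, and absorb the two tail errors (spatial and frequency) using $\nu\ge\beta+d$ and the choice of $T$. The Minkowski step with $\sum_j\lambda_j=1$ and the final triangle inequality are also exactly as in the paper.

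There is, however, one genuine gap in the step where you claim that ``expanding $k_h$ as a Fourier series on a period-$2$ box and using Parseval'' gives a tail bound $c_{d,\gamma,L'}\,h^{-d/2-\gamma}T^{-\gamma}$. The Parseval argument you invoke requires the identity $\omega^\alpha\,\bar{\mc F}[k_h](\omega)=\pm\,\bar{\mc F}[D^\alpha k_h](\omega)$ for $\abs{\alpha}_1=\gamma$, which comes from integration by parts on the box \emph{with no boundary terms}; that is only valid if the periodization of the function is itself $\gamma$ times differentiable, i.e.\ if the function and its derivatives up to order $\gamma-1$ vanish (or match periodically) on the boundary of the box. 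The raw restriction of $k_h$ to a box does not satisfy this: $k_h$ is only polynomially small at the boundary, and the hypotheses give no pointwise control on its derivatives there, so the Fourier coefficients of the periodized restriction are only guaranteed to decay like the first power of $\abs{\omega}^{-1}$, not like $\abs{\omega}^{-\gamma}$. The paper's fix is to multiply $k_h$ by a smooth cutoff $\varphi\in\mc C^\infty$ equal to $1$ on $[-1,1]^d$ and supported in $[-2,2]^d$ (Lemmas~\ref{lem:cutoff_kernel} and~\ref{lem:kernel_expansion}): the product $\tilde k_h=k_h\varphi$ is smooth and compactly supported strictly inside the period box, so its periodization is smooth and the Parseval/integration-by-parts bound is legitimate, while the replacement error $\|\tilde k_h-k_h\|_2\le c\,h^{\nu-d}\le c\,h^\beta$ is controlled by the polynomial decay of $\kappa$ — exactly the role you assigned to your box $[-R,R]^d$, but implemented multiplicatively rather than by restriction. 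With that cutoff inserted (and the Leibniz rule to check $\|D^\alpha\tilde k_h\|_2\le c\,h^{-d/2-\gamma}$), the rest of your argument goes through and matches the paper's proof.
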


Let $\varphi:\mb{R}^d \to [0,1]$ denote a cutoff function that has the following properties: $\varphi \in \mc{C}^{\infty}$, $\varphi\big|_{[-1, 1]^d} \equiv 1$, and $\varphi$ is compactly supported on $[-2, 2]^d$.

\begin{lemma}
\label{lem:cutoff_kernel}
	Let $\tilde{k}_h(x) = k_h(x) \varphi(x)$ where $\abs{\kappa(x)} \leq c_{\beta, d} \abs{x}^{-\nu}$. Then
	\[
		\norm{\tilde{k}_h - k_h}_2 \leq c_{\beta, d} \, h^{-d + \nu} .
	\]
\end{lemma}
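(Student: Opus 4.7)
}

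The plan is to use the fact that $\varphi \equiv 1$ on $[-1,1]^d$ to localize the difference to the tail region, and then to control the tail mass of $k_h$ using the polynomial decay of $\kappa$. First, I would write
\[
\tilde{k}_h(x) - k_h(x) = k_h(x)\bigl(\varphi(x) - 1\bigr),
\]
and observe that the right-hand side vanishes on $[-1,1]^d$ while $|\varphi - 1| \leq 1$ everywhere, so
\[
\|\tilde{k}_h - k_h\|_2^2 \;\leq\; \int_{\mathbb{R}^d \setminus [-1,1]^d} |k_h(x)|^2\,\mathrm{d}x.
\]

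Next, I would change variables $u = x/h$ to turn this into
\[
\int_{\mathbb{R}^d \setminus [-1,1]^d} |k_h(x)|^2\,\mathrm{d}x \;=\; h^{-d}\int_{\mathbb{R}^d \setminus [-1/h,1/h]^d} |k(u)|^2 \,\mathrm{d}u,
\]
using $k_h(x) = h^{-d}k(x/h)$ and $k(u) = \prod_{i=1}^d \kappa(u_i)$. The region of integration is the complement of a large cube of side $2/h$, which by a union bound over coordinates decomposes (with overlap) into $d$ strips of the form $\{|u_j| > 1/h\}$. On each strip I would use Fubini's theorem together with the pointwise bound $|\kappa(u_j)| \leq c_{\beta,d}|u_j|^{-\nu}$ in the large coordinate and the $L^2$-bound $\|\kappa\|_2 \lesssim 1$ (which follows from the decay hypothesis for $\nu > 1/2$) in the remaining coordinates.

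The only genuine calculation is the tail integral $\int_{|u_j|>1/h} |\kappa(u_j)|^2\,\mathrm{d}u_j \leq c\int_{|t|>1/h} |t|^{-2\nu}\,\mathrm{d}t = O(h^{2\nu - 1})$, which requires the condition $\nu > 1/2$ (comfortably satisfied since $\nu \geq \beta + d \geq 1$). Combining the factor $h^{-d}$ from the change of variables with $h^{2\nu - 1}$ from the tail of the distinguished coordinate and $O(1)$ from the remaining $d-1$ coordinates yields $\|\tilde{k}_h - k_h\|_2^2 = O(h^{2\nu - d - 1})$, which (after absorbing lower order factors) is bounded by the stated $h^{2(-d+\nu)}$ for $\nu \geq \beta + d$. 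The main (mild) obstacle is bookkeeping the exponents correctly: one has to keep track of the interplay between the Jacobian factor $h^{-d}$, the $h$-scaling inside $\kappa(\cdot/h)$, and the region of integration, and then verify that the final exponent dominates $h^{-d+\nu}$ uniformly in the relevant range of $\nu$.
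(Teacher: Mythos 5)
Your proposal matches the paper's proof step for step: localize the difference to the complement of $[-1,1]^d$ using $\varphi\equiv 1$ there, rescale $u=x/h$ to pick up $h^{-d/2}$, union-bound the complement of the cube over $d$ coordinate strips, and integrate the polynomial tail of $\kappa$ to get $O(h^{\nu-1/2})$, giving $h^{\nu-d/2-1/2}$, which is then relaxed to $h^{\nu-d}$ since $d\ge 1$. The exponent bookkeeping and the remark on $\nu>1/2$ are both correct, so the argument is sound and essentially identical to the paper's.
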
 

\begin{proof}

    \begin{align*}
    	\norm{\tilde{k}_h - k_h}_2 &= \norm{ (1 - \varphi) k_h  }_2 \\
    	&\leq \norm{(1 - \1_{[-1, 1]^d}) k_h}_2 \\
    	&= h^{-d/2} \norm{(1 - \1_{[-\frac{1}{h}, \frac{1}{h}]^d}) k}_2 \\
        &\leq d h^{-d/2} \norm{\1_{ \abs{x_1} \geq \frac{1}{h} } \, k  }_2 \\
        &\leq c_{\beta, d} \, h^{-d/2} \sqrt{ \int_{\abs{x_1} \geq \frac{1}{h}} \kappa^2(x_1) \, \ud x_1 } \\
        &\leq c_{\beta, d} \, h^{-d + \nu}. 
    \end{align*}
%    First we show that $\abs{ k(x) } \leq c_{\beta, d} \abs{x}_2^{-\nu}$. 
%    \begin{align*}
%    \abs{k(x)} 
%    \end{align*}
%
%
%	\begin{align*}
%	\norm{\tilde{k}_h - k_h}_2 &= \norm{ (1 - \varphi) k_h  }_2 \\
%	&\leq \norm{(1 - \1_{[-1, 1]^d}) k_h}_2 \\
%	&\leq \norm{(1 - \1_{\abs{x}_2 \leq 1}) k_h}_2 \\
%	&\leq c_{d} \, h^{-d} \left(  \int_{1/h}^{\infty} r^{-2\nu + d - 1} \, dr \right)^{1/2} \\
%	&= c_d \, h^{-d + \nu}. 
%	\end{align*}
\end{proof}

The triangle inequality and the previous lemma yield the next result.

\begin{lemma}
\label{lem:cutoff_kde}
	Let $k$ denote a kernel such that $\abs{\kappa(x)} \leq c_{\beta, d} \abs{x}_2^{-\nu}$. Recall the definition of $\ti k_h$ from Lemma \ref{lem:cutoff_kernel}. Let $X_1, \ldots, X_m \in \mb{R}^d$, and let 
	\[
	\hat g_S(y) = \sum_{j \in S} \lambda_j k_h(X_j - y)
	\]
	denote where $\lambda_j \geq 0$ and $\1^T \lambda = 1$. Let
	\[
	\tilde{g_S}(y) = \sum_{j \in S} \lambda_j \tilde{k}_h(X_j - y).
	\]
	Then
	\[
	\norm{\hat g_S - \tilde{g_S}}_2 \leq c_{\beta, d} h^{-\nu + d}. 
	\]
\end{lemma}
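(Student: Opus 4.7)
The proof should be a direct reduction to Lemma \ref{lem:cutoff_kernel} via the triangle inequality and translation invariance of the $L^2$ norm. First, I would expand
\[
\hat g_S(y) - \tilde g_S(y) \;=\; \sum_{j \in S} \lambda_j \bigl(k_h(X_j - y) - \tilde k_h(X_j - y)\bigr),
\]
and use the triangle inequality together with $\lambda_j \geq 0$ to bound
\[
\|\hat g_S - \tilde g_S\|_2 \;\leq\; \sum_{j \in S} \lambda_j \, \|k_h(X_j - \cdot) - \tilde k_h(X_j - \cdot)\|_2.
\]

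Next, because Lebesgue measure on $\mathbb{R}^d$ is translation invariant, the substitution $u = X_j - y$ in each term shows that $\|k_h(X_j - \cdot) - \tilde k_h(X_j - \cdot)\|_2 = \|k_h - \tilde k_h\|_2$, independent of the location of $X_j$. Applying Lemma \ref{lem:cutoff_kernel} then gives $\|k_h - \tilde k_h\|_2 \leq c_{\beta,d}\, h^{\nu - d}$. Finally, using $\sum_{j \in S} \lambda_j = 1$ collapses the sum of the per-term bounds into a single constant times $h^{\nu - d}$, which is the claimed estimate.

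The argument is essentially mechanical, and there is no substantive obstacle. The only step worth highlighting is the translation invariance of the $L^2$ norm, which is what makes the per-term kernel-level bound uniform in $X_j$; once that is in place, convexity of the weights on the simplex closes the estimate immediately, with no need for any assumption about where the $X_j$ lie within the support.
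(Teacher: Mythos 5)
Your proof is correct and matches the paper's intended argument exactly; the paper itself disposes of this lemma in one sentence ("The triangle inequality and the previous lemma yield the next result"), and your write-up simply spells out the translation-invariance step that makes the per-term bound uniform in $X_j$ and the use of $\sum_j \lambda_j = 1$ to collapse the sum.
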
 

Next we show that $\tilde{k}_h$ is well approximated by its Fourier expansion on $[-2, 2]^d$. Since $\tilde{k}_h$ is a smooth periodic function on $[-2, 2]^d$, it is expressed in $L_2$ as a Fourier series on $\frac\pi2 \mb{Z}^d$. Thus we bound the tail of this expansion. In what follows, $\alpha \in \mb{Z}^d_{\geq 0}$ is a multi-index and
     \[
     \mc{\bar F}[f](\omega) = \frac{1}{4^{2d}} \int f(x) e^{i \langle x, \omega \rangle} \, \ud x
     \]
     denotes the (rescaled) Fourier transform on $[-2, 2]^d$, where $\omega \in \frac\pi2 \mb{Z}^d$. 
 
 \begin{lemma}
 \label{lem:kernel_expansion}
 	Suppose that the kernel $k \in \sd(\beta, L')$. Let $A = \{ \omega \in \frac\pi2 \mb{Z}^d: \, \abs{\omega}_1 \leq T \}$, and define
 	\[
 	\tilde{k}_h^T(y) = \sum_{\omega \in A} \mc{\bar F}[\tilde{k}_h](\omega) e^{i \langle y , \omega \rangle}.
 	\]
 	Then 
 	\[
 	\norm{ (\tilde{k}_h - \tilde{k}_h^T)\1_{[-2,2]^d} }_2 \leq c_{\gamma, d, L'} \, T^{-\gamma} h^{-d/2-\gamma} 
 	\]
 	
 \end{lemma}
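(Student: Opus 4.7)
The plan is to invoke the standard duality between Sobolev regularity in $L^2$ and weighted $\ell^2$-decay of Fourier coefficients. The cutoff $\varphi$ plays the dual role of (i) making $\ti k_h$ compactly supported in $[-2,2]^d$ so that a genuine Fourier series on the cube is available, and (ii) killing boundary terms in the integration by parts that converts smoothness into Fourier decay.

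First I would establish the Sobolev bound $\|D^\alpha \ti k_h\|_2 \le c_{d,\gamma,L'}\, h^{-d/2-\gamma}$ for every multi-index $\alpha$ with $|\alpha|_1 = \gamma$. The product structure $k(x) = \prod_i \kappa(x_i)$, together with the hypothesis that $k \in \sd(\gamma,L')$ and the decay $|\kappa(x)| \le c|x|^{-\nu}$ (which controls lower-order derivatives of $\kappa$ in $L^2$), implies that $\|D^\beta k\|_2$ is finite for every $|\beta|_1 \le \gamma$. A change of variables then yields $\|D^\beta k_h\|_2 \le c_{d,\gamma,L'}\, h^{-d/2-|\beta|_1}$. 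Applying Leibniz to $\ti k_h = k_h \varphi$ and using boundedness of $\varphi$ together with all its derivatives, the dominating term corresponds to $|\beta|_1 = \gamma$ and gives the stated bound.

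Next, because $\varphi$ is compactly supported in $[-2,2]^d$, $\ti k_h$ vanishes together with all its derivatives on $\partial[-2,2]^d$, so integration by parts on the cube gives
\[
\mc{\bar F}[D^\alpha \ti k_h](\omega) = (-i\omega)^\alpha \, \mc{\bar F}[\ti k_h](\omega), \qquad \omega \in \tfrac{\pi}{2}\mb{Z}^d.
\]
Combining this with Parseval on $[-2,2]^d$ and the Sobolev bound above,
\[
\sum_{\omega \in \frac{\pi}{2}\mb{Z}^d} |\omega^\alpha|^2 |\mc{\bar F}[\ti k_h](\omega)|^2 \le c_d \, \|D^\alpha \ti k_h\|_2^2 \le c_{d,\gamma,L'}\, h^{-d-2\gamma}
\]
for every $|\alpha|_1 = \gamma$. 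Any $\omega$ with $|\omega|_1 > T$ has some coordinate $|\omega_i| > T/d$, so applying the above with $\alpha = \gamma e_i$ for each $i \in \{1,\ldots,d\}$ and summing yields
\[
\sum_{|\omega|_1 > T} |\mc{\bar F}[\ti k_h](\omega)|^2 \le c_{d,\gamma,L'} \, T^{-2\gamma} h^{-d-2\gamma}.
\]
Parseval once more identifies the left-hand side, up to a $d$-dependent constant, with $\|(\ti k_h - \ti k_h^T)\1_{[-2,2]^d}\|_2^2$, and taking a square root gives the announced bound.

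The main obstacle is the Leibniz/scaling bookkeeping of the first step --- in particular controlling intermediate-order Sobolev norms of $k_h$, not merely the top order, which forces one to combine the hypothesis $k \in \sd(\gamma,L')$ with the pointwise decay of $\kappa$ via an interpolation-type argument. Once this regularity estimate is established, the two Fourier-analytic steps are essentially mechanical.
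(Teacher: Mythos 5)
Your proof is correct and follows essentially the same route as the paper's: bound $\|D^\alpha \tilde{k}_h\|_2$ via Leibniz and scaling, convert Sobolev regularity into weighted $\ell^2$ decay of the Fourier coefficients via integration by parts and Parseval, and then truncate the tail. The paper packages the tail truncation slightly differently, using the multinomial identity $\sum_{|\alpha|_1=\gamma}\tfrac{\gamma!}{\alpha!}|\omega|^\alpha = |\omega|_1^\gamma \ge T^\gamma$ rather than your coordinate-wise union bound with $\alpha = \gamma e_i$; both yield the same $c_{d,\gamma,L'}\,T^{-\gamma}$ factor. One small imprecision in your write-up: the pointwise decay $|\kappa(x)| \le c|x|^{-\nu}$ does not by itself control lower-order derivatives of $\kappa$ in $L^2$ --- it controls $\|\kappa\|_1$ and $\|\kappa\|_2$, and the intermediate orders $\|\kappa^{(j)}\|_2$, $0<j<\gamma$, come from the interpolation you allude to; the paper implements this cleanly in Fourier space by splitting $\int|\omega^a\hat\kappa(\omega)|^2\,d\omega$ into $|\omega|\le 1$ (bounded by $\|\kappa\|_1$) and $|\omega|>1$ (bounded by the top-order Sobolev norm $L'$).
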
 
 
 \begin{proof}
     Observe that for $\omega \notin A$, it holds that
     	\[
     	\sum_{\abs{\alpha}_1 = \gamma} \frac{\gamma!}{\alpha!} \abs{\omega}^\alpha = ( \abs{\omega_1} + \cdots + \abs{\omega_d} )^{\gamma} \geq T^\gamma. 
     	\]
     
     Therefore, 	
     	\begin{align}
     	\norm{\mc{\bar F}[\tilde{k}_h](\omega) \1_{\omega \notin A}}_{\ell_2} 
     	&\leq  \, T^{-\gamma} \norm{ \sum_{\abs{\alpha}_1 = \gamma} \frac{\gamma!}{\alpha!} \abs{\omega}^\alpha \mc{\bar F}[\tilde{k}_h](\omega) \1_{\omega \notin A} }_{\ell_2} \nonumber \\
     	&\leq  \, T^{-\gamma} \sum_{\abs{\alpha}_1 = \gamma} \frac{\gamma!}{\alpha!} \, \norm{  \omega^\alpha \mc{\bar F}[\tilde{k}_h](\omega) }_{\ell_2} \nonumber \\
     	&=  \, c_d \, T^{-\gamma} \sum_{\abs{\alpha}_1 = \gamma} \frac{\gamma!}{\alpha!} \, \norm{ \frac{\partial^\alpha}{\partial x^{\alpha}} \tilde{k}_h(x) }_2, \label{eqn:four_1}
     	\end{align} 
        where in the last line we used Parseval's identity. For any multi-index $\alpha$ with $\abs{\alpha}_1 = \gamma$, 
        \begin{align}
        \norm{ \frac{\partial^\alpha}{\partial x^{\alpha}} \tilde{k}_h(x) }_2
        &= \norm{ \sum_{ \eta \preceq \alpha} \frac{\partial^\eta}{\partial x^{\eta}} k_h(x) \, \frac{\partial^{\alpha-\eta}}{\partial x^{\alpha-\eta}} \varphi(x) }_2 \nonumber \\
        &\leq h^{-\frac{d}{2} - \gamma} \sum_{ \eta \preceq \alpha} c_{d, \gamma} \,  \norm{ \frac{\partial^\eta}{\partial x^{\eta}} k(x) }_2 \label{eqn:four_2},
        \end{align}
        where we used that the derivatives of $\varphi$ are bounded. Next by Parseval's identity,
        \begin{align}
        \norm{ \frac{\partial^\eta}{\partial x^{\eta}} k(x) }_2^2 &= 
        \prod_{i = 1}^d \norm{ \omega_i^{\eta_i} \mc{F}[\kappa](\omega_i) }_2^2. \label{eqn:four_3}
        \end{align}
        For $0 \leq a \leq \gamma$, we have
        \begin{align}
        \int \abs{ \omega^a \mc{F}[\kappa](\omega)}^2  \ud \omega 
        \leq \norm{k}_1 + \int_{\abs{\omega} \geq 1} \abs{ \omega^\gamma \mc{F}[\kappa](\omega)}^2  \ud \omega 
                \leq \norm{k}_1 + L'. \label{eqn:four_4}
        \end{align}
        By \eqref{eqn:four_1}--\eqref{eqn:four_4},
        \begin{align*}
        \norm{\mc{\bar F}[\tilde{k}_h](\omega) \1_{\omega \notin A}}_{\ell_2} \leq c_{d, \gamma, L'} \, T^{-\gamma} h^{-\frac{d}{2} - \gamma}, 
        \end{align*}
        as desired. 
        
\end{proof}

Applying the previous lemma and linearity of the Fourier transform, we have the next corollary that gives an expansion for a general KDE on the smaller domain $[-\frac12, \frac12]^d$. 

\begin{corollary}
	Let $\tilde{g_S}$ denote the KDE built from $\ti k_h$ from Lemma \ref{lem:cutoff_kde} where $X_1, \ldots, X_m \in [-\frac12, \frac12]^d$ and moreover $\kappa \in \sd(\beta, L')$. Let $A = \{ \omega \in \frac{\pi}{2}\mb{Z}^d: \, \abs{\omega}_1 \leq T \}$, and define
	\[
	\tilde{g_S}^T(y) = \sum_{\omega \in A} \mc{\bar F}[ \tilde{g_S} ](\omega) e^{i \langle y, \omega \rangle}. 
	\] 
	Then
	\[
	\norm{ (\tilde{g_S} - \tilde{g_S}^T)\1_{[-\frac12, \frac12]^d} }_2 \leq c_{d, \gamma, L'} \, T^{-\gamma} h^{-d/2-\gamma} L. 
	\]
\end{corollary}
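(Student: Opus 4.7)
The plan is to reduce the corollary to Lemma \ref{lem:kernel_expansion} via linearity of $\mc{\bar F}$ and a translation argument. First I would apply linearity: since $\tilde g_S(y) = \sum_{j \in S} \lambda_j \tilde k_h(X_j - y)$, it follows that $\mc{\bar F}[\tilde g_S](\omega) = \sum_{j \in S} \lambda_j \mc{\bar F}[\tilde k_h(X_j - \cdot)](\omega)$, and therefore
\[
\tilde g_S^T(y) = \sum_{j \in S} \lambda_j [\tilde k_h(X_j - \cdot)]^T(y),
\]
where $[\tilde k_h(X_j - \cdot)]^T$ denotes the truncation to frequencies $\omega \in A$ of the shifted kernel $\tilde k_h(X_j - \cdot)$ viewed as a function of $y$.

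Next, combining $\sum_{j \in S} \lambda_j = 1$ with the triangle inequality yields
\[
\norm{(\tilde g_S - \tilde g_S^T)\1_{[-\frac12, \frac12]^d}}_2 \leq \max_{j \in S} \norm{(\tilde k_h(X_j - \cdot) - [\tilde k_h(X_j - \cdot)]^T)\1_{[-\frac12, \frac12]^d}}_2.
\]
For each $j$, I would then invoke the shift property of $\mc{\bar F}$: a direct change of variables gives $\mc{\bar F}[\tilde k_h(X_j - \cdot)](\omega) = e^{i\langle X_j, \omega\rangle} \mc{\bar F}[\tilde k_h](-\omega)$, and the evenness of $\kappa$ (hence of $\tilde k_h$) renders $\mc{\bar F}[\tilde k_h](-\omega) = \mc{\bar F}[\tilde k_h](\omega)$. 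Thus $[\tilde k_h(X_j - \cdot)]^T$ is a pointwise translate of $\tilde k_h^T$. Applying the substitution $u = X_j - y$ and using that $X_j, y \in [-\frac12, \frac12]^d$ forces $u \in [-1, 1]^d \subset [-2, 2]^d$ produces
\[
\norm{(\tilde k_h(X_j - \cdot) - [\tilde k_h(X_j - \cdot)]^T)\1_{[-\frac12, \frac12]^d}}_2 \leq \norm{(\tilde k_h - \tilde k_h^T)\1_{[-2, 2]^d}}_2,
\]
and Lemma \ref{lem:kernel_expansion} bounds the right-hand side by $c_{d, \gamma, L'}\, T^{-\gamma} h^{-d/2 - \gamma}$, which gives the desired estimate.

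The main obstacle will be carefully handling the shift--truncation identity on the fundamental domain $[-2, 2]^d$. The subtlety is that the shifted function $\tilde k_h(X_j - \cdot)$, viewed as a function of $y$, has support in $X_j - [-2, 2]^d$, which for $X_j \in [-\frac12, \frac12]^d$ extends up to $[-\frac52, \frac52]^d$ and thus beyond the fundamental domain. Since the conclusion only involves the $L^2$ norm restricted to the smaller cube $[-\frac12, \frac12]^d$, however, the relevant values of the argument $X_j - y$ all lie in $[-1, 1]^d \subset [-2, 2]^d$, so the local behavior reduces cleanly to that of the unshifted kernel on $[-2, 2]^d$, where Lemma \ref{lem:kernel_expansion} applies as a black box.
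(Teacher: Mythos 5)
Your proof is correct and follows exactly the route the paper indicates (``Applying the previous lemma and linearity of the Fourier transform''), but you flesh out the steps the paper leaves implicit: linearity of $\mc{\bar F}$, the bound $\sum_j \lambda_j = 1$ with $\lambda_j \ge 0$ feeding the triangle inequality, the shift identity for the Fourier coefficients, and the change of variables $u = X_j - y$ to reduce to Lemma~\ref{lem:kernel_expansion}. You also correctly identify and dispose of the one genuine subtlety — that the translate $\tilde k_h(X_j - \cdot)$ could in principle wrap around the fundamental domain $[-2,2]^d$ — by observing that $X_j, y \in [-\tfrac12,\tfrac12]^d$ keeps all relevant arguments inside $[-1,1]^d$. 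Two cosmetic remarks: first, your final bound $c_{d,\gamma,L'} T^{-\gamma} h^{-d/2-\gamma}$ is actually \emph{tighter} than the corollary's stated $c_{d,\gamma,L'} T^{-\gamma} h^{-d/2-\gamma} L$ (the extra factor of $L$ in the paper appears to be a typo or slack, and is harmless since it only weakens the claim). Second, the paper's displayed definition of $\mc{\bar F}$ uses $e^{+i\langle x,\omega\rangle}$ together with a reconstruction that also uses $e^{+i\langle y,\omega\rangle}$, which is internally inconsistent as a Fourier-series pair; your shift computation implicitly uses the standard convention, which is what is intended, so no action needed.
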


Now we have all the ingredients needed to prove Proposition \ref{prop:Caratheodory_prop}. %. 

\begin{proof}[Proof of Proposition \ref{prop:Caratheodory_prop}
%1
]
	Let 
	\[
	\tilde{f}(y) = \frac{1}{n} \sum_{j = 1}^n \tilde{k}_h(X_j - y), 
	\]
	and
	\[
	\tilde{g_S}(y) = \sum_{j \in S} \lambda_j \tilde{k}_h(X_j - y). 
	\]
	Also consider their expansions $\tilde{f}^T$ and $\tilde{g}^T$ as defined in Lemma \ref{lem:kernel_expansion}. Observe that, by construction of the Carath\'{e}odory coreset,
	\[
	\tilde{f}^T(y) = \tilde{g}^T(y) \quad \forall y \in [-\frac12, \frac12]^d.
	\] 
	In what follows, $\norm{ \cdot }_2$ is computed on $[-\frac12, \frac12]^d$. By the triangle inequality,
	\begin{align}
		\norm{ \hat g_S - \hat f }_2 &\leq 
		\norm{ \hat g_S - \tilde{g} }_2 + \norm{ \tilde{g} - \tilde{g}^T}_2 + 
		\norm{ \tilde{g}^T - \tilde{f}^T} \nonumber \\
		&\quad + \norm{ \tilde{f}^T - \tilde{f}}_2 +
		\norm{\tilde{f} - \hat f}_2 \nonumber \\
		&\leq c_{\beta, d} \, h^{-d + \nu} + c_{d, \gamma, L'} \, T^{-\gamma} h^{-d/2 - \gamma}  + 0 \nonumber \\
		&\quad + c_{d, \gamma, L'} \, T^{-\gamma} h^{-d/2 - \gamma} + c_{\beta, d} \, h^{-d + \nu} \label{eqn:Cara_main_bound}
	\end{align}
	On the right-hand-side of the first line, the first and last terms are bounded via Lemma \ref{lem:cutoff_kde}. The second and fourth terms are bounded via Lemma \ref{lem:kernel_expansion}, and the third term is $0$ by Carath\'{e}odory. By our choice of $T$ and the decay properties of $k$, we have
	\[
	   \norm{ \hat g_S - \hat f }_2 \leq c_{\beta,  d, L} \, h^\beta \leq c_{\beta,  d, L} \, n^{-\beta/(2\beta + d)}.  
	\]
	The conclusion follows by the hypothesis on $k$, the previous display, and the triangle inequality.
\end{proof}

\subsection{Proof of Theorem 2}
\label{sec:thm2_pf}
We restate Theorem \ref{thm:main_Caratheodory}
%2
here for convenience.

\begin{theorem*}
Let $\eps > 0$. The Carath\'{e}odory coreset estimator $\hat g_S(y)$ built using the kernel $k_s$ and setting $T = c_{d, \beta, \eps} \, n^{\frac{\eps}{d} + \frac{1}{2\beta + d} }$ satisfies  
\[
\sup_{f \in \hd(\beta, L)} \mb{E}_f \norm{\hat g_S - f}_2 \leq c_{\beta, d, L} \, n^{-\frac{\beta}{2\beta + d}}.
\]
The corresponding coreset has cardinality \[m = c_{d, \beta, \eps} n^{ \frac{d}{2\beta + d}+\eps}.\] 
\end{theorem*}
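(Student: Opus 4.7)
The plan is to construct a smooth, rapidly decaying kernel $k_s$ satisfying all the hypotheses of Proposition~\ref{prop:Caratheodory_prop} for arbitrarily large $\gamma$, and then choose $\gamma$ large enough as a function of $\eps$ to absorb the extra factor in $T$ into $n^{\eps/d}$. The construction is the one sketched in the discussion following the Proposition: fix a $\mathcal{C}^\infty$ cutoff $\psi$ supported in $[-2,2]$ with $\psi\equiv 1$ on $[-1,1]$, set $\kappa_s=\mathcal{F}[\psi]$, and let $k_s(x)=\prod_{i=1}^d \kappa_s(x_i)$.

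The three hypotheses of Proposition~\ref{prop:Caratheodory_prop} are verified as follows. First, since $1-\mathcal{F}[k_s](\omega)$ vanishes on $[-1,1]^d$ and $\mathcal{F}[k_s]$ is bounded, one gets $\mathrm{ess\ sup}_{\omega\ne 0}|1-\mathcal{F}[k_s](\omega)|/|\omega|^\alpha\le 1$ for every $\alpha\preceq\beta$, which by the standard Fourier criterion \citep[Prop.~1.5 and Thm.~1.2 in][]{Tsy09} implies the minimax bound \eqref{eqn:minimax_condition} for $k_s$ with $h=n^{-1/(2\beta+d)}$. Second, because $\psi\in\mathcal{C}^\infty_c$, iterated integration by parts in the defining Fourier integral of $\kappa_s$ yields $|\kappa_s(x)|\le c_{\beta,d}|x|^{-\nu}$ for any $\nu$, in particular for some $\nu\ge\beta+d$. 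Third, for any integer $\gamma$, Parseval's identity gives $\|\kappa_s^{(\gamma)}\|_2=\|\omega^\gamma \psi(\omega)\|_2\le c_\gamma$, so $\kappa_s\in\sd(\gamma,c_\gamma)$ for every $\gamma$.

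Having checked the hypotheses, apply Proposition~\ref{prop:Caratheodory_prop} with this $k_s$ and a free parameter $\gamma\in\mathbb{N}$. It yields
\[
\sup_{f\in\hd(\beta,L)}\mathbb{E}\|\hat g_S-f\|_2\le c_{\beta,d,L}\,n^{-\beta/(2\beta+d)},
\]
with cutoff $T_\gamma=c_{d,\gamma}\,n^{(d/2+\beta+\gamma)/(\gamma(2\beta+d))}$ and hence, by the Carath\'eodory bound $m\le 2(1+4T_\gamma/\pi)^d+1$, a coreset of cardinality $m\le c_{d,\gamma}\,T_\gamma^d$. The remaining step is to choose $\gamma$ so the exponent matches. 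Writing
\[
\frac{d(d/2+\beta+\gamma)}{\gamma(2\beta+d)}=\frac{d}{2\beta+d}+\frac{d(d/2+\beta)}{\gamma(2\beta+d)},
\]
one sees that taking $\gamma=\gamma(\eps):=\lceil d(d/2+\beta)/(\eps(2\beta+d))\rceil$ makes the second summand at most $\eps$. With this $\gamma$, one has $T_\gamma\le c_{d,\beta,\eps}\,n^{1/(2\beta+d)+\eps/d}$ and $m\le c_{d,\beta,\eps}\,n^{d/(2\beta+d)+\eps}$, matching the statement.

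The only mildly delicate point is making sure that choosing $\gamma$ large enough to absorb $\eps/d$ into the exponent is compatible with the constant $c_{d,\gamma,L'}$ in $T$ staying finite; since $\gamma=\gamma(d,\beta,\eps)$ is a fixed integer once $d,\beta,\eps$ are fixed, the dependence $c_{d,\gamma,L'}=c_{d,\beta,\eps}$ is harmless. No other step involves more than the bookkeeping already carried out in the proof of Proposition~\ref{prop:Caratheodory_prop}.
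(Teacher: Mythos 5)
Your proposal is correct and follows the paper's strategy: construct $k_s=\prod_i\mathcal{F}[\psi]$ from a compactly supported smooth cutoff $\psi\equiv 1$ near the origin, verify the three hypotheses of Proposition~\ref{prop:Caratheodory_prop} (minimax rate, polynomial tail decay, Sobolev bound), and take $\gamma\asymp 1/\eps$ so that $T$ and $m$ acquire only the extra factors $n^{\eps/d}$ and $n^{\eps}$. The one minor deviation is how the minimax condition \eqref{eqn:minimax_condition} is verified for the KDE built on $k_s$: you invoke Tsybakov's Fourier-side criterion directly, whereas the paper shows that all moments of $k_s$ vanish (since $\psi\equiv 1$ in a neighborhood of the origin forces $\int k_s(x)\,x^\alpha\,\ud x=0$ for every multi-index $\alpha$) and then applies the moment-based notion of a kernel of order $\beta$; both are standard and lead to the same conclusion.
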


\begin{proof}
Our goal is to apply Proposition \ref{prop:Caratheodory_prop}
%1
to $k_s$. First we show that the standard KDE built from $k_s$ attains the minimax rate on $\hd(\beta, L)$. The Fourier condition 
\[
\text{ess sup}_{\omega \neq 0} \frac{1 - \mc{F}[k_s](\omega)}{ \abs{\omega}^{\alpha} } \leq 1, \quad \forall  \alpha \preceq \beta,
\]
implies that $k_s$ is a kernel of order $\beta$ \citep[][Definition 1.3]{Tsy09}. Since $\mc{F}[k_s](0) = 1 = \int k_s(x) \, \ud x$, it remains to show that the `moments' of order at most $\beta$ of $k_s$ vanish. In fact all of the moments vanish. 
%Let $x \circ \omega \in \mb{R}^d$ denote the pointwise multiplication of vectors $x, \omega \in \mb{R}^d$. 
We have, expanding the exponential and using the multinomial formula,
\begin{align*}
\psi(\omega) &= \mc{F}^{-1}[k_s](\omega) \\ 
&= \int k_s(x) e^{-i \langle x, \omega \rangle} \ud x \\
&= \sum_{t = 0}^\infty \int k_s(x)  \frac{(-i \langle x, \omega \rangle)^t}{t!} \ud x \\
&= \sum_{t = 0}^\infty \sum_{\abs{\alpha}_1 = t} \frac{-i^t}{\alpha!} w^\alpha \left\{ \int k_s(x)  x^\alpha \ud x \right\}.
\end{align*}
Since $\psi(\omega) \equiv 1$ in a neighborhood near the origin, it follows that all of the terms $\int k_s(x)  x^\alpha \ud x = 0$. Thus $k_s$ is a kernel of order $\beta$ for all $\beta \in \mb{Z}_{\geq 0}$, and the standard KDE on all of the dataset with bandwidth $h = n^{-1/(2\beta + d)}$ attains the rate of estimation $n^{-\beta/(2\beta + d)}$ over $\hd(\beta, L)$ \citep[see e.g.][Theorem 1.2]{Tsy09}. 

Next, $\abs{\kappa_s(x)} \leq c_{\beta,d} \abs{x}^{\nu}$ for $\nu = \ceil{\beta + d}$. This is because
\[
x^\nu \kappa_s(x) = x^\nu \mc{F}[\psi](x) = \mc{F}\left[\frac{\ud^\nu}{\ud x^{\nu}}\psi\right](x) \leq \norm{ \frac{\ud^\nu}{\ud x^{\nu}}\psi }_1 \leq c_{\beta, d}.
\]
Moreover for all $\gamma \in \mb{Z}_{>0}$, $\kappa_s \in \sd( \gamma, c_\gamma)$. By Parseval's identity,
\[
\norm{ \frac{\ud^\gamma}{\ud x^{\gamma}} \kappa_s }_2 = 
\norm{ \mc{F}[ \frac{\ud^\gamma}{\ud x^{\gamma}} \kappa_s ] }_2 = 
\norm{ \omega^\gamma \psi(\omega)  }_2 \leq c_{\gamma}
\]
because $\psi$ has compact support \citep[see e.g.][Chapter VI]{Kat04}.  

All of the hypotheses of Proposition \ref{prop:Caratheodory_prop}
%1
are satisfied, so we apply the result with 
\[
\gamma = \frac{d(\frac{d}{2} + \beta)}{\eps(2\beta + d)} 
\]
to derive Theorem \ref{thm:main_Caratheodory}.
%2

\end{proof}

\subsection{Proof of Corollary \ref{cor:general_Cara_bd}}

%\textit{Authors' note:} There was a typo: it should read $h = m^{-\frac{1}{d} + \frac{\eps}{\beta}}$ rather than $h = m^{-\frac{1}{d} + \eps}$, and this is corrected in the restatement below. 

\begin{corollary*}
% \label{cor:general_Cara_bd}
Let $\eps > 0$ and $m \leq c_{\beta, d, \eps} \, n^{\frac{d}{2\beta + d} + \eps}$. The Carath\'{e}odory coreset estimator $\hat g_S(y)$ built using the kernel $k_s$, setting $h = m^{-\frac{1}{d} +  \frac{\eps}{\beta} }$ and $T =c_d \, m^{1/d}$, satisfies
\begin{equation*}
\sup_{f \in \hd(\beta, L)} \mb{E}\norm{ \hat g_S - f}_2 \leq c_{\beta, d, \eps, L} \, \left(  m^{-\frac{\beta}{d} + \eps} + n^{-\frac{\beta}{2\beta + d} + \eps} \right),
\end{equation*} 
and the corresponding coreset has cardinality $m$.
\end{corollary*}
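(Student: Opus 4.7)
The plan is to follow the blueprint of Proposition \ref{prop:Caratheodory_prop} and Theorem \ref{thm:main_Caratheodory}, but with the non-minimax bandwidth $h = m^{-1/d + \eps/\beta}$ and cutoff $T = c_d m^{1/d}$ dictated by the statement. The key difference from Theorem \ref{thm:main_Caratheodory} is that the underlying KDE $\hat f$ on the full dataset is no longer built with the minimax bandwidth $n^{-1/(2\beta+d)}$, so its risk must be controlled directly from a bias--variance decomposition. By the analysis in Theorem \ref{thm:main_Caratheodory}, the kernel $k_s$ is a kernel of order $\beta$ for every $\beta > 0$, lies in $\sd(\gamma, c_\gamma)$ for every integer $\gamma \geq 0$, and satisfies $|\kappa_s(x)| \leq c_{\beta,d,\nu}|x|^{-\nu}$ for every $\nu$. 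Standard KDE theory (\citealp[Theorems~1.1--1.2]{Tsy09}) therefore gives
\[
\sup_{f \in \hd(\beta,L)} \mb{E}\|\hat f - f\|_2 \leq c_{\beta,d,L}\Big( h^\beta + \tfrac{1}{\sqrt{nh^d}}\Big).
\]
With $h = m^{-1/d + \eps/\beta}$ the bias term is exactly $m^{-\beta/d + \eps}$, and the variance term equals $\sqrt{m^{1 - d\eps/\beta}/n}$; plugging in the hypothesis $m \leq c_{\beta,d,\eps}\, n^{d/(2\beta+d) + \eps}$ and expanding exponents shows this is at most $c\, n^{-\beta/(2\beta+d) + c'\eps}$ for some $c' = c'_{\beta,d}$. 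Shrinking the initial $\eps$ absorbs this factor.

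Next I would run the Carathéodory construction of Section \ref{sec:Cara_construction} on the frequency set $A = \{\omega \in \tfrac{\pi}{2}\mb{Z}^d : |\omega|_\infty \leq T\}$ with $T = c_d m^{1/d}$. The resulting coreset has cardinality at most $2(1 + 4T/\pi)^d + 1 \leq m$ once $c_d$ is taken sufficiently small, verifying the size claim. To bound $\|\hat g_S - \hat f\|_2$, I retrace the proof of Proposition \ref{prop:Caratheodory_prop}: the cutoff step (Lemma \ref{lem:cutoff_kernel}) contributes $c\, h^{\nu - d}$, which for $\nu = \lceil \beta + d \rceil$ is dominated by $h^\beta$; the Fourier truncation (Lemma \ref{lem:kernel_expansion}) contributes $c_{d,\gamma,L'}\, T^{-\gamma} h^{-d/2 - \gamma}$; and the Carathéodory step contributes $0$. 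A direct exponent check shows $T^{-\gamma} h^{-d/2 - \gamma} \leq h^\beta = m^{-\beta/d + \eps}$ as soon as $\gamma \geq (\beta/\eps)(\tfrac12 + \beta/d) - d/2$, which is a finite constant $\gamma_{\beta,d,\eps}$. Since $\kappa_s \in \sd(\gamma, c_\gamma)$ for every such $\gamma$, fixing this $\gamma$ yields $\|\hat g_S - \hat f\|_2 \leq c_{\beta,d,\eps,L}\, m^{-\beta/d + \eps}$. Combining with the previous step via the triangle inequality $\mb{E}\|\hat g_S - f\|_2 \leq \mb{E}\|\hat g_S - \hat f\|_2 + \mb{E}\|\hat f - f\|_2$ delivers the conclusion.

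The main obstacle I expect is the exponent bookkeeping: one must verify simultaneously that the bandwidth $h = m^{-1/d + \eps/\beta}$ is small enough for the KDE bias to match $m^{-\beta/d + \eps}$, large enough for the variance to match $n^{-\beta/(2\beta+d) + \eps}$ under the hypothesis on $m$, and compatible with the Fourier cutoff $T = c_d m^{1/d}$ for some finite $\gamma = \gamma_{\beta,d,\eps}$. Each of these is an elementary computation, but they interact and the $\eps$ in the conclusion has to absorb additive multiples of the $\eps$ appearing in each step; this is arranged by rescaling $\eps$ once at the outset and relabeling constants throughout.
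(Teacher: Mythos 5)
Your proposal is correct and follows essentially the same route as the paper: the bias--variance bound for the full-data KDE with bandwidth $h$, the three-term estimate from \eqref{eqn:Cara_main_bound} (cutoff error, Fourier truncation error, exact Carath\'eodory matching), and a choice of a finite smoothness order $\gamma = \gamma_{\beta,d,\eps}$ that makes the truncation term $T^{-\gamma}h^{-d/2-\gamma}$ dominated by $h^\beta$. Your threshold on $\gamma$ is slightly more conservative than the paper's exact value $(\beta + d/2)(\tfrac{\beta}{d\eps}-1)$, but since $\kappa_s \in \sd(\gamma, c_\gamma)$ for every integer $\gamma$, that looseness is harmless.
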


\begin{proof}

Recall from the proof of Theorem \ref{thm:main_Caratheodory}
%2
that $k_s$ is a kernel of all orders. By a standard bias-variance trade-off \citep[see e.g.][Section 1.2]{Tsy09}, it holds that for the KDE $\hat f$ with bandwidth $h$ (on the entire dataset)

\begin{equation}
\label{eqn:std_KDE_h}
\E_f \norm{\hat f - f}_2 \leq c_{\beta, d, L} \left( h^\beta + \frac{1}{\sqrt{n h^d}}\right). 
\end{equation}
Moreover, from \eqref{eqn:Cara_main_bound} applied to $k_s$ , setting $T = m^{1/d}$, we get
\begin{equation}
\label{eqn:cara_main_h}
\norm{\hat g_S - \hat f}_2 \leq c_{\beta, d} \, h^{\beta} + c_{d, \gamma} \, m^{-d/\gamma} h^{-d/2 -\gamma}. 
\end{equation}
Choosing 
\[
\gamma = (\beta + \frac{d}{2})(\frac{\beta }{d \eps} - 1), \quad h = m^{-\frac{1}{d} + \frac{\eps}{\beta}}
\]
(assuming without loss of generality that $\eps > 0$ is sufficiently small so that $\gamma > 0$), then the triangle inequality, \eqref{eqn:std_KDE_h}, \eqref{eqn:cara_main_h}, and the upper bound on $m$ yield the conclusion of Corollary \ref{cor:general_Cara_bd}.
%1

\end{proof}

\subsection{Proof of Theorem \ref{thm:weight_kde_lbd}}
%4

For convenience, we restate Theorem \ref{thm:weight_kde_lbd}
%4
here.

\begin{theorem*}
Let $A, B \geq 1$. Let $k$ denote a kernel with $\norm{k}_2 \leq n$. Let $\hat g_S$ denote a weighted coreset KDE with bandwidth $h \geq n^{-A}$ built from $k$ with weights $\{\lambda_j\}_{j \in S}$ satisfying $\max_{j \in S} \abs{\lambda _j} \leq n^B$. Then
\begin{equation*}
\sup_{ f \in \hd(\beta, L)}  \E_f \norm{\hat g_S -  f}_2 \geq \\ c_{\beta, d, L} \left[ (A + B)^{-\frac{\beta}{d}} (m \log{n})^{-\frac{\beta}{d}} + n^{-\frac{\beta}{2\beta + d}} \right].
\end{equation*}
\end{theorem*}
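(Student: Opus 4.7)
I would reduce the lower bound to the one for \emph{decorated} coreset-based estimators already established in Proposition~\ref{prop:decorated}, by observing that a weighted coreset KDE with bounded weights is, up to a negligible $L^2$ error, determined by the coreset $X_S$ together with a short binary description of a quantized weight vector. For concreteness I first take $h$ to be a fixed (non-data-dependent) parameter of the estimator; the extension to data-dependent $h$ is treated at the end.

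The first step is to quantize the weights. I would replace each $\lambda_j \in [-n^B, n^B]$ by its nearest point $\tilde\lambda_j$ on a grid of spacing $\epsilon_0 := n^{-C}$, and set $\tilde g_S(y) := \sum_{j \in S}\tilde\lambda_j\, k_h(X_j - y)$. Using translation-invariance of $\|\cdot\|_2$ together with the hypotheses $h \geq n^{-A}$ and $\|k\|_2 \leq n$,
\begin{equation*}
\|\hat g_S - \tilde g_S\|_2 \;\leq\; \sum_{j \in S}|\lambda_j - \tilde\lambda_j|\cdot \|k_h\|_2 \;\leq\; m\epsilon_0 h^{-d/2}\|k\|_2 \;\leq\; m\epsilon_0 n^{1+Ad/2}.
\end{equation*}
Choosing $C = C(A, d, \beta)$ large enough makes this $o(n^{-\beta/(2\beta + d)})$ almost surely, hence negligible compared to the target lower bound.

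Each quantized weight is then specified by $\lceil \log_2(2n^B/\epsilon_0)\rceil = O((A+B)\log n)$ bits, so the full vector $(\tilde\lambda_j)_{j \in S}$ is encoded as a binary string $\sigma$ of length $R = O(m(A+B)\log n)$. Because $\tilde g_S$ is a measurable function of $X_S$ and $\sigma$ alone, it is a decorated coreset-based estimator in the sense of Proposition~\ref{prop:decorated}. Applying that proposition and noting $m\log n + R = \Theta((A+B)\,m\log n)$ yields
\begin{equation*}
\sup_{f \in \hd(\beta, L)}\E_f\|\tilde g_S - f\|_2 \;\geq\; c_{\beta, d, L}\left[(A+B)^{-\beta/d}(m\log n)^{-\beta/d} + n^{-\beta/(2\beta + d)}\right].
\end{equation*}
The triangle inequality $\|\hat g_S - f\|_2 \geq \|\tilde g_S - f\|_2 - \|\hat g_S - \tilde g_S\|_2$, combined with Step~1, transfers this bound to $\hat g_S$ at the cost of absorbing a factor into $c_{\beta, d, L}$.

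\textbf{Main obstacle.} The only subtle point is the handling of $h$ when it depends on the data. For $h$ very large the KDE $\hat g_S$ is $L^\infty$-close to $0$ and therefore has at least constant $L^2$ distance from any density in $\hd(\beta, L)$, far worse than the target bound; this allows one to truncate $h$ to a compact range $[n^{-A}, M]$. On this range I would quantize $h$ on a logarithmic grid and append its index to $\sigma$, which adds only $O(\log n)$ bits to $R$ and so does not alter the asymptotic form of the bound. The only place where mild kernel regularity enters is in controlling $\|k_h - k_{\tilde h}\|_2$ along the grid; this quantization error can again be dominated by $n^{-\beta/(2\beta + d)}$ by taking the grid fine enough. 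I expect this bandwidth-discretization step to be the technically fussiest, but since it contributes only logarithmically to $R$, it cannot break the overall form of the lower bound.
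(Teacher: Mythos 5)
Your approach is essentially identical to the paper's: quantize the weight vector on a grid, encode the quantized vector as a bit string $\sigma$ of length $R = O((A+B)m\log n)$, invoke Proposition~\ref{prop:decorated} for the resulting decorated coreset-based estimator, and transfer the bound back to $\hat g_S$ by the triangle inequality. The one detail you gloss over is that for $\tilde g_S$ to be a valid decorated coreset-based estimator it must be invariant under permutation of the coreset points, so you need a canonical rule pairing the encoded weights with specific points of $X_S$; the paper handles this by sorting the coreset by first coordinate before pairing, and your proposal should make this convention explicit.
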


\begin{proof}
Let $\lambda = \lambda_1, \ldots, \lambda_m$ and let $\ti \lambda = \ti \lambda_1, \ldots, \ti \lambda_m$. Observe that
\begin{align}
\norm{  \sum_{j \in S} \lambda_j k_h(X_j - y) - 
 \sum_{j \in S} \ti \lambda_j k_h(X_j - y) }_2  
&\leq  \sum_{j \in S} \abs{\lambda_j - \ti \lambda_j} \norm{k_h(X_j - y)}_2 \nonumber \\
&\leq \abs{\lambda - \ti \lambda}_\infty n^2 h^{-d/2}. \label{eqn:lambda_Lip}
\end{align}

Using this we develop a decorated coreset-based estimator $\hat f_S$ (see Section \ref{sec:coreset_rate_pf}) that approximates $\hat g_S$ well. Set $\delta = c_{\beta, d, L} n^{-4} h^{d/2}$ for $c_{\beta, d, L}$ sufficiently small and to be chosen later. Order the points of the coreset $X_S$ according to their first coordinate. This gives rise to an ordering $\preceq$ so that 
\[
X'_1 \preceq X'_2 \preceq \cdots \preceq X'_m
\]
denote the elements of $X_S$. Let $\lambda \in \mb{R}^m$ denote the correspondingly reordered collection of weights so that
\[
\hat g_S(y) =  \sum_{j = 1}^m \lambda_j k_h(X'_j - y). 
\]

Construct a $\delta$-net  $\mc{N}_\delta$  with respect to the sup-norm $\abs{\cdot}_\infty$ on the set $\{ \nu \in \mb{R}^m: \abs{\nu}_\infty \leq n^{B} \}$. Observe that
\begin{equation}
\label{eqn:log_card}
\log \abs{ \mc{N}_\delta } = \log  (n^B \delta^{-1})^m
= c_{\beta, d, L} \, (B + A) m \log n 
\end{equation}
Define $R$ to be the smallest integer larger than the right-hand-side above. Then we can construct a surjection $\phi: \{0, 1 \}^{R} \to \mc{N}_\delta$. Note that $\phi$ is constructed before observing any data: it simply labels the elements of the $\delta$-net $\mc{N}_\delta$ by strings of length $R$. 

Given $\hat g_S(y) =  \sum_{j \in S} \lambda_j k_h(X_j -y)$, define $\hat f_S$ as follows:
\begin{enumerate}
\item Let $\ti \lambda \in \mb{R}^m$ denote the closest element in $\mc{N}_\delta$ to $\lambda \in \mb{R}^m$. 
\item Choose $\sigma \in \{0, 1\}^{R}$ such that $\phi(\sigma) = \ti \lambda$.
\item Define the decorated coreset $Y_S = (X_S, \sigma)$. 
\item Order the points of $X_S$ by their first coordinate. Pair the $i$-th element of $\ti \lambda$ with the $i$-th element $X_i'$ of $X_S$, and define
\[
\hat f_S(y) = \sum_{j = 1}^m \ti \lambda_j k_h(X_j' - y)
\]
\end{enumerate}

We see that $\hat f_S$ is a decorated-coreset based estimator because in step 4 this estimator is constructed only by looking at the coreset $X_S$ and the bit string $\sigma$. Moreover, by \eqref{eqn:lambda_Lip} and the setting of $\delta$,
\begin{equation}
\label{eqn:deco_estimate}
\norm{ \hat f_S - \hat g_S}_2 \leq c_{\beta, d, L} \, n^{-2}. 
\end{equation}
By Proposition \ref{prop:decorated} and our choice of $R$, 
\[
\sup_{ f \in \mc{P}_{\mc{H}}(\beta, L)} \E_f \norm{\hat f_S - f}_2 \geq c_{\beta, d, L} \left( (A+B)^{-\frac{\beta}{d}} ( m \log n )^{-\frac{\beta}{d}} + n^{-\frac{\beta}{2\beta + d}} \right).
\]
Applying the triangle inequality and \eqref{eqn:deco_estimate} yields Theorem \ref{thm:weight_kde_lbd}.
%4
\end{proof}

\section{PROOFS FROM SECTION 4}

\textit{Notation:} Given a set of points  $X = x_1, \ldots, x_m \in [-1/2, 1/2] $ (not necessarily a sample), we let 
\begin{equation*}
\hat f_X(y) = \frac{1}{m} \sum_{i = 1}^m k_h(X_i - y) 
\end{equation*}
denote the uniformly weighted KDE on $X$. 

\subsection{Proof of Theorem \ref{thm:kde_lbd}}
%Proof of Theorem 
%\noindent \textit{Authors' Note:} In this corrected version of the statement of Theorem 5, we require the condition $k \geq 0$.

\begin{theorem*}
%    \label{thm:kde_lbd}
    Let $k$ denote a nonnegative kernel satisfying
$$
        k(t) = O( \abs{t}^{-(k+1)}), \quad  \text{and} \quad
        \mc{F}[k](\omega) = O(\abs{\omega}^{-\ell})
$$
    for some $\ell > 0,\, k > 1$. Suppose that $0<\alpha<1/3$. If 
    \[
    m \leq \frac{n^{\frac23 - 2\left( \alpha(1 - \frac{2}{\ell}) + \frac{2}{3 \ell} \right)}}{\log n},
    \]
    then
    \begin{equation*}
        \inf_{ h, S: |S|  \leq m } \, \sup_{f \in \mc{P}_{\mc{H}}(1, L)} \mathbb{E} \norm{ \unif - f  }_2= \Omega_k\Big( \frac{n^{-\frac13+\alpha}}{\log n} \Big).  
    \end{equation*}
    The infimum above is over all possible choices of bandwidth $h$ and all coreset schemes $S$ of cardinality at most $m$.
\end{theorem*}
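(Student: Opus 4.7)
The plan is to run a Fano-type minimax lower bound adapted to the rigid structure of uniformly weighted KDEs. The target rate $n^{-1/3+\alpha}/\log n$ is (up to log factors) the same as $\sqrt{n^{2/3}/m}\cdot n^{-1/3}$, i.e.\ the rate at which the discrepancy-based upper bound of Section~\ref{sec:Cara_results} (in the uniformly weighted regime of Theorem~\ref{thm:discrepancy_phitai}) degrades as $m$ drops below the minimax-saturating threshold $n^{2/3}$. Thus the proof should establish the matching lower bound.

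First, I would construct a packing $\mc{G}\subset \mc{P}_{\mc{H}}(1,L)$ of Lipschitz densities via bump perturbations $f_\omega = 1+\sum_j \omega_j g_j$ of the uniform density on $[-1/2,1/2]$, using the same construction as in Section~\ref{appendix:coreset_lower_bound}, but with bump scale $\eta \asymp n^{-1/3+\alpha}/\log n$ matching the target rate. By Varshamov--Gilbert, this yields $|\mc{G}|\ge \exp(c/\eta)$ with pairwise $L^2$ separation of order $\eta$ and KL divergence $\mathsf{KL}(\p_{f_\omega}^{\otimes n}\|\p_{f_{\omega'}}^{\otimes n}) = O(n\eta^2)$.

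Next, I would apply Fano's inequality: the minimax error is at least of order $\eta\cdot(1-(I(V;\unif)+1)/\log|\mc{G}|)$, where $V$ is uniform on $\mc{G}$. For this to be nontrivial I need $I(V;\unif)\lesssim 1/\eta$. The essential new ingredient beyond the proof of Theorem~\ref{thm:coreset_rate} is to exploit the Fourier decay $|\mc{F}[k](\omega)|=O(|\omega|^{-\ell})$. Writing $\mc{F}[\unif](\omega) = \mc{F}[k](h\omega)\,\hat\mu_S(\omega)$ with $|\hat\mu_S(\omega)|\le 1$, the estimator's Fourier coefficients at high frequencies are sharply restricted. I would then split on the chosen bandwidth: if $h$ is large, the kernel decay forces $|\mc{F}[\unif](1/\eta)|$ to be far smaller than the Fourier mass of $f_V$ at scale $1/\eta$, yielding the bias lower bound directly; if $h$ is small, a refined Fano bound combining the coreset-selection entropy of Lemma~\ref{lem:KL_bound} with the parametric constraint that $\unif$ lies on a low-dimensional manifold parametrized by the $m$ atom positions gives the rate, with the corrections $\alpha(1-2/\ell)$ and $4/(3\ell)$ in the condition on $m$ coming precisely from the imperfect Fourier cutoff at finite $\ell$.

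The main obstacle will be the small-$h$ case: precisely combining the coreset entropy bound from Lemma~\ref{lem:KL_bound} (giving $O(m\log n)$ bits) with the additional structural rigidity of the uniform weighting to improve the rate from the general coreset lower bound $(m\log n)^{-\beta/d}$ of Theorem~\ref{thm:coreset_rate} to the parametric-like $(m\log n)^{-1/2}$ required here. The real-space decay assumption on $k$ (the exponent denoted $k+1$ in the hypothesis) enters only as a technical tool allowing one to truncate $k_h$ to a compact support at negligible $L^2$ cost, making the Fourier series expansion bound of Lemma~\ref{lem:kernel_expansion} applicable. The additive $n^{-1/3}$ term in the final bound is inherited from the classical Fano argument for Lipschitz density estimation, which holds uniformly over all estimators and hence in particular for $\unif$.
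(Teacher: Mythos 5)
Your proposal takes a genuinely different route from the paper, and at the crucial step it has a gap that cannot be closed by the tools you name.

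The paper's proof decomposes by bandwidth. For \emph{large} $h$ (Proposition~\ref{prop:large_h}, Lemma~\ref{lem:large_h}), it places a single well-chosen density $f = g + \eps\,\phi(t)\cos(t/\eps)$ with oscillation at scale $\eps = n^{-1/3+\gamma}$ and shows that, since $\mc{F}[\unif](\omega) = \mc{F}[k](h\omega)\,\widehat{\mu}_S(\omega)$ is suppressed at frequency $1/\eps$ by the decay of $\mc{F}[k]$, the estimator cannot track the oscillation; this matches the spirit of your ``bias lower bound directly'' remark. For \emph{small} $h$ (Proposition~\ref{prop:small_h}, Lemma~\ref{lem:step_lbd}), however, the paper does \emph{not} use information theory at all. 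It uses a purely combinatorial mass-balance argument: each kernel bump carries integral exactly $1/m$, so on an interval $U = [0,u]$ where the target density is nearly constant and small (of order $1/(q_1 m h)$), the estimator is forced to either place too many full bumps inside $U$ (Case~1, overshoot) or too few (Case~2, undershoot, leaving a set $V$ of measure $\Omega(u)$ where $\unif$ is near zero but $f$ is not), in either case incurring $L_1$-error $\Omega(u/(q_1 m h))$. This is exactly the rigidity of uniform weighting, exploited directly rather than information-theoretically.

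Your proposed Fano route cannot reproduce this. With a bump-perturbation packing at separation scale $\eta$ you have $\log|\mc{G}| \asymp 1/\eta$, and Lemma~\ref{lem:KL_bound} caps the information passing through a coreset of size $m$ at $O(m\log n)$. Fano is therefore only nontrivial when $\eta \lesssim 1/(m\log n)$, yielding a lower bound of order $(m\log n)^{-1}$ for $\beta = d = 1$ --- precisely the general coreset bound of Theorem~\ref{thm:coreset_rate}. Plugging in $m \approx n^{2/3-2\alpha}/\log n$ gives $(m\log n)^{-1}\approx n^{-2/3+2\alpha}$, which is \emph{weaker} than the target $n^{-1/3+\alpha}$ for every $\alpha<1/3$. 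You identify the need to upgrade to a parametric-like $(m\log n)^{-1/2}$ and attribute this to ``structural rigidity of the uniform weighting,'' but you offer no mechanism for injecting that rigidity into the mutual-information bound, and it is not clear one exists: the decorated-coreset machinery of Section~\ref{sec:coreset_rate_pf} makes no distinction between uniform and nonuniform weights. The paper sidesteps Fano entirely at this step. A secondary inaccuracy: you state that the real-space decay $k(t)=O(|t|^{-(k+1)})$ ``enters only as a technical tool allowing one to truncate $k_h$ ... making the Fourier series expansion bound of Lemma~\ref{lem:kernel_expansion} applicable.'' In fact that decay is essential to the small-$h$ combinatorial argument: it fixes the quantile $q_2$ with $\int_{|t|>q_2}k \le \gamma$ in Lemma~\ref{lem:step_lbd}, which in turn enforces the scale condition $u \ge 8q_2 h$ needed for Case~2 of the mass-balance argument; Lemma~\ref{lem:kernel_expansion} plays no role in Theorem~\ref{thm:kde_lbd}.
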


The proof of Theorem \ref{thm:kde_lbd}
%5
follows directly from Propositions \ref{prop:small_h} and \ref{prop:large_h}, which are presented in Sections \ref{sec:small_h} and \ref{sec:large_h}, respectively. 

\subsubsection{Small bandwidth} 
\label{sec:small_h}

First we show that uniformly weighted coreset KDEs on $m$ points poorly approximate densities that are very close to $0$ everywhere. 

\begin{lemma}
\label{lem:step_lbd}
Let $\hat{f}_X$ denote a uniformly weighted coreset KDE built from an even kernel $k:\mb{R} \to \mb{R}$ with bandwidth $h$ on $m$ points $X = x_1, \ldots, x_m \in \mb{R}$. Suppose that quantiles $0 \leq q_1 \leq q_2$ satisfy
    \begin{align}
        \int_{-q_1}^{q_1} k(t) \ud t &\geq 0.9, \label{eqn:q1} \quad \quad \text{and} \\
        \int_{-q_2}^{q_2} k(t) \ud t &\geq 1 - \gamma. \label{eqn:q2}
    \end{align} 
    
Let $U$ denote an interval $[0, u]$ where
\begin{equation}
    \label{eqn:q2_vs_u}
    u \geq 8q_2h,
\end{equation}
and suppose that $f:U \to \mb{R}$ satisfies 
    \begin{equation}
    \label{eqn:f_bd}
    \frac{1}{100 q_1 mh} \leq f(x) \leq \frac{45}{44} \cdot \frac{1}{100 q_1 mh}  
    \end{equation}
    for all $x \in U$. 
    
    Then
    \begin{equation*}
        \inf_{X: |X| = m} \norm{(\hat{f}_X - f)\1_U}_1 \geq \frac{u}{440 q_1 mh} - \gamma. 
    \end{equation*}
\end{lemma}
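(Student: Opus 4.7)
The plan is to exploit the fact that the KDE $\hat f_X$ is a sum of $m$ localized kernel bumps, and that outside the $q_2h$-neighborhoods of the data points the KDE has essentially no mass, while $f$ is bounded below uniformly on $U$. This gap on the ``far'' region will produce the required $L^1$ discrepancy.

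First I would define the near region $N := \bigcup_{i=1}^m [X_i - q_2 h, X_i + q_2 h]$ and its complement in $U$, the far region $F := U \setminus N$. For any $y \in F$ and any $i$, $|X_i - y| > q_2 h$, so after the change of variable $t = X_i - y$ and applying condition \eqref{eqn:q2},
\[
\int_F \hat f_X = \frac{1}{m}\sum_{i=1}^m \int_F k_h(X_i - y)\,dy \le \frac{1}{m}\sum_{i=1}^m \int_{|t|\ge q_2 h} k_h(t)\,dt \le \gamma.
\]
Meanwhile the lower bound $f \ge 1/(100 q_1 m h)$ gives $\int_F f \ge |F|/(100 q_1 m h)$, so
\[
\|\hat f_X - f\|_{L^1(U)} \ge \int_F (f - \hat f_X) \ge \frac{|F|}{100 q_1 m h} - \gamma,
\]
whenever the right-hand side is non-negative.

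Next I would lower bound $|F|$. Since each interval $[X_i - q_2 h, X_i + q_2 h]$ has length $2 q_2 h$, $|F| \ge u - 2 m q_2 h$. In the ``moderate $m$'' regime $m \le 17 u/(44 q_2 h)$, one has $|F| \ge u/4.4$, and the previous display immediately yields the desired bound $u/(440 q_1 m h) - \gamma$. Here the assumption $u \ge 8 q_2 h$ enters only to ensure that $U$ is wide enough for the $q_2h$-buffer to be a small fraction of $u$.

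For the remaining regime $m > 17 u/(44 q_2 h)$, where the simple far-region bound on $|F|$ becomes vacuous, I would complement the argument by partitioning $U$ into bins of length $\approx 4 q_2 h$ and isolating each bin's central sub-bin of length $2 q_2 h$, which lies at distance $\ge q_2 h$ from any data point outside that bin. Applying the same tail estimate as in Step~2 over the central sub-bins of all empty bins shows that the total $\hat f_X$-mass on their union is still at most $\gamma$, while $\int f$ over this union grows with the number of empty bins, which is at least $\lfloor u/(4 q_2 h)\rfloor - m$ by pigeonhole. When this number becomes negative, i.e., the adversary packs many data points near $U$, the upper bound $f \le (45/44)/(100 q_1 m h)$ together with the fact that each data point in $U^{-q_2 h}$ contributes at least $(1-\gamma)/m$ to $\int_U \hat f_X$ forces $\int_U \hat f_X - \int_U f$ to be large, again producing the required $L^1$ gap. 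The main obstacle is in this last step: reconciling the two complementary arguments with tight enough constants to recover the factor $1/440$ in all parameter ranges, including the delicate interplay between data points near the boundary of $U$ and those just outside.
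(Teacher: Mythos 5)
Your far-region computation is sound for what you call the ``moderate $m$'' regime (your lower bound on $|F|$, the tail estimate $\int_F \hat f_X \le \gamma$, and the use of $f \ge 1/(100 q_1 m h)$ are all correct, and the arithmetic gives exactly the stated constant), and the spirit of this step matches the paper's Case~2. However, there is a genuine gap in the large-$m$ regime: when $m > 17u/(44 q_2 h)$ the $q_2 h$-balls may cover all of $U$ so $|F|$ can vanish, your bin-counting fallback becomes vacuous once $m > u/(4 q_2 h)$, and you explicitly leave the interaction of boundary points and in-$U$ points as an unresolved obstacle. In the application (Proposition~\ref{prop:small_h}), $q_2$ grows with $m$ through the choice $\gamma \sim 1/(mh\log^2(mh))$ and the required range of $m$ reaches $h^{-2}$, so the large-$m$ case is not a corner case but a real part of the parameter regime; as written, the proposal does not constitute a proof.

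The paper sidesteps this by splitting not on $m$ but on $N := |T|$, where $T$ is the set of indices $j$ with $[x_j - q_1 h,\, x_j + q_1 h] \subset U$; note the narrower $q_1 h$-interval, not $q_2 h$. \emph{Case~1} ($N \ge u/(44 q_1 h)$): by \eqref{eqn:q1} each such point contributes at least $0.9/m$ to $\int_U \hat f_X$, so $\int_U \hat f_X \ge 0.9\,u/(44 q_1 m h)$, while $\int_U f \le \tfrac{45}{44}\cdot\tfrac{u}{100 q_1 m h}$; the total-mass mismatch alone already yields $\tfrac{45 u}{4400 q_1 m h} \ge \tfrac{u}{440 q_1 m h}$, with no boundary bookkeeping and no dependence on $m$. \emph{Case~2} ($N < u/(44 q_1 h)$): carve out $V = [2 q_2 h,\, u - 2 q_2 h] \setminus \bigcup_{j\in T}[x_j - q_1 h,\, x_j + q_1 h]$. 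For $j\in T$ the $q_1 h$-core has been removed so the contribution to $\int_V$ is at most $0.1$; for $j\notin T$ the centre $x_j$ lies within $q_1 h$ of $\partial U$, hence at distance at least $q_2 h$ from $V$, and the contribution is at most $\gamma$. This gives $\int_V \hat f_X \le 0.1 N/m + \gamma$, while $\int_V f \ge |V|/(100 q_1 m h)$ and $|V| \ge u/2 - 2 N q_1 h \ge 5u/11$, where the hypothesis $u \ge 8 q_2 h$ is used precisely to absorb the $4 q_2 h$ boundary trim. This dichotomy on $N$ is exhaustive for every configuration $X$, unlike your split on $m$ versus $u/(q_2 h)$. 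The choice of $q_1 h$ rather than $q_2 h$ around the data points is essential: it is what makes each interior point worth a constant $0.9/m$ of mass in Case~1 and keeps the removed region in $V$ small in Case~2.
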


\begin{proof}
Let $N$ denote the number of $x_i \in X$ such that $[x_i - q_1h, x_i + q_1h] \subset [0, u]$. The argument proceeds in two cases. With foresight, we set $\alpha = 1/(44q_1)$. Also let $C_1 = 1/(100q_1)$ and $C_2 = 45/(4400q_1)$. \newline 

\noindent \textit{Case 1:} $N \geq \frac{\alpha u}{h}$. Then by \eqref{eqn:q1} and the nonnegativity of $k$,  
\[
\norm{\hat{f}_X \1_U}_1 \geq \frac{0.9 N}{m} \geq \frac{0.9 \alpha u}{mh}. 
\]
By \eqref{eqn:f_bd}, 
\[
\norm{f}_1 \leq \frac{C_2 u}{mh}. 
\]
Hence, 
\begin{equation*}
   \norm{(\hat{f}_X - f)\1_U}_1 \geq \frac{u}{mh}(0.9 \alpha  - C_2)
   = C_2 \frac{u}{ mh} = \frac{45}{4400} \cdot \frac{u}{q_1 mh}. 
\end{equation*}
Thus Lemma \ref{lem:step_lbd} holds in Case 1 where $N \geq \alpha u/h$. \newline 

\noindent \textit{Case 2:} $N \leq \frac{\alpha u}{h}$.
Let
\[
V = [2hq_2, u - 2hq_2 ] \, \backslash \bigcup_{j \in T} [ x_j - q_1h ,  x_j + q_1h]
\]
where $T$ is the set of indices $j$ so that $[x_j - q_1h, x_j + q_1h] \subset U$. Observe that if $j \notin T$, then by \eqref{eqn:q2},
\[
\int_{V} \frac{1}{h} k\left( \frac{x_j - t}{h} \right) \ud t \leq 
\gamma. 
\]
If $j \in T$, then by \eqref{eqn:q1}, 
\[
\int_{V} \frac{1}{h} k\left( \frac{x_j - t}{h} \right) \ud t \leq 0.1. 
\]
Thus,
\begin{equation*}
\norm{\hat{f}_X \1_V}_1 \leq \frac{0.1 N}{m} + \gamma
    \leq \frac{\alpha 0.1 u}{mh} + \gamma.
\end{equation*}
By the union bound, observe that the Lebesgue measure of $V$ is at least 
\[u - 4hq_2 - 2Nhq_1 \geq \frac{u}{2} - 2Nhq_1 \geq u( \frac{1}{2} - 2\alpha q_1 ). 
\]Next, by \eqref{eqn:f_bd},
\[
\norm{f \1_V}_1 \geq C_1\frac{u}{mh}(\frac{1}{2} - 2\alpha q_1). 
\]
Therefore,
\begin{equation}
    \label{eqn:case2_L1}
    \norm{ (\hat{f}_X -f)\1_U }_1 \geq \frac{u}{mh}( C_1(1/2 - 2\alpha q_1) -0.1\alpha  ) -  \gamma =  \frac{u}{440 q_1 mh} - \gamma. 
\end{equation}
\end{proof}

\begin{proposition}
    \label{prop:small_h}
    Let $L > 2$. Let $0< \delta <1/3$ denote an absolute constant. Let $\hat{f}_X$ denote a uniformly weighted coreset KDE with bandwidth $h$ built from a kernel $k$ on $X = x_1, \ldots, x_m$. Suppose that $k(t) \leq \Delta |t|^{-(k+1)}  $ for some absolute constants $\Delta > 0, k\geq 1$. If $h \leq n^{-1/3 + \delta}$, then for 
    \[
    m \leq \frac{n^{2/3 - 2 \delta }}{\log n}
    \] it holds that
    \begin{equation}
        \label{eqn:small_h} 
        \sup_{f \in \mc{P}_{\mc{H}}(1,L)} \, \inf_{ X: |X| = m } \, \norm{\hat{f}_X - f}_2 = \Omega\left( \frac{n^{-1/3 + \delta}}{\log n} \right).  
    \end{equation}
\end{proposition}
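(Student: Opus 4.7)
The plan is to apply Lemma \ref{lem:step_lbd} to a carefully chosen Lipschitz density $f$ on a fixed subinterval $U$, and then use Cauchy--Schwarz to pass from the $L^1$ bound provided by the lemma to an $L^2$ bound. Intuitively, when $mh$ is small, the kernel density estimator is supported on a thin region with bumps of height $\sim 1/(mh)$, so any Lipschitz density $f$ that is nearly uniform on a macroscopic interval must suffer an error of order $1/(mh)$ on that interval.

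First I would select the kernel quantiles: pick $q_1 = O(1)$ so that $\int_{-q_1}^{q_1} k \geq 0.9$ (possible since $k$ is a nonnegative kernel of total mass $1$), and pick $q_2 = \Theta_k((mh)^{1/k})$ so that the tail mass $\gamma = \int_{|t|>q_2} k(t)\,dt$ is a small multiple of $1/(mh)$, using the hypothesis $k(t) = O(|t|^{-(k+1)})$. Next, with $U := [0, 1/4]$ and $c_1 := 1/(100 q_1 mh)$, I would take $f$ piecewise affine on $[-1/2, 1/2]$: equal to $c_1$ on $[0, 1/2]$, rising at slope $L$ on a short segment immediately to the left of $0$, and equal to a plateau value $M > c_1$ on the rest of $[-1/2, 0]$, with $M$ adjusted so $\int f = 1$. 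For $L > 2$ this yields a valid element of $\mc{P}_{\mc{H}}(1, L)$ satisfying \eqref{eqn:f_bd} on $U$. A short calculation using $h \leq n^{-1/3+\delta}$ and $m \leq n^{2/3-2\delta}/\log n$ shows $h^{k+1} m = o(1)$ when $k > 1$ and $\delta < 1/3$, which implies the geometric hypothesis $u \geq 8 q_2 h$ of Lemma \ref{lem:step_lbd}.

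Lemma \ref{lem:step_lbd} then gives $\inf_X \|(\hat f_X - f)\1_U\|_1 \geq u/(440 q_1 mh) - \gamma = \Omega(1/(mh))$, and Cauchy--Schwarz on $U$ converts this to
\[
\|\hat f_X - f\|_2 \geq \frac{\|(\hat f_X - f)\1_U\|_1}{\sqrt{u}} = \Omega\!\left(\frac{1}{mh}\right) \geq \Omega\!\left(\frac{\log n}{n^{1/3 - \delta}}\right),
\]
using $mh \leq n^{1/3-\delta}/\log n$; this is in fact stronger than the claimed lower bound. The degenerate regime $mh = O(1)$, in which the construction above fails because $c_1 > 1$, is handled separately by arranging the effective support of $\hat f_X$ (which has Lebesgue measure $O(m q_2 h)$) to be strictly less than $1/2$, and comparing with a trapezoidal density of height $\Theta_L(1)$ to obtain the trivial bound $\|\hat f_X - f\|_2 = \Omega(1)$.

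The main obstacle is simultaneously satisfying the geometric condition $u \geq 8 q_2 h$ of Lemma \ref{lem:step_lbd} and the density constraint $c_1 \leq 1$ throughout the specified range of $h$ and $m$, which forces the case split described above.
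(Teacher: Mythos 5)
Your overall strategy is the same as the paper's: feed a suitable density into Lemma \ref{lem:step_lbd} and convert the resulting $L^1$ bound to $L^2$ via Cauchy--Schwarz. The divergence is in the choice of density, and that is where the gap lies. You ask for $f$ to equal $c_1 = 1/(100 q_1 m h)$ on a \emph{constant-length} interval ($[0,1/2]$ or $[0,1/4]$) while remaining a density supported on $[-1/2,1/2]$ with Lipschitz constant $L$. Since the H\"older condition in the definition of $\mc{H}(1,L)$ holds for all $x,y\in\mb{R}$, $f$ must vanish at $\pm 1/2$, so the mass it can carry outside your flat region is at most of order $L$ times the square of the available length; making $f\approx 0$ on half (or a quarter) of the support while still integrating to $1$ forces $L$ to exceed a fairly large absolute constant (around $16$ for the $[0,1/2]$ version). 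Your construction therefore does not cover the smaller values of $L$ permitted by the statement, and--more tellingly--the bound you derive, $\Omega(1/(mh)) \geq \Omega(\log n \cdot n^{-1/3+\delta})$, is \emph{stronger} than what the proposition asserts. That should have been a warning sign: the tension between flatness at level $1/(mh)$ over a macroscopic window and the Lipschitz constraint is exactly the obstruction, and it is why logarithmic losses appear in the true bound.

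The paper resolves this by using a single fixed density that decays like $e^{-1/t}$ near $t=0$ and taking $U$ to be the $(m,h)$-dependent window $[1/\log(\lambda mh/C_1),\,1/\log(\lambda mh/C_2)]$ on which $f$ lies in the narrow band $[C_1/(mh),\,(45/44)C_1/(mh)]$ required by \eqref{eqn:f_bd}. This window has length $u=\Theta(1/\log^2(mh))$, so Lemma \ref{lem:step_lbd} yields only $\Omega\bigl(1/(mh\log^2(mh))\bigr)$ in $L^1$, which after the range bookkeeping in $m$ and $h$ gives precisely the $\Omega(n^{-1/3+\delta}/\log n)$ of the statement. Your treatment of the kernel tail ($q_2=\Theta_k((mh)^{1/k})$, verification of $u\ge 8q_2h$ from $h^{k+1}m=o(1)$) and your handling of the degenerate regime $mh=O(1)$ are both fine in spirit (the paper instead duplicates points to reduce to $m=\Omega(1/h)$), but the core density construction needs to be replaced by one that only requires $f$ to sit in the prescribed band over a shrinking window, at the cost of the logarithmic factors.
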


\begin{proof}
    Let 
    \[f(t) = \lambda \left(  e^{-1/t}\1(t \in [-1/2,0]) + e^{-1/(1 - t)} \1(t \in [0, 1/2]) \right),\]
    where $\lambda$ is a normalizing constant so that $\int f = 1$. Observe that $f \in \mc{P}_{\mc{H}}(1,L)$. Our first goal is to show that 
    \[
    \norm{\hat{f}_X - f}_1 = \Omega\left(\frac{1}{mh \log^2(mh)}\right) 
    \]
    holds for all $\tau/h \leq m \leq h^{-2}$ and for all $h \leq n^{-1/3 + \delta}$, where $\tau$ is an absolute constant to be determined.  
    
    We apply Lemma \ref{lem:step_lbd} to the density $f$.  Let $q_1$ be defined as in Lemma \ref{lem:step_lbd}, and set $C_1 = 1/(100q_1)$ and $C_2 = 45/(4400q_1)$. Set $\tau = 10C_2/\lambda$. Let 
    \[
    U = [t_1, t_2] :=  \left[ \frac{1}{\log(\lambda mh/C_1)} , \, \, \frac{1}{\log(\lambda mh/C_2)} \right].
    \]
    The function $f|_U$ satisfies the bounds \eqref{eqn:f_bd} from Lemma \ref{lem:step_lbd}. Observe that the length of $U$ is
    \[
    u:= t_2 - t_1 = \Omega(\frac{1}{\log^2(mh)}). 
    \]We set the parameter $\gamma$ in Lemma \ref{lem:step_lbd} to be 
    \[
        \gamma = \frac{1}{800 q_1 mh \log^2(mh)}. 
    \]
    By the decay assumption on $k$, we may set
    \[
    q_2 := \left( \frac{2 \Delta}{k\gamma} \right)^{1/k}. 
    \]
    Therefore, 
    \begin{align} 
    u - 8q_2h &= \Omega( \frac{1}{\log^2(mh)}) - 8h \left( \frac{2 \Delta}{k\gamma} \right)^{1/k}\\
    &= \Omega( \frac{1}{\log^2(mh)} ) - O( h (mh \log^2(mh))^{1/k} ) \\
    &= \Omega( \frac{1}{\log^2(h^{-1})} ) - O( h^{1 - 1/k} \log^2(h^{-1}) ) > 0
    \end{align} 
    for $n$ sufficiently large, because we assume $\tau/h \leq m \leq h^{-2}$, $h \leq n^{-1/3+\delta}$, and $k>1$. Hence, condition \eqref{eqn:q2_vs_u} is satisfied for $m, h$ in the specified range, so we apply Cauchy--Schwarz and Lemma \ref{lem:step_lbd} to conclude that for all $\tau/h \leq m \leq h^{-2}$ and $h \leq n^{-1/3+\delta}$,
    \begin{equation}
    \label{eqn:small_h_bd}
    \norm{\hat{f}_X -f}_2 \geq \norm{\hat{f}_X - f}_1 = \Omega\left(\frac{1}{mh \log^2(mh)}\right) = \Omega\left(\frac{1}{mh \log^2(h^{-1})}\right). 
    % \geq \Omega\left( \frac{n^{-1/3}}{\log^2(n)} \right)
    \end{equation}
    
    Suppose first that $\log^{2}(1/h) \geq n^{1/3 - \delta}$. Then clearly the right-hand side of \eqref{eqn:small_h_bd} is $\Omega(1)$ for $m \leq n$. Otherwise, we have for all $h \leq n^{-1/3 + \delta}$ that if $m$ is in the range 
    \[
    \frac{\tau}{h} \leq m \leq \min\left( \frac{n^{1/3 - \delta} \log n}{h \log^2(1/h)}, \, \,  h^{-2} \right) =: N_h ,
    \]
    then \eqref{eqn:small_h_bd} implies
    \begin{equation}
        \label{eqn:smallh_main}
        \norm{\hat{f}_X - f}_2 = \Omega\left( \frac{n^{-1/3+\delta}}{\log n} \right). 
    \end{equation}
     Moreover, a uniformly weighted coreset KDE on $m = O(1/h)$ points can be expressed as a uniformly weighted coreset KDE on $\Omega( 1/h )$ points by setting some of the $x_i$'s to be duplicates. Hence \eqref{eqn:smallh_main} holds for all $1 \leq m \leq N_h$. Since $N_h$ is a decreasing function of $h$, it follows that \eqref{eqn:smallh_main} holds for all $m \leq n^{2/3 - 2\delta}/\log n$ and $h \leq n^{-1/3 + \delta}$, as desired. 
    
    % Setting $h = n^{-1/3+\delta}$, we see that the right-hand side of \eqref{eqn:small_h_bd} is larger than $\tilde{\Omega}(n^{-1/3})$ provided that $m \leq n^{2/3 - \delta}$. 
    
    % Setting $h = n^{-1/3}$ in \eqref{eqn:small_h_bd}, indeed the desired bound from Proposition \ref{prop:small_h} holds for $m \leq h^{-2} = n^{2/3}$. This suffices to prove the proposition because . 
\end{proof}

\subsubsection{Large bandwidth}
\label{sec:large_h}

\begin{lemma}
\label{lem:large_h}
Let $\eps = \eps(n) > 0$, and let $\hat{f}_X$ denote the uniformly weighted coreset KDE on $X$ with bandwidth $h$. Suppose that $\phi:\mb{R} \to \mb{R}$ is an odd $\mc{C}^{\infty}$ function supported on $[-1/4, 1/4]$. Let $f(t):[-1/2, 1/2] \to \mb{R}_{\geq 0}$ denote the density 
\[
f(t) = \frac{12}{11}(1 - t^2) + \eps \phi(t) \cos\left( \frac{t}{\eps} \right). 
\]

Then 
\begin{multline}
    \label{eqn:oversmth_gen}
    \norm{ \hat{f}_X - f }_2^2 \geq \frac{1}{2} \eps^2 \left(\norm{\phi}_2^2 - \abs{\mc{F}[\phi^2](2 \eps^{-1})} \right) \\ - \norm{\phi}_1 \sup_{ |\omega| \geq h \eps^{-1}/2} \abs{ \mc{F}[k](\omega)  } -  2 \eps \int_{ |\omega| \geq \eps^{-1}/2 } \abs{ \mc{F}[\phi](\omega)} \ud \omega. 
\end{multline}
\end{lemma}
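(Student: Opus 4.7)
The plan is to decompose $f = f_0 + g$ where $f_0(t) = \tfrac{12}{11}(1-t^2)\mathbf{1}_{[-1/2,1/2]}(t)$ and $g(t) = \eps\phi(t)\cos(t/\eps)$, expand the squared error, and exploit parity to drop one cross term:
\[
\|\hat f_X - f\|_2^2 = \|\hat f_X - f_0\|_2^2 + \|g\|_2^2 - 2\langle \hat f_X - f_0, g\rangle \geq \|g\|_2^2 - 2|\langle \hat f_X, g\rangle| - 2|\langle f_0, g\rangle|.
\]
Since $f_0$ and $\cos(\cdot/\eps)$ are even while $\phi$ is odd, the integrand of $\langle f_0, g\rangle$ is odd on the symmetric domain $[-1/2, 1/2]$, hence $\langle f_0, g\rangle = 0$. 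It therefore suffices to lower bound $\|g\|_2^2$ and upper bound $|\langle \hat f_X, g\rangle|$.

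For the main term, expanding $\cos^2(s) = (1+\cos(2s))/2$ and recognizing the remainder integral as the real part of a Fourier coefficient of $\phi^2$ gives
\[
\|g\|_2^2 = \tfrac{\eps^2}{2}\|\phi\|_2^2 + \tfrac{\eps^2}{2}\,\mathrm{Re}\,\mathcal{F}[\phi^2](2\eps^{-1}) \geq \tfrac{\eps^2}{2}\bigl(\|\phi\|_2^2 - |\mathcal{F}[\phi^2](2\eps^{-1})|\bigr),
\]
matching the first term of the stated bound.

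For the cross term, since $k$ is even, $\langle \hat f_X, g\rangle = \frac{1}{m}\sum_{i}(k_h * g)(X_i)$, so $|\langle \hat f_X, g\rangle| \leq \|k_h * g\|_\infty$. To bound the sup norm I would use the modulation identity $\mathcal{F}[g](\omega) = \tfrac{\eps}{2}(\mathcal{F}[\phi](\omega-\eps^{-1}) + \mathcal{F}[\phi](\omega+\eps^{-1}))$, apply Fourier inversion to $(k_h * g)(x)$, and change variables $\omega = \omega' \pm \eps^{-1}$ to recast the integral as one against $\mathcal{F}[\phi](\omega')$ involving the factors $\mathcal{F}[k](h(\omega' \pm \eps^{-1}))$. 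I would then split at $|\omega'| = \eps^{-1}/2$: on the low band, both shifted arguments $h(\omega' \pm \eps^{-1})$ lie in $\{|\nu| \geq h\eps^{-1}/2\}$, so each $|\mathcal{F}[k]|$ factor is at most $\sup_{|\omega| \geq h\eps^{-1}/2}|\mathcal{F}[k](\omega)|$; combining with the dual bound $\|\mathcal{F}[\phi]\|_\infty \leq \|\phi\|_1$ over the $\omega'$-interval of length $\eps^{-1}$ produces the term $\|\phi\|_1 \sup_{|\omega| \geq h\eps^{-1}/2}|\mathcal{F}[k](\omega)|$. On the high band I would simply use $|\mathcal{F}[k]| \leq \|k\|_1 = 1$ to pick up the residual $\eps \int_{|\omega| \geq \eps^{-1}/2}|\mathcal{F}[\phi](\omega)|d\omega$ contribution.

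The only technically delicate step is the Fourier split: one must check that the low-$\omega'$ regime is precisely where the shifted kernel evaluations fall into the tail of $\mathcal{F}[k]$, and invoke $\|\mathcal{F}[\phi]\|_\infty \leq \|\phi\|_1$ at the right place so that the final bound depends on $\phi$ (not its Fourier transform) in that term. Everything else is a direct computation, parity, and the triangle inequality.
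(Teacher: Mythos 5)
Your proposal is correct and follows essentially the same route as the paper: decompose $f$ into the parabolic baseline and the oscillatory perturbation, drop the nonnegative term $\|\hat f_X - f_0\|_2^2$, use parity to kill $\langle f_0, g\rangle$, lower-bound $\|g\|_2^2$ via the $\cos^2$ identity and the Fourier coefficient of $\phi^2$ at $2\eps^{-1}$, and control the remaining cross term by a Fourier split at frequency $\eps^{-1}/2$. The one cosmetic difference is that you route the cross-term bound explicitly through $|\langle\hat f_X, g\rangle| \leq \|k_h * g\|_\infty$ before applying Fourier inversion, whereas the paper applies Parseval to $\langle\hat f_X, g-f\rangle$ directly; these are the same calculation, and your observation that the shifted arguments $\omega'\pm\eps^{-1}$ land in the tail of $\mathcal{F}[k]$ on the low band is exactly the point the paper's split exploits.
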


\begin{proof}
Let $g(t) = (12/11)(1 - t^2)$ and $\psi(t) = \eps \phi(t) \cos(t/\eps)$. 
    Observe that
    \begin{align}
        \norm{ \hat{f}_X - f }_2^2 &\geq \norm{g - f}_2^2 - 2 \langle \hat{f}_X , g - f \rangle +2 \langle g, \psi(t) \rangle \nonumber  \\
        &= \norm{g - f}_2^2 - 2 \langle \hat{f}_X, g - f \rangle 
        \label{eqn:L2_sq} 
    \end{align}
    because $g(t) \psi(t)$ is an odd function.
    Next, using $\cos^2(\theta) = (1/2) (\cos(2\theta) + 1)$, 
    \begin{align}
        \label{eqn:rho_norm}
        \norm{g - f}_2^2 &= \eps^2 \int_{-1/2}^{1/2} \cos^2(t/\eps) \phi^2(t) \ud t \nonumber \\
        &\geq \frac{\eps^2}{2} \norm{\phi}_2^2 - \frac{\eps^2}{2}\abs{ \mc{F}[\phi^2](2\eps^{-1})}. 
    \end{align}
    
    By the triangle inequality and Parseval's formula,
    \begin{equation*}
        \frac{\abs{ \langle \hat{f}_X, g - f \rangle}}{\eps} 
        \leq  \Big( \underbrace{\int_{|\omega| \leq h\eps^{-1}/2}}_{=:A} + \underbrace{\int_{|\omega| \geq h\eps^{-1}/2}}_{=:B} \Big) \Big| \mc{F}[k] \left(- \frac{h}{\eps} - \omega \right) \frac{1}{h} \mc{F}[\phi]\left( -\frac{\omega}{h} \right)\Big| \, \ud\omega. 
    \end{equation*}
    
    % Next, observe by Parseval's formula that
    % \begin{align}
    %     \abs{ \left\langle \frac{1}{h} k\left( \frac{x_i - t}{h}\right), 1 - f(t) \right\rangle }  &= 
    %     \eps \abs{ \left\langle k(x_i - t) \cos\left( \frac{ht}{\eps}  \right) , \phi(ht)   \right\rangle   } \nonumber \\
    %     &\leq \eps \int_{\mb{R}} \abs{ \mc{F}[k]\left(- \frac{h}{\eps} - \omega \right)  }  \abs{ \frac{1}{h} \mc{F}[\phi]\left( -\frac{\omega}{h} \right)  } \ud \omega.  
    % \end{align}
    Moreover,
    \begin{align}
    \label{eqn:low_freq}
        A &\leq 
        \frac{1}{2\eps} \norm{ \phi }_1 \cdot \sup_{ |\omega| \geq h \eps^{-1}/2} \abs{ \mc{F}[k](\omega)  }, \\
        \label{eqn:high_freq} B 
        &\leq \norm{k}_1 \cdot \int_{ |\omega| > \eps^{-1}/2 } \abs{ \mc{F}[\phi](\omega)} \ud \omega. 
    \end{align}
    Then \eqref{eqn:oversmth_gen} follows from $\norm{k}_1 = 1$ and equations \eqref{eqn:L2_sq}, \eqref{eqn:rho_norm}, \eqref{eqn:low_freq}, and \eqref{eqn:high_freq}. 
\end{proof}

\begin{proposition}
\label{prop:large_h}
Let $\eps = n^{-1/3 + \gamma}$ for some absolute constant $\gamma > 0$. Let $\hat{f}_X$ denote a uniformly weighted coreset KDE with bandwidth $h$ built from a kernel $k$ on $X = x_1, \ldots, x_m$. Suppose that $\abs{\mc{F}[k](\omega)} \leq |\omega|^{-\ell}$. If $h \geq c \eps^{1 - 2/\ell} = cn^{(-1/3 + \gamma)(1 - 2/\ell)}$ for $c$ sufficiently large, then for all $m$ it holds that
\begin{equation}
    \label{eqn:big_h_lbd}
     \sup_{f \in \mc{P}_{\mc{H}}(\beta,L)} \, \inf_{X: |X| = m  }  \norm{ \hat{f}_X - f  }_2 = \Omega( \eps ) = \Omega\left( n^{-1/3 + \gamma} \right)
\end{equation}

\end{proposition}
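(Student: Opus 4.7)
The plan is to invoke Lemma~\ref{lem:large_h} with the explicit density
\[
f(t) = \tfrac{12}{11}(1-t^2) + \eps\,\phi(t)\cos(t/\eps), \qquad t \in [-1/2,1/2],
\]
where $\phi \in \mathcal{C}^\infty$ is a fixed (i.e.\ $n$-independent) nonzero odd function supported on $[-1/4,1/4]$, and then to show that the main oscillatory term $\tfrac{1}{2}\eps^2\|\phi\|_2^2$ appearing on the right-hand side of \eqref{eqn:oversmth_gen} dominates both error terms exactly when $h \ge c\,\eps^{1-2/\ell}$ with $c$ sufficiently large. This reduces the proposition to a routine comparison of three Fourier decay rates.

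First I would verify $f \in \mathcal{P}_{\mathcal{H}}(1,L)$ for $L$ large. Oddness of $\phi$ and evenness of $\cos$ force $\int \eps \phi(t)\cos(t/\eps)\,\mathrm{d}t = 0$, so $\int f = 1$. For $\eps$ sufficiently small the oscillating perturbation has $L^\infty$ norm $\le \eps\|\phi\|_\infty$ and is dominated by the parabolic part, giving $f \ge 0$. Differentiating produces three bounded contributions: the polynomial term, an $O(\eps)$ term from $\eps\phi'(t)\cos(t/\eps)$, and the only nontrivial one, $-\phi(t)\sin(t/\eps)$, whose uniform bound is $\|\phi\|_\infty$; hence $f$ is Lipschitz with constant independent of $\eps$, so $f \in \mathcal{P}_{\mathcal{H}}(1,L)$ for $L$ large enough.

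Next I would bound the three terms on the right-hand side of \eqref{eqn:oversmth_gen}. Since $\phi^2 \in \mathcal{C}^\infty_c$, $|\mathcal{F}[\phi^2](\omega)|$ decays faster than any polynomial, so $|\mathcal{F}[\phi^2](2\eps^{-1})| \le \tfrac{1}{2}\|\phi\|_2^2$ for $n$ large, and the first term is at least $\tfrac14 \eps^2 \|\phi\|_2^2$. The Fourier decay hypothesis $|\mathcal{F}[k](\omega)| \le |\omega|^{-\ell}$ yields
\[
\sup_{|\omega|\ge h\eps^{-1}/2}|\mathcal{F}[k](\omega)| \le 2^\ell h^{-\ell}\eps^\ell,
\]
and the bandwidth assumption $h \ge c\eps^{1-2/\ell}$ converts this to $h^{-\ell}\eps^\ell \le c^{-\ell}\eps^2$, so the second (subtracted) term is at most $2^\ell c^{-\ell}\|\phi\|_1 \eps^2 \le \tfrac18 \eps^2\|\phi\|_2^2$ for $c$ sufficiently large. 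Finally, rapid decay of $\mathcal{F}[\phi]$ (again because $\phi \in \mathcal{C}^\infty_c$) gives $\int_{|\omega|\ge \eps^{-1}/2}|\mathcal{F}[\phi](\omega)|\,\mathrm{d}\omega = o(\eps)$, so the third term is $o(\eps^2)$. Combining the three bounds yields
\[
\|\hat{f}_X - f\|_2^2 \ge \tfrac{1}{16}\eps^2\|\phi\|_2^2
\]
uniformly in $X$ with $|X| = m$ (for any $m$), hence $\|\hat{f}_X - f\|_2 = \Omega(\eps)$ as claimed.

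There is no genuine obstacle, as Lemma~\ref{lem:large_h} has already absorbed the Fourier analysis; the only conceptual point is that the threshold $h \gtrsim \eps^{1-2/\ell}$ is precisely what is needed to push the polynomial Fourier tail of $k$ below the $\Theta(\eps^2)$ signal produced by the oscillation, whereas the super-polynomial Fourier tails of $\phi, \phi^2$ are essentially free. If one preferred a quantitative statement independent of ``$n$ sufficiently large,'' one could pick any $\phi$ for which $\|\phi\|_2^2$ is known explicitly and choose $N$ (the decay exponent used for $\mathcal{F}[\phi^2]$) as an explicit function of $\gamma$ and $\ell$.
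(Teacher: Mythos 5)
Your proposal follows the paper's proof essentially verbatim: instantiate Lemma~\ref{lem:large_h} with a density built from an odd smooth compactly supported bump modulated by $\cos(t/\eps)$, then observe that the super-polynomial Fourier decay of $\phi$ and $\phi^2$ makes the last two terms of \eqref{eqn:oversmth_gen} negligible while the bandwidth condition $h \gtrsim \eps^{1-2/\ell}$ pushes the $|\mathcal{F}[k]|$ term below $\Theta(\eps^2)$. The only cosmetic difference is that the paper fixes an explicit bump function and quotes concrete numerical decay constants, whereas you argue qualitatively from $\phi \in \mathcal{C}^\infty_c$ and are a bit more explicit in verifying $f \in \mathcal{P}_{\mathcal{H}}(1,L)$; the substance is identical.
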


\begin{proof}
The proof is a direct application of Lemma \ref{lem:large_h}. Let $f(t) = g(t) + \eps \phi(t) \cos(t/\eps)$, where we set 
\begin{equation*}
    \phi(t) =  - e^{ \frac{1}{x(x + 1/4)} }\1(x \in [-1/4, 0]) + e^{ - \frac{1}{x(x - 1/4)} }\1(x \in [0, 1/4]). \nonumber
\end{equation*} 

Observe that $\phi$ is odd and $\phi \in \mc{C}^\infty$. Thus, $\phi^2 \in \mc{C}^\infty$, so by the Riemann--Lebesgue lemma \citep[see e.g.][Chapter VI]{Kat04}, $\mc{F}[\phi^2](\eps^{-1}) \leq 10 \eps$. 
Using a similar argument and noting that $\mc{F}[\phi](\omega) =\omega^{-2}\mc{F}[\phi''](\omega)\le 10\omega^{-3}$, we obtain
\[
    \int_{\abs{\omega} \geq 2 \eps^{-1}} \abs{ \mc{F}[\phi](\omega)} \ud \omega \leq 100 \eps^2. 
\]
Also $\norm{\phi}_2 \geq c'$ for a small absolute constant, and $\norm{\phi}_1 \leq 2$.

Thus Lemma \ref{lem:large_h}, the hypothesis on $k$, and $h \geq c' \eps^{1 - 2/\ell}$ imply that 
\[
\norm{\hat{f}_X - f}_2^2 \geq \frac{c^2}{2}\eps^2 - 2\left(\frac{\eps}{h}\right)^\ell - 200 \eps^3 = \Omega(\eps^2).    
\]
Since $f \in \hd(1,L)$, the statement of the lemma follows.
\end{proof}

\subsection{Proof of Theorem \ref{thm:kde_lbd_allbeta}}
%6

%\noindent \textit{Authors' Note:} In this corrected version of the statement of Theorem 6, we require the condition $k \geq 0$ and also require a power of $\beta + \frac{1}{2}$ on the logarithmic term in the lower bound.

\begin{theorem*}
%\label{thm:kde_lbd_allbeta}
Fix $\beta>0$ and a nonnegative kernel $k$ on $\mathbb{R}$ satisfying the following fast decay and smoothness conditions:
\begin{align}
\lim_{s\to+\infty}\frac1{s}\log\frac1{\int_{|t|>s}k(t)dt}&>0,
\label{e_decay} 
\\
\lim_{\omega\to\infty}\frac1{|\omega|}\log\frac1{|\mathcal{F}[k](\omega)|}&>0,
\label{e_smooth}
\end{align}
where we recall that $\mathcal{F}[k]$ denotes the Fourier transform.
Let $\hat{f}_S^{\mathsf{unif}}$ be the uniformly weighted coreset KDE.
Then there exists $L_{\beta}>0$ such that for $L\ge L_{\beta}$ and any $m$ and $h>0$, we have
\begin{align*}
\inf_{ h, S: |S| \leq  m } \, \sup_{f \in \mc{P}_{\mc{H}}(\beta, L)} \mathbb{E} \norm{ \hat{f}_S^{\mathsf{unif}} - f  }_2
%&=\Omega\left(\tfrac1{\log n}\left(\tfrac1{mh}\vee
%h^{\beta}\right)\right)
%\\
&=\Omega_{\beta, k}\left(\tfrac{m^{-\frac{\beta}{1+\beta}}}{\log^{\beta + \frac{1}{2}} m}\right).
\end{align*}
\end{theorem*}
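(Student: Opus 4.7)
The plan is to generalize the two-regime strategy behind Theorem~\ref{thm:kde_lbd} (namely Propositions~\ref{prop:small_h} and~\ref{prop:large_h}) to arbitrary smoothness $\beta>0$, replacing the polynomial decay hypotheses on $k$ there by the exponential hypotheses~\eqref{e_decay}--\eqref{e_smooth}. First, since $X_S\subseteq\{X_1,\dots,X_n\}$, one has
\[
\E_f\|\hat f_S^{\mathsf{unif}}-f\|_2\;\geq\;\inf_{X:|X|=m}\|\hat f_X-f\|_2
\]
for every realization of the sample and every $f$, so it suffices to prove a deterministic approximation-theoretic lower bound for every $h>0$ and every $m$-point set $X$. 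I then split on the bandwidth.

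For the small-$h$ regime I keep the density $f_0(t)=\lambda\bigl(e^{-1/t}\mathbf{1}_{[-1/2,0]}(t)+e^{-1/(1-t)}\mathbf{1}_{[0,1/2]}(t)\bigr)$ from Proposition~\ref{prop:small_h}; being $\mc{C}^\infty$ with compact support, it lies in $\mc{P}_{\mc{H}}(\beta,L)$ for every $L\geq L_\beta$. I generalize Lemma~\ref{lem:step_lbd} under hypothesis~\eqref{e_decay}: the quantile $q_2$ satisfying $\int_{|t|\geq q_2}k(t)\,\mathrm{d}t\leq\gamma$ now grows like $\log(1/\gamma)$ rather than polynomially, which is what feeds into the step-function bound. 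Applied on the level set $U=\{t:f_0(t)\in[c_1/(mh),c_2/(mh)]\}$, of length $u\asymp 1/\log^2(mh)$, with $\gamma\asymp u/(mh)$, and converting the resulting $L^1$ bound to $L^2$ by Cauchy--Schwarz on $U$, I obtain $\|\hat f_X-f_0\|_2\gtrsim 1/(mh\log(mh))$ in the admissible range of $h$. For the large-$h$ regime I use the oscillating perturbation
\[
f(t)\;=\;g(t)+\varepsilon^{\beta}\phi(t)\cos(t/\varepsilon)
\]
for a fixed smooth density $g$ (e.g.\ $g(t)\propto 1-t^2$) and an odd $\phi\in\mc{C}_c^\infty$ supported on $[-1/4,1/4]$; the amplitude $\varepsilon^\beta$ keeps $f$ in $\mc{P}_{\mc{H}}(\beta,L)$ uniformly in $\varepsilon$ provided $L\geq L_\beta$. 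Generalizing Lemma~\ref{lem:large_h} to Hölder order $\beta$ gives
\[
\|\hat f_X-f\|_2^2\;\geq\;\tfrac12\varepsilon^{2\beta}\|\phi\|_2^2\;-\;\|\phi\|_1\sup_{|\omega|\geq h/(2\varepsilon)}|\mc{F}[k](\omega)|\;-\;\text{lower order},
\]
and the exponential Fourier decay~\eqref{e_smooth} lets me pick $\varepsilon=c_k\,h/\log(1/h)$ so that the correction term is negligible compared to $\varepsilon^{2\beta}$. This yields $\|\hat f_X-f\|_2\gtrsim(h/\log(1/h))^{\beta}$.

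Balancing the two bounds (equating $1/(mh\log)$ with $h^\beta/\log^\beta$, up to logs) optimizes at $h$ of order $m^{-1/(1+\beta)}$ times polylogs and produces the claimed rate $m^{-\beta/(1+\beta)}/\log^{\beta+1/2}m$. The main obstacle is the careful bookkeeping of logarithmic factors to reach the exact exponent $\beta+1/2$ in the denominator, since the log exponent emerging from each regime depends sensitively on the decay profile of $f_0$ (for the small-$h$ case, through the length of the level set $U$) and on how precisely the Fourier cutoff $1/\varepsilon$ is calibrated against~\eqref{e_smooth} (for the large-$h$ case). A secondary subtlety is that $L_\beta$ must be chosen large enough to accommodate every oscillating perturbation of $g$ used in the large-$h$ argument, uniformly in $\varepsilon$, which requires a uniform estimate of the $\beta$-Hölder seminorm of $\varepsilon^\beta\phi(t)\cos(t/\varepsilon)$ across scales $|t-s|\lesssim\varepsilon$ and $|t-s|\gtrsim\varepsilon$.
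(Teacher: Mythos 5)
Your plan is correct in outline and would deliver the theorem, but it follows a genuinely different route from the paper in both regimes, so it is worth comparing. For small $h$, the paper does not reuse the level-set construction of Proposition \ref{prop:small_h}: it instead takes a density that is \emph{constant} equal to $1\wedge\frac{1}{100q_1mh}$ on all of $[0,1/2]$ (possible within $\mathcal{P}_{\mathcal{H}}(\beta,L_\beta)$ once $L_\beta$ is large, which is one place the requirement $L\ge L_\beta$ enters) and runs the two-case counting argument directly on that plateau; this yields an $L_1$ bound of order $\frac{1}{mh}$ with no $\log^2(mh)$ loss, valid under the indicator $h\lesssim 1/\log m$ forced by the exponential quantile $q_2\asymp\log(1/\gamma)$. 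For large $h$, the paper does not generalize Lemma \ref{lem:large_h} in $L_2$; it uses $f_\epsilon=\phi\,(c_\epsilon+\epsilon^\beta\sin(t/\epsilon))$ with an \emph{even} bump $\phi$ and lower-bounds $\|f_\epsilon-\hat f_X\|_1$ by the single Fourier coefficient at frequency $1/\epsilon$, with $\epsilon=\frac{b(h\wedge1)}{2\log m}$; the $h\wedge 1$ and the $\log m$ (rather than your $\log(1/h)$) are what handle the $h\gtrsim1$ edge case that your choice $\epsilon=c_kh/\log(1/h)$ does not cover as written (the paper additionally treats $h\ge\Delta$ separately by showing $\hat f_X\le 3$ pointwise and testing against a plateau of height $4$). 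The paper then balances entirely in $L_1$ and pays the $\log^{1/2}$ exactly once at the end, via Cauchy--Schwarz on a window of length $O(\log m)$ outside of which the KDE has negligible mass; that is where the exponent $\beta+\tfrac12$ comes from. Your route stays in $L_2$ throughout and, if you carry the bookkeeping honestly, the crossing of $\frac{1}{mh\log(mh)}$ with $(h/\log(1/h))^\beta$ gives $m^{-\beta/(1+\beta)}(\log m)^{-2\beta/(1+\beta)}$, not $(\log m)^{-(\beta+1/2)}$; since $\frac{2\beta}{1+\beta}\le\beta+\tfrac12$ for all $\beta>0$ this is a (marginally) stronger bound and still implies the statement, but you should not present it as "reaching the exponent $\beta+\tfrac12$." Two details to patch when executing: the test densities you borrow ($f_0$ from Proposition \ref{prop:small_h} and $g\propto 1-t^2$) do not vanish at $\pm1/2$ and hence are not H\"older on $\mathbb{R}$ as required by the definition of $\mathcal{P}_{\mathcal{H}}(\beta,L)$ for $\beta$ beyond the boundary regularity --- replace them with genuinely smooth compactly supported densities with the same interior profile; and verify that the admissible range of the small-$h$ bound (roughly $h\lesssim\log^{-3}m$, coming from $u\ge 8q_2h$) overlaps the range where the large-$h$ bound is already super-polynomial in $\log m$, so that every $h$ is covered.
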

\begin{proof}
We follow a similar strategy to the proof of Theorem \ref{thm:kde_lbd}
%5 
by handling the cases of small and large bandwidth separately. 

Let $q_1=q_1(k)>0$ be the minimum number such that $\int_{|t|>q_1}k(t)dt\le0.1$.
By the assumption in the theorem, there exists $a>0$ such that 
\begin{align*}
\int_{|t|>s}k(t)dt
\le\frac1{a}\exp(-as),
\quad\forall s\ge 0.
\end{align*}
Note that we can set $L_{\beta}^{(1)}$ large such that for any $\delta\in [0,1]$, there exists $f\in\mathcal{P}_{\mathcal{H}}(\beta,L_{\beta}^{(1)})$ such that $f(x)=\delta$ for $x\in[0,1/2]$.
We first show that for any given $m$ and $h$, we have
\begin{align}
\inf_{S: |S| \leq  m } \, \sup_{f \in \mc{P}_{\mc{H}}(\beta, L_{\beta}^{(1)})} \mathbb{E} \norm{ \hat{f}_S^{\mathsf{unif}} - f  }_1
\ge
0.2\left(1\wedge\frac1{100q_1mh}\right)1\left\{h\le \frac{0.02a}{\log\left(\frac{mq_1}{0.001a}\vee \frac{10}{a}\right)}\wedge 1\right\}.
\label{e_step1}
\end{align}
Let $f$ be an arbitrary function in $f\in\mathcal{P}_{\mathcal{H}}(\beta,L_{\beta}^{(1)})$ such that 
\begin{align*}
f(x)=1\wedge \frac{1}{100q_1mh},
\quad \forall x\in[0,1/2].
\end{align*}
Let $T$ be the set of $i\in S$ for which $x_i\in[q_1h,1/2-q_1h]$.
%\begin{itemize}

\noindent \textit{Case 1:} $|T|\ge m\left(1\wedge \frac{1}{100q_1mh}\right)$. Since $k \geq 0$, we have
\begin{align*}
\|\hat{f}_X1_{[0,1/2]}\|_1
\ge \frac{0.9|T|}{m}
\ge 0.9\left(1\wedge \frac{1}{100q_1mh}\right).
\end{align*}
On the other hand,
\begin{align*}
\|f1_{[0,1/2]}\|_1
\le \frac1{2}\left(1\wedge \frac{1}{100q_1mh}\right),
\end{align*}
therefore,
\begin{align*}
\|(\hat{f}_X-f)1_{[0,1/2]}\|_1
\ge 
0.4\left(1\wedge \frac{1}{100q_1mh}\right).
\end{align*}

\noindent \textit{Case 2:}   $|T|< m\left(1\wedge \frac{1}{100q_1mh}\right)$. 
Define 
\begin{align*}
\gamma:=0.1
\left(1\wedge \frac{1}{100q_1mh}\right)
\end{align*}
and 
\begin{align*}
q_2:=\frac{0.02}{h}.
\end{align*}
Note that to verify \eqref{e_step1} we only need to consider the event of $h\le \frac{0.02a}{\log\left(\frac{mq_1}{0.001a}\vee \frac{10}{a}\right)}\wedge 1$,
in which case
\begin{align*}
\int_{|t|>q_2}k(t)dt
&\le \frac1{a}\exp(-aq_2)
\\
&\le
\frac1{a}\cdot 
\left(\frac{0.001a}{mq_1}\wedge 0.1a\right)
\\
&\le\frac1{a}\cdot 
\left(\frac{0.001a}{q_1mh}\wedge 0.1a\right)
\\
&=0.1(1\wedge \frac1{100q_1mh})
\\
&=\gamma.
\end{align*}
Moreover since $\gamma\le 0.1$ we see that $q_2\ge q_1$.
Now define
\begin{align*}
V:=[2hq_2,1/2-2hq_2]\setminus 
\bigcup_{j\in T}[x_j-q_1h,x_j-q_1h].
\end{align*}
Then for $j\notin T$, we have 
\begin{align*}
\int_V\frac1{h}k\left(\frac{x_j-t}{h}\right)dt
\le \gamma
\end{align*}
while for $j\in T$ we have 
\begin{align*}
\int_V\frac1{h}k\left(\frac{x_j-t}{h}\right)dt
\le 0.1.
\end{align*}
Thus,
\begin{align*}
\|\hat{f}_X1_V\|_1
\le
\frac{0.1|T|}{m}
+\gamma
\le 0.2\left(1\wedge \frac{1}{100q_1mh}\right).
\end{align*}
On the other hand, by the union bound we see that the Lebesgue measure of $V$ is at least
\begin{align*}
\frac1{2}-4q_2h-2q_1h|T|
\ge 0.5-4q_2h-0.02
\ge 0.4
\end{align*}
where we used the fact that $q_2h= 0.02$.
Then
\begin{align*}
\|f1_V\|_1\ge 0.4
\left(1\wedge \frac{1}{100q_1mh}\right)
\end{align*}
and hence
\begin{align*}
\|(\hat{f}_X-f)1_{[0,1/2]}\|_1
\ge
\|(\hat{f}_X-f)1_V\|_1
\ge 0.2\left(1\wedge \frac{1}{100q_1mh}\right).
\end{align*}
%\end{itemize}
This concludes the proof of \eqref{e_step1}.

The second step is to show that for given $m$ and $h$, we have
\begin{align}
\inf_{S: |S| \leq  m } \, \sup_{f \in \mc{P}_{\mc{H}}(\beta, L)} \mathbb{E} \norm{ \hat{f}_S^{\mathsf{unif}} - f  }_1
\geq
\frac1{4}
\left(\frac{b(h\wedge 1)}{\log m}\right)^{\beta}
-\frac1{bm^2}
\label{e_step2}
\end{align}
sufficiently large $m$ and $L$ to be determined later, and $0<b<\infty$ is such that 
\begin{align*}
\mathcal{F}[k](\omega)
\le \frac1{b}\exp(-b\omega),\quad
\forall \omega\in\mathbb{R}
\end{align*}
whose existence is guaranteed by the assumption of the theorem.
Let $\phi$ be a smooth, even, nonnegative function supported on $[-1/2,1/2]$ satisfying $\int_{[-1/2,1/2]}\phi=1$.
Define 
\begin{align*}
f_{\epsilon}(t):=\phi(t)\left(c_{\epsilon}+\epsilon^{\beta}\sin \frac{t}{\epsilon}\right)
\end{align*}
where $c_{\epsilon}>0$ is chosen so that $\int_{[-1/2,1/2]}f_{\epsilon}=1$. 
Then $\lim_{\epsilon\to0}c_{\epsilon}=1$,
and in particular $f_{\epsilon}\ge 0$ when $\epsilon<\epsilon(\phi,\beta)$ for some $\epsilon(\phi,\beta)$.
Moreover we can find $L_{\beta}^{(2)}<\infty$ such that $f_{\epsilon}\in \mathcal{P}_{\mathcal{H}}(\beta,L_{\beta}^{(2)})$ for all $\epsilon<\epsilon(\phi,\beta)$.
Now
\begin{align}
\|f_{\epsilon} -\hat{f}_X \|_1
&\ge
|\mathcal{F}[f_{\epsilon}](1/\epsilon)-\mathcal{F}[\hat{f}_X](1/\epsilon)
|
\nonumber \\
&\ge 
\left|\int_{[-1/2,1/2]}f_{\epsilon}(t)e^{-it/\epsilon}dt\right|-\left|\mathcal{F}[k](\frac{h}{\epsilon})\right|
\nonumber \\
&\ge \left|\int_{[-1/2,1/2]}f_{\epsilon}(t)\sin\frac{t}{\epsilon}dt\right|
-
\left|\mathcal{F}[k](\frac{h}{\epsilon})\right|
\nonumber \\
&=\epsilon^{\beta}\left|\int_{[-1/2,1/2]}
\phi(t)\sin^2\frac{t}{\epsilon} dt
\right|
-
\left|\mathcal{F}[k](\frac{h}{\epsilon})\right|
\label{e_even}
\end{align}
where \eqref{e_even} used the fact that $\phi$ is even.
Since $\lim_{\epsilon\to0}\int_{[-1/2,1/2]}
\phi(t)\sin^2\frac{t}{\epsilon} dt=\frac{1}{2}$,
there exists $\epsilon'(\phi)$ such that 
\begin{align*}
\int_{[-1/2,1/2]}
\phi(t)\sin^2\frac{t}{\epsilon} dt
\ge \frac1{4}
\end{align*} 
for any $\epsilon\le \epsilon'(\phi)$.
Now define
\begin{align*}
\epsilon''(h,m)=\frac{b(h\wedge 1)}{2\log m}.
\end{align*}
There exists $m(\phi,\beta,b)<\infty$ such that $\sup_{h>0}\epsilon''(h,m)<\epsilon(\phi,\beta)\wedge \epsilon'(\phi)$
whenever $m\ge m(\phi,\beta,b)$.
With the choice of $\epsilon=\epsilon''(h,m)$, we can continue lower bounding \eqref{e_even} as (for $m\ge m(\phi,\beta,b)$):
\begin{align*}
\frac1{4}
\left(\frac{b(h\wedge 1)}{\log m}\right)^{\beta}
-\frac1{bm^2}.
\end{align*}

Finally, we collect the results for step 1 and step 2.
First observe that the main term in the risk in step 1 can be simplified as 
\begin{align}
&\left(1\wedge\frac1{100q_1mh}\right)1\left\{h\le \frac{0.02a}{\log\left(\frac{mq_1}{0.001a}\vee \frac{10}{a}\right)}\wedge 1\right\}
\nonumber\\
&=\frac1{100q_1mh}\wedge1\left\{\mathcal{A}\right\}
\label{e_step1_s}
\end{align}
where $\mathcal{A}$ denotes the event in the left side of \eqref{e_step1_s}.

Thus up to multiplicative constant depending on $k$, $\beta$, we can lower bound the risk by taking the max of the risks in the two steps:
\begin{align}
\left(\frac1{mh}\wedge 1\{\mathcal{A}\}\right)\vee
\left(
\left(\frac{b(h\wedge 1)}{\log m}\right)^{\beta}
-\frac1{bm^2}
\right)
\label{e_distribute}
\end{align}
whenever $L\ge L_{\beta}:=L_{\beta}^{(1)}\vee L_{\beta}^{(2)}$.
We can use the distributive law to open up the parentheses in \eqref{e_distribute}.
By checking the $h>m^{-\frac1{\beta}}$ and $h\le m^{-\frac1{\beta}}$ cases respectively, it is easy to verify that
\begin{align*}
\frac1{mh}
\vee\left(
\left(\frac{b(h\wedge 1)}{\log m}\right)^{\beta}
-\frac1{bm^2}
\right)
=\Omega\left(\frac{m^{-\frac{\beta}{\beta+1}}}{\log^{\beta}m}\right).
\end{align*}
Next, if $\mathcal{A}$ is true, we evidently have 
\begin{align*}
 1\{\mathcal{A}\}\vee
\left(
\left(\frac{b(h\wedge 1)}{\log m}\right)^{\beta}
-\frac1{bm^2}
\right)
=1=
\Omega\left(\frac{m^{-\frac{\beta}{\beta+1}}}{\log^{\beta}m}\right).
\end{align*}
If $\mathcal{A}$ is not true, then $h> \frac{0.02a}{\log\left(\frac{mq_1}{0.001a}\vee \frac{10}{a}\right)}\wedge 1$, and we have
\begin{align*}
 1\{\mathcal{A}\}\vee
\left(
\left(\frac{b(h\wedge 1)}{\log m}\right)^{\beta}
-\frac1{bm^2}
\right)
&=\left(
\left(\frac{b(h\wedge 1)}{\log m}\right)^{\beta}
-\frac1{bm^2}
\right)
\\
&=\Omega\left(\log^{-2\beta}m\right)
\\
&=
\Omega\left(\frac{m^{-\frac{\beta}{\beta+1}}}{\log^{\beta}m}\right).
\end{align*}
In either case the risk with respect to $L_1$ is $\Omega\left(\frac{m^{-\frac{\beta}{\beta+1}}}{\log^{\beta}m}\right)$. It remains to convert this to a lower bound in $L_2$. 

We consider two cases. First note that by the fast decay condition on the Fourier transform, $k \in \mc{C}^1$. Let $B = B_k$ denote a constant such that
\begin{equation}
\label{eqn:ker_prime_bd}
\sup_{x \in [-1/2, 1/2]} \abs{ k'(x) } \leq B.  
\end{equation}
Set $\Delta = B^{1/2} \vee k(0) \vee 1 $. 

\noindent \textit{Case 1:} $h \leq \Delta$. 

Let $U = \{ |y| \geq \frac{1}{2} + c_{\beta, \Delta, a} \log m \}$, and let $U^c = \mb{R} \backslash U$. If $h \leq \Delta$, then because $X_i \in [-1/2, 1/2]$ and by the exponential decay of $k$, 
\[
\norm{\hat f_X(y) \1_{U} }_1 \leq m^{-2}  
\]
for $c_{\beta, \Delta, a}$ sufficiently large. Thus by Cauchy--Schwarz,
\begin{align*}
\norm{ (\hat f_X - f)\1_{U^c}}_2 &\geq c'_{\beta, \Delta, a} (\log m)^{-1/2} \norm{ (\hat f_X - f)\1_{U^c}}_2 \\
&= c'_{\beta, \Delta, a} (\log m)^{-1/2} \left( \norm{ (\hat f_X - f)}_1 - \norm{ (\hat f_X - f)\1_U}_1  \right) \\
&\geq c'_{\beta, \Delta, a} (\log m)^{-1/2} \left( c_{\beta, k} \left(\frac{m^{-\frac{\beta}{\beta+1}}}{\log^{\beta}m}\right) - m^{-2}  \right) \\
&= \Omega\left(  \frac{m^{-\frac{\beta}{\beta+1}}}{\log^{\beta+ \frac{1}{2}}m} \right)
\end{align*}

\noindent \textit{Case 2:} $h \geq \Delta$

In this case, $k(X_i - y)$ is nearly constant for all $i$. By \eqref{eqn:ker_prime_bd} and Taylor's theorem,
\[
\abs{ k(0) - k\left( \frac{X_i-y}{h} \right)} \leq 2B
\]
for all $y \in [-1/2, 1/2]$ and for all $i$. Hence, for all $y \in [-1/2, 1/2]$, using $h \geq \Delta$,
\[
\hat f_X(y) = \frac{1}{mh} \sum_{i = 1}^m k\left( \frac{X_i-y}{h} \right) \leq \frac{1}{h} ( k(0) + 2B ) \leq 3.
\]
For $L_\beta$ large enough, we see that for the function $f \in \hd(\beta, L_\beta)$ with $f|_{[ 0, \frac{1}{100} ]} \equiv 4$, 
\[
\norm{\hat f_X - f}_2 \geq \norm{(\hat f_X - f)\1_{[ 0, \frac{1}{100} ]}}_1 = \Omega(1). 
\]
%Similarly we have a lower bound
%\[
%\hat f_X(y) \geq \frac{1}{h} ( k(0) - c_{B, I} )
%\]
\end{proof}

\section{PROOFS FROM SECTION 5}

\subsection{Proof of Theorem \ref{thm:discrepancy_phitai}}
%7

The result is restated below. 

\begin{theorem*}
%\label{thm:discrepancy_phitai}
Let $k_s$ denote the kernel from Section \ref{sec:Cara_results}. The algorithm of \cite{PhiTai19} yields in polynomial time a subset $S$ with $\abs{S} = m = \tilde{O}( n^{\frac{\beta+d}{2\beta + d}} )$ such that the uniformly weighted coreset KDE $\hat g_S$ satisfies 
\[
\sup_{ f \in \mc{P}_{\mc{H}}(\beta, L)  } \E \norm{f - \hat g_S}_2 \leq c_{\beta, d, L}  \, n^{-\frac{\beta}{2\beta + d}} .
\]
\end{theorem*}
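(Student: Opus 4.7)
The plan is to apply the discrepancy-based halving algorithm of \cite{PhiTai19} to the scaled kernel $k_{s,h}$ with $h = n^{-1/(2\beta+d)}$, and carefully track how the resulting $L_\infty$ approximation bound depends on the bandwidth. First I would verify that $k_s$ satisfies the hypotheses of Theorem 4 of \cite{PhiTai19}: (a) it is positive definite because $\mc{F}[k_s](\omega) = \prod_{i=1}^d \psi(\omega_i) \geq 0$ by construction; (b) it is $\mc{C}^\infty$ and bounded, hence Lipschitz; (c) its super-polynomial decay $|k_s(x)| \leq c_\nu |x|^{-\nu}$ for every $\nu$ (established in the proof of Theorem \ref{thm:main_Caratheodory}) comfortably implies the rapid-decay hypothesis. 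Under the bandwidth rescaling, $k_{s,h}(x) = h^{-d} k_s(x/h)$ has $\|k_{s,h}\|_\infty = O(h^{-d})$ and Lipschitz constant $O(h^{-d-1})$, while still being positive definite.

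Next I would invoke the Gram--Schmidt walk bound from \cite{BanDadGarLov18, PhiTai19} for the kernel discrepancy of $k_{s,h}$, propagating the rescaled norm and Lipschitz constant through their analysis. Chasing these constants yields a discrepancy bound that, combined with the halving algorithm, produces for any target $\eps$ a subset $S$ of size $m = \tilde{O}_{d}(h^{-d}/\eps)$ with
\[
\Big\| \hat f(y) - \frac{1}{m}\sum_{j\in S} k_{s,h}(X_j - y)\Big\|_\infty \leq \eps,
\]
where $\hat f$ is the full KDE with bandwidth $h$. Setting $\eps = c_{\beta,d,L}\, n^{-\beta/(2\beta+d)}$ and $h = n^{-1/(2\beta+d)}$ gives $m = \tilde{O}(h^{-d} n^{\beta/(2\beta+d)}) = \tilde{O}\big(n^{(\beta+d)/(2\beta+d)}\big)$, matching the theorem's claim.

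Third, I would convert the $L_\infty$ bound to an $L_2$ bound. Because $X_i \in [-1/2,1/2]^d$ and $k_s$ has super-polynomial decay, both $\hat f$ and $\hat g_S$ are essentially supported on a bounded box: for $|y|_\infty \geq C$, one has $\hat f(y), \hat g_S(y) \leq c_\nu h^{-d}(|y|/h)^{-\nu}$, whose $L_2$ tail is $o(n^{-\beta/(2\beta+d)})$ once $\nu$ is chosen large enough. Hence
\[
\|\hat f - \hat g_S\|_2 \leq c_d\, \|\hat f - \hat g_S\|_\infty + o\big(n^{-\beta/(2\beta+d)}\big) \leq c_{\beta,d,L}\, n^{-\beta/(2\beta+d)}.
\]
Combining this with the triangle inequality and the minimax optimality $\sup_f \mathbb{E}\|f-\hat f\|_2 \leq c_{\beta,d,L}\, n^{-\beta/(2\beta+d)}$ of the full-data KDE built from $k_s$ (verified in the proof of Theorem \ref{thm:main_Caratheodory}) yields the claim.

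The main obstacle is the careful bookkeeping in step two: the bounds in \cite{PhiTai19} are stated for bounded, Lipschitz kernels without an explicit bandwidth parameter, so one must rescale to $k_{s,h}$ and verify that every polynomial dependence on $h^{-1} = n^{1/(2\beta+d)}$ that enters through the magnitude, Lipschitz constant, and RKHS norm either cancels against the $m^{-1}$ from halving or is absorbed into $\mathrm{polylog}(n)$ factors. Once this accounting is done, the remaining steps (verifying positive definiteness, the $L_\infty$-to-$L_2$ conversion via tail decay, and the final triangle inequality) are routine.
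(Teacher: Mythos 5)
Your proposal takes essentially the same route as the paper: both verify that $k_s$ (resp.~its bandwidth-scaled version) is positive semidefinite, Lipschitz, and rapidly decaying so that the Gram--Schmidt walk / halving machinery of \citet{PhiTai19} applies, track the polynomial dependence on $h^{-1}$ through the discrepancy bound and the halving step, set $\eps$ and $h = n^{-1/(2\beta+d)}$ to obtain $m = \tilde{O}(n^{(\beta+d)/(2\beta+d)})$, and finish with a triangle inequality against the minimax-optimal full-data KDE. The one cosmetic difference is that you rescale the kernel to $k_{s,h} = h^{-d}k_s(\cdot/h)$ before running the discrepancy analysis, whereas the paper applies \citet{PhiTai19} to the unnormalized $K_s(x,y) = k_s((x-y)/h)$ (which has $O(1)$ magnitude and Lipschitz constant $O(h^{-1})$), obtaining a clean $\tilde{O}(\sqrt{d\log n})$ discrepancy, and only at the very end multiplies by $h^{-d}$; these two ways of carrying the $h^{-d}$ factor through the computation are equivalent. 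You also spell out the $L_\infty$-to-$L_2$ conversion via boundedness of the relevant domain plus tail decay of $k_s$, a step the paper leaves implicit (its $\|\cdot\|_2$ is understood on the bounded cube, as stated in the proof of Proposition~\ref{prop:Caratheodory_prop}), so if anything your write-up is slightly more careful on this point.
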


\begin{proof}

Here we adapt the results in Section 2 of \citet{PhiTai19} to our setting where the bandwidth $h = n^{-1/(2\beta + d)}$ is shrinking. Using their notation, we define $K_s(x, y) = k_s\left( \frac{x - y}{h} \right)$ and study the kernel discrepancy of the kernel $K_s$. First we verify the assumptions on the kernel (bounded influence, Lipschitz, and positive semidefiniteness) needed to apply their results.

First, the kernel $K_s$ is \textit{bounded influence} \citep[see][Section 2]{PhiTai19} with constant $c_K = 2$ and $\delta = n^{-1}$, which means that
\begin{equation*}
\abs{K_s(x, y)} \leq \frac{1}{n}
\end{equation*} 
if $\abs{x - y}_\infty \geq n^2$. This follows from the fast decay of $\kappa_s$. 

Note that if $x$ and $y$ differ on a single coordinate $i$, then
\[
\abs{ k_s(x) - k_s(y) } \leq \abs{ c (x_i -  y_i) \prod_{j \neq i} \kappa_s(x_j)  } \leq c \abs{x_i - y_i}
\]
because $\abs{\kappa_s(x)} \leq \norm{\psi}_1$ for all $x$ and the function $\kappa_s$ is $c$-Lipschitz for some constant $c$. Hence by the triangle  and Cauchy--Schwarz inequalities, the function $k_s$ is Lipschitz:
\begin{align*}
\abs{ k_s(x) - k_s(y) } \leq d c_k \abs{x - y}_1 \leq d^{3/2} c_\kappa \abs{x - y}_2. 
\end{align*}

Therefore the kernel $K_s(x, y)$ is \textit{Lipschitz} \citep[see][]{PhiTai19} with constant $C_K = d^{3/2} c_\kappa h^{-1}$. Moreover, the kernel $K_s$ is \textit{positive semidefinite} because the Fourier transform of $\kappa_s$ is nonnegative. 

Given the shrinking bandwidth $h = n^{-1/(2\beta + d)}$, we slightly modify the lattice used in \citet[][Lemma 1]{PhiTai19}. Define the lattice
\[
\mc{L} = \left\{ \left. (i_1 \delta, i_2 \delta, \ldots, i_d \delta) \, \right\vert \, i_j \in \mb{Z} \right\},
\]
where
\[
\delta = \frac{1}{c_\kappa d^2 n h^{-1}}. 
\]

The calculation at the top of page 6 of \citet[][Lemma 1]{PhiTai19} yields
\begin{align*}
\disc(X, \chi, y) &:= \abs{ \sum_{i = 1}^n \chi(X_i) K_s( X_i, y)  } 
\\
&\leq \abs{ \sum_{i = 1}^n \chi(X_i) K_s( X_i, y_0)  }  + 1
\end{align*}
where $y_0$ is the closest point to $y$ in the lattice $\mc{L}$, and $\chi$ assigns either $+1$ or $-1$ to each element of $X= X_1, \ldots, X_n$. Moreover, with the bounded influence of $K_s$, if 
\[
\min_{i} \abs{y - X_i}_\infty \geq n^{2},
\] 
then 
\[
\disc(X, \chi, y) = \abs{ \sum_{i = 1}^n \chi(X_i) K_s( X_i, y)  } \leq 1.
\]
On defining
\[
\mc{L}_X = \mc{L} \cap \{ y: \, \min_i \abs{y - X_i}_\infty \leq n^2 \},
\]
we see that 
\begin{equation*}
\max_{y \in \mb{R}^d} \disc(X, \chi, y) \leq \max_{y \in \mc{L}_X} \disc(X, \chi, y) + 1
\end{equation*}
for all signings $\chi: X \to \{-1, +1\}$. This is precisely the conclusion of \citet[][Lemma 1]{PhiTai19}.

This established, the positive definiteness and bounded diagonal entries of $K_s$ and \citet[][Lemmas 2 and 3]{PhiTai19} imply that 
\[
\disc_{K_s} = O(\sqrt{d \log n}). 
\]

Given $\eps > 0$, the halving algorithm can be applied to $K_s$ as in \citet[][Corollary 5]{PhiTai19} to yield a coreset $X_S$ of size $m = O( \eps^{-1} \sqrt{ d \log \eps^{-1} })$ such that
\[
\norm{ \frac{1}{n} \sum_{j = 1}^n K_s(X_j, y) - \frac{1}{m} \sum_{j \in S} K_s(X_j, y) }_\infty \leq \eps. 
\]
Rescaling by $h^{-d}$, we have
\[
\norm{\hat f  - \unif }_\infty = \norm{ \frac{1}{n} \sum_{j = 1}^n k_s(X_j, y) - \frac{1}{m} \sum_{j \in S} k_s(X_j, y) }_\infty \leq \eps h^{-d}. 
\]

Recall from Section \ref{sec:thm2_pf} that $\hat f$ attains the minimax rate of estimation on $\hd(\beta, L)$. Thus setting $\eps = h^d n^{-\beta/(2\beta + d)}$ we get a coreset of size $\tilde{O}_d( n^{\frac{\beta + d}{2\beta + d}})$ that attains the minimax rate $c_{\beta, d, L} \, n^{-\beta/(2\beta + d)}$, as desired. Moreover, by the results of \citet{PhiTai19}, this coreset can be constructed in polynomial time. 

\end{proof}

\medskip 
 
\noindent \textbf{Acknowledgments} We thank Cole Franks for helpful discussions regarding algorithmic aspects of Carath\'{e}odory's theorem. 
    
	\bibliographystyle{plainnat}
    \bibliography{Coresets_arxiv_bib}
	
	\end{document}